\newtheorem{thm}{Theorem}
\newtheorem*{thmm}{Theorem}
\newtheorem*{cor}{Corollary}
\newtheorem*{lem}{Lemma}
\newtheorem{prop}{Proposition}
\newtheorem*{conj}{Conjecture}
\newtheorem{prob}{Problem}
\def\d{\,{\rm{d}}}
\def\o{\,{\omega}}
\def\X{\,{\mathbf{X}}}
\newcommand{\m}{\mathbf}
\title[Eigenvalues and trace formulas]
{Transfer operator for the Gauss' continued fraction map.\\ I. Structure of the eigenvalues and trace formulas}
\author[G. Alkauskas]{Giedrius Alkauskas}
\address{Vilnius University, Department of Mathematics and Informatics, Naugarduko 24, LT-03225 Vilnius, Lithuania}
\email{giedrius.alkauskas@mif.vu.lt}
\begin{document}
\begin{abstract} Let $\mathcal{L}$ be the transfer operator associated with the Gauss' continued fraction map, known also as the \emph{Gauss-Kuzmin-Wirsing operator}, acting on a Banach space. In this work we prove an asymptotic formula for the eigenvalues of $\mathcal{L}$. This settles, in a stronger form, the conjectures of D. Mayer and G. Roepstorff (1988), A. J. MacLeod (1992), Ph. Flajolet and B. Vall\'{e}e (1995), also supported by several other authors. Further, we find an exact series for the eigenvalues, which also gives the canonical decomposition of trace formulas due to D. Mayer (1976) and K. I. Babenko (1978). This crystallizes the contribution of each individual eigenvalue in the trace formulas.
\end{abstract}

\pagestyle{fancy}
\fancyhead{}
\fancyhead[LE]{{\sc Eigenvalues and trace formulas}}
\fancyhead[RO]{{\sc G. Alkauskas}}
\fancyhead[CE,CO]{\thepage}
\fancyfoot{}

\date{First version: October 15, 2012. Last update: January 20, 2018. First version for submission: March 31, 2018}
\subjclass[2010]{Primary: 11A55, 47A10; Secondary:  11F72, 11M36}
\keywords{Continued fractions, Gauss' continued fraction map, transfer operator, Gauss-Kuzmin-Wirsing operator, Mayer-Ruelle operator, Selberg zeta function, Fredholm determinant, structural constants, Banach spaces, eigenvalues, trace formulas, Jacobi polynomials, Fibonacci numbers, periods, extended ring of periods}
\thanks{The research of the author was supported by the Research Council of Lithuania grant No. MIP-072/2015}

\maketitle
\setcounter{tocdepth}{1} \tableofcontents
\section{The operator and the conjectures}
\label{sec1}
Our first attack of this problem, based on the techniques developed in \cite{alkauskas}, is presented in \cite{alkauskas1}. Though the obtained result is highly supported by numerical computations, the series for the eigenvalues in \cite{alkauskas1} is too complex and gives neither the structure, nor asymptotics. Moreover, the latter technique involves complex dynamics rather explicitly, and therefore this gives even more additional complications. \\

Here we present another approach which does answer these questions and does not use complex dynamics. In the second part of this paper \cite{antras} we delve deeper into the nature of the functions $W_{\ell}(\X)$ and investigate their refined arithmetic structure, and in the third part \cite{trecias} we explore this problem for \emph{the Mayer-Ruelle operator} for any complex $s$, not just $s=1$ which corresponds to the current paper (see Subsection \ref{ruelle}).
\subsection{Introduction}
\label{intro-pirm}
Let $\mathbb{D}$ be the disc $\{z\in\mathbb{C}:|z-1|<\frac{3}{2}\}$. Let $\mathbf{V}$ be the Banach space of functions which are analytic in $\mathbb{D}$ and are continuous in its closure, with the supremum norm. \emph{The Perron-Frobenius}, or \emph{the transfer operator for the Gauss' continued fraction map}, also called \emph{the Gauss-Kuzmin-Wirsing operator}, is defined for functions $f\in\mathbf{V}$ by \cite{hensley1,khinchin,knuth,mayer2,wirsing}
\begin{eqnarray}
\mathcal{L}[f(t)](z)=\sum\limits_{m=1}^{\infty}\frac{1}{(z+m)^2}f\Big{(}\frac{1}{z+m}\Big{)}.\label{gkw}
\end{eqnarray}
Our chief interest is the point spectrum of this operator. The operator
\begin{eqnarray*}
\mathcal{M}[f(t)](z)=\sum\limits_{m=1}^{\infty}\frac{z+1}{(z+m)(z+m+1)}f\Big{(}\frac{1}{z+m}\Big{)},
\end{eqnarray*}
considered on $\m{V}$, has the same point spectrum, counting with algebraic multiplicities. It is essential in the study of transfer operators for the Gauss map \cite{mayer1, mayer2, iosif2}. However, we will not use it.\\ 

As was shown in \cite{babenko,babenko2}, the operator $\mathcal{L}$ is then of trace class and is nuclear of order $0$. Thus, it possesses the eigenvalues $\lambda_{n}$, $n\in\mathbb{N}$, which are real numbers, $|\lambda_{n}|\geq |\lambda_{n+1}|$, $\lambda_{1}=1$, and $\sum_{n=1}^{\infty}|\lambda_{n}|^{\epsilon}<+\infty$ for every $\epsilon>0$. In fact, the Hilbert-Schmidt operator $\mathcal{K}$, defined by
\begin{eqnarray*}
\mathcal{K}[u(t)](x)=\int\limits_{0}^{\infty}\frac{J_{1}(2\sqrt{xy})}{\sqrt{(e^{x}-1)(e^{y}-1)}} u(y)\d y,
\end{eqnarray*}
for $u$ belonging to the Hilbert space $L^2(\mathbb{R}_{+},m)$, $\d m(y)=\frac{y}{e^{y}-1}\d y$, has the same point spectrum, counting algebraic multiplicities \cite{babenko,mayer1,mayer2}; here 
$J_{1}(\star)$ is the Bessel function with an index $1$. Numerically, eigenvalues are presented in the Table \ref{table-eigen}.

\begin{table}[h]
\begin{tabular}{|| r  c |r c| r c ||}
\hline
$n$& $(-1)^{n+1}\lambda_{n}$ & $n$ &$(-1)^{n+1}\lambda_{n}$& $n$ & $(-1)^{n+1}\lambda_{n}\cdot10^{6}$\\
\hline
$1$  & $1.0000000000000000$ &    $9$ &  $0.0002441314655245158$&    $17$ & $0.1010905532214992$ \\
$2$  & $0.3036630028987326$ &    $10$ & $0.00009168908376859330$&  $18$ & $0.03834969795026564$ \\
$3$  & $0.1008845092931040$ &    $11$ & $0.00003451654616385425$&  $19$ & $0.01455613838668023$ \\
$4$  & $0.03549615902165984$ &   $12$&  $0.00001301769787702303$&  $20$ & $0.005527567937997608$ \\
$5$  & $0.01284379036244026$ &   $13$&  $0.000004916782302464491$& $21$ & $0.002099913582972687$ \\
$6$  & $0.004717777511571031$ &  $14$&  $0.000001859307351509042$& $22$ & $0.007980457682720196$ \\
$7$  & $0.001748675124305511$ &  $15$&  $0.0000007038113430870398$&$23$ & $0.0003033862949098575$ \\
$8$  & $0.0006520208583205029$ & $16$&  $0.0000002666413434479564$&$24$ & $0.0001153695418144668$\\
\hline
\end{tabular}
\newline
\caption{The eigenvalues $\lambda_{n}$ for $1\leq n\leq 24$, with $16$ significant digits.}
\label{table-eigen}
\end{table}
The transfer operator $\mathcal{L}$ arises from the Gauss map $F(x)=\{1/x\}$, $x\in(0,1]$, $F(0)=0$, in the following way: for functions $f\in L^{1}[0,1]$ it is defined by
\begin{eqnarray*}
\mathcal{L}[f(t)](z)=\sum\limits_{y\in F^{-1}(z)}
\frac{1}{|F'(y)|}\cdot f(y),
\end{eqnarray*}
which agrees with (\ref{gkw}). Let further $F^{(1)}=F$, and $F^{(k)}=F\circ F^{(k-1)}$ for $k\geq 2$. As is now well-known, due to important contributions by C. F. Gauss, R. O. Kuzmin, P. L\'{e}vy, E. Wirsing, K. I. Babenko, K. I. Babenko and S. P. Jur'ev, D. Mayer, we have
\begin{eqnarray}
\mu(a\in[0,1]:F^{(k)}(a)<z)=\frac{\log(1+z)}{\log2}+\sum\limits_{n=2}^{\infty}\lambda^{k}_{n}\Phi_{n}(z).
\label{canon}
\end{eqnarray}
Here $\mu(\star)$ stands for the Lebesgue measure, and for each $n\geq 2$, the eigenfunction $\Phi_{n}(z)$ is defined in the cut plane $\mathbb{C}\setminus(-\infty,-1]$, it satisfies the conditions $\Phi_{n}(0)=\Phi_{n}(1)=0$, the regularity condition
\begin{eqnarray}
\sup_{\Re(z)\geq-\frac{1}{2}}|(z+1)U(z)|<+\infty.\label{regular}
\end{eqnarray}
where $U(z)=\Phi'_{n}(z)$, and the functional equation
\begin{eqnarray*}
\Phi_{n}(z+1)-\Phi_{n}(z)=\frac{1}{\lambda_{n}}\cdot\Phi_{n}\Big{(}\frac{1}{z+1}\Big{)}.
\end{eqnarray*}
Thus, $\Phi_{1}(z)=\frac{\log(1+z)}{\log 2}$, $\lambda_{1}=1$. The eigenfunctions of $\mathcal{L}$ are then given by $\Phi'_{n}(z)$, $n\in\mathbb{N}$. 
Moreover, every function $U(z)$ which satisfies, for a certain $\lambda\in\mathbb{R}\setminus\{0\}$, the functional equation 
\begin{eqnarray}
U(z)=U(z+1)+\frac{1}{\lambda(z+1)^2}U\Big{(}\frac{1}{z+1}\Big{)},\quad z\in\mathbb{C}\setminus(-\infty,-1],\label{tikr}
\end{eqnarray}
and the regularity property (\ref{regular}), is the eigenfunction of $\mathcal{L}$ with the eigenvalue $\lambda$. More details can be found in \cite{babenko,knuth,wirsing}. \\

The nature of the eigenvalues $\lambda_{n}$ for $n\geq 2$ is unknown. It is widely believed that these constants are unrelated to other most important constants in mathematics. In particular, it is expected that they are neither algebraic numbers nor periods \cite{konzag}. We remind that \emph{a period} is a value of an absolutely convergent integral of a rational function with algebraic coefficients, over a domain in $\mathbb{R}^{n}$ given by polynomial inequalities with algebraic coefficients. All periods form a ring $\mathcal{P}$. However, we will soon see in the item (ii) of Theorem \ref{thm2} that these eigenvalues are deeply related to an \emph{extended ring of periods} $\mathcal{P}[\frac{1}{\pi}]$ in a direct way.
\subsection{Trace formulas} Now, more that $480$ digits of $\lambda_{2}$ have been calculated \cite{briggs}, but one can get rigorous certificates only for the several first few digits of $\lambda_{2}$ and $\lambda_{3}$ \cite{flajolet1,knuth,macleod,mayer1,zagier1}. On the other hand, the trace of the operator $\mathcal{L}$ can be given explicitly. As was shown in \cite{mayer} (see also \cite{flajolet2,finch,zagier2,mayer1,mayer2}), we have
\begin{eqnarray}
\mathrm{Tr}(\mathcal{L})&=&\sum\limits_{n=1}^{\infty}\lambda_{n}=\int\limits_{0}^{\infty}\frac{J_{1}(2x)}{e^{x}-1}\d x
=\sum\limits_{\ell=1}^{\infty}\frac{1}{\xi_{\ell}^{-2}+1}\\
&=&\frac{1}{2}-\frac{1}{2\sqrt{5}}+\frac{1}{2}\sum\limits_{k=1}^{\infty}(-1)^{k-1}\binom{2k}{k}\big{(}\zeta(2k)-1\big{)}
\nonumber
\end{eqnarray}
\begin{eqnarray}
&=&1-\frac{1}{2\sqrt{2}}-\frac{1}{2\sqrt{5}}+\frac{1}{2}\sum\limits_{k=1}^{\infty}(-1)^{k-1}\binom{2k}{k}\Big{(}\zeta(2k)-1-\frac{1}{2^{2k}}\Big{)},
\nonumber\\
\mathrm{Tr}(\mathcal{L}^2)&=&\sum\limits_{n=1}^{\infty}\lambda^{2}_{n}=\int\limits_{0}^{\infty}\int\limits_{0}^{\infty}
\frac{J_{1}(2\sqrt{xy})^2}{(e^{x}-1)(e^{y}-1)}\d x\d y
=\sum\limits_{i,j=1}^{\infty}\frac{1}{(\xi_{i,j}\xi_{j,i})^{-2}-1}.
\label{square}
\end{eqnarray}
Here
\begin{eqnarray*}
\xi_{\ell}&=&\frac{1}{\ell+}\frac{1}{\ell+}\frac{1}{\ell+}\ldots=\frac{\sqrt{\ell^2+4}-\ell}{2},
\quad\xi_{\ell}^{-2}+1=\frac{\ell^2+4+\ell\sqrt{\ell^2+4}}{2},\quad \ell\in\mathbb{N},\\ 
\xi_{i,j}&=&\frac{1}{i+}\frac{1}{j+}\frac{1}{i+}\frac{1}{j+}\ldots,\quad
(\xi_{i,j}\cdot\xi_{j,i})^{-2}-1=\frac{D}{2}+\frac{ij+2}{2}\sqrt{D},\quad D=(ij+2)^2-4,\quad i,j\in\mathbb{N},
\end{eqnarray*}
are quadratic irrationals whose continued fraction expansion are strictly $1$ and $2-$periodic. That is, they are all fixed points of $F$ and $F^2$, respectively. The formula for $\mathrm{Tr}(\mathcal{L})$ which involves zeta values was derived in \cite{flajolet2,flajolet1}. In Subsection \ref{interpol0} we will derive similar formula for $\mathrm{Tr}(\mathcal{L}^{2})$, and this is very useful computationally.
\begin{prop}
\label{prop-2t}
We have an identity:
\begin{eqnarray*}
\mathrm{Tr}(\mathcal{L}^2)&=&
\frac{2}{5+3\sqrt{5}}+\frac{1}{3+2\sqrt{3}}+\frac{4}{21+5\sqrt{21}}+\frac{3}{16+12\sqrt{2}}\\
&+&\sum\limits_{k=2}^{\infty}(-1)^{k}\binom{2k-2}{k-2}
\Big{(}\zeta^{2}(k)-1-\frac{2}{2^{k}}-\frac{2}{3^{k}}-\frac{3}{4^{k}}\Big{)}\\
&=&1.103839653617_{+}.
\end{eqnarray*}
The $k$th term of this series is asymptotically equal to $\frac{1}{2}(\frac{4}{5})^k(\pi k)^{-1/2}$.
\end{prop}
We have many formulas to calculate zeta values with very high accuracy, and this gives a fast method to calculate $\mathrm{Tr}(\mathcal{L}^{2})$. As is clear from  \cite{flajolet1}, the authors of that paper had already derived this formula, though this was not written down explicitly. The importance of this formula is apparent in numerical calculations, when one extracts approximations to the eigenvalues $\lambda_{n}$ as eigenvalues of a certain high-order matrix; usually about the order 300-400. While increasing this order some eigenvalues do stabilize, thus it can be guessed that they correspond to some real $\lambda_{n}$, while others (``spurious" ones) tend to disappear. The invalidation of the formula for $\mathrm{Tr}(\mathcal{L}^{2})$ also shows that these ``spurious" eigenvalues should be ruled out - this fact was noticed by all mathematicians who tried numerically to operate with similar matrices.\\  

In general, for $a_{i}\in\mathbb{N}$, $1\leq i\leq k$, we put
\begin{eqnarray*}
\xi_{a_{1},a_{2},\ldots, a_{k}}=[0,\overline{a_{1},a_{2},\ldots,a_{k}}\,].
\end{eqnarray*}
This number is a fixed point of $F^{(k)}$. Then we have the fundamental result \cite{mayer2}:
\begin{eqnarray}
\sum\limits_{n=1}^{\infty}\lambda_{n}^{k}=\mathrm{Tr}(\mathcal{L}^{k})
=\sum\limits_{i_{1},i_{2},\ldots,i_{k}=1}^{\infty}
\Big{[}\prod\limits_{s=1}^{k}
\xi_{i_{s},i_{s+1},\ldots, i_{k},i_{1},\ldots,i_{s-1}}^{-2}-(-1)^{k}
\Big{]}^{-1}.\label{mayer-0}
\end{eqnarray}

We will see in Subsection \ref{interpol2} that for the number in the square brackets, denoted there by $\varrho$, we have a very simple formula. Indeed, let $p=\xi_{i_{1},i_{2},\ldots,i_{k}}$, and
\begin{eqnarray*}
\frac{ap+b}{cp+d}=p,\text{where }a,b,c,d\in\mathbb{N}_{0},\quad 
\left(\begin{array}{cc}a & b \\c & d \\ \end{array}\right)=\left(\begin{array}{cc}P_{k-1} & P_{k} \\Q_{k-1} & Q_{k} \\ \end{array}\right),
\end{eqnarray*}
where $\frac{P_{s}}{Q_{s}}$ is the $s$th convergent to $p$, $\frac{P_{1}}{Q_{1}}=\frac{1}{i_{1}}$. Then
\begin{eqnarray*}
\prod\limits_{s=1}^{k}
\xi_{i_{s},i_{s+1},\ldots, i_{k},i_{1},\ldots,i_{s-1}}^{-2}-(-1)^{k}=\frac{D}{2}+\frac{a+d}{2}\sqrt{D},\quad D=(a+d)^2-4(-1)^{k}.
\end{eqnarray*}
This, of course, does not depend on the cyclic permutation of $\{i_{1},i_{2},\ldots,i_{k}\}$, since the matrix $\left(\begin{array}{cc}a & b \\c & d \\ \end{array}\right)$
is then replaced by a similar matrix. Indeed,
\begin{eqnarray*}
\left(\begin{array}{cc}P_{k-1} & P_{k} \\Q_{k-1} & Q_{k} \\ \end{array}\right)=
\left(\begin{array}{cc}0 & 1 \\1 & i_{1} \\ \end{array}\right)\cdot 
\left(\begin{array}{cc}0 & 1 \\1 & i_{2} \\ \end{array}\right)\cdots
\left(\begin{array}{cc}0 & 1 \\1 & i_{k} \\ \end{array}\right),
\end{eqnarray*}
and the fact becomes obvious.\\

 Without going into detail, we note that these trace formulas and this field is deeply and intricately related to the Selberg zeta function, the Riemann zeta function, Maass wave forms and modular forms for the full modular group \cite{zagier2,zagier3,mayer3,zagier1} - see Subsection \ref{ruelle}.

\subsection{Previous conjectures }Throughout this paper, we fix the notation
\begin{eqnarray*}
\phi=\frac{1+\sqrt{5}}{2}.
\end{eqnarray*}
The constants $\lambda_{n}$ have received a considerable amount of attention in recent decades. Nevertheless, there remained three outstanding unresolved problems. No theoretical progress was made towards any of them, only a computational one. As was said before, we henceforth arrange the eigenvalues according to their absolute value:
\begin{eqnarray*}
|\lambda_{1}|\geq|\lambda_{2}|\geq|\lambda_{3}|\geq\cdots. 
\end{eqnarray*} 
 Of course, in case $\lambda_{n}=\pm\lambda_{n+1}$ for some $n$, this arrangement is not uniquely defined. Despite this, we have
\begin{conj}The following three statements are true:
\begin{itemize}
\item[i)]{{\rm Simplicity}.} The eigenvalues are simple, $|\lambda_{n}|$ strictly decreases.
\item[ii)]{{\rm Sign}.} The eigenvalues have alternating sign: $(-1)^{n+1}\lambda_{n}>0$.
\item[iii)]{{\rm Ratio}.} There exists $\lim\limits_{n\rightarrow\infty}\frac{\lambda_{n}}{\lambda_{n+1}}=-\frac{3+\sqrt{5}}{2}=-\phi^{2}$.
\end{itemize}
\end{conj}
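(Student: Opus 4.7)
The strategy is to pin down the branch of $\mathcal{L}$ responsible for the asymptotic behaviour. Split $\mathcal{L}=\mathcal{L}_{1}+\mathcal{L}'$ with
\[
\mathcal{L}_{1}[f](z)=\frac{1}{(z+1)^{2}}f\!\left(\frac{1}{z+1}\right),\qquad \mathcal{L}'[f](z)=\sum_{m\geq 2}\frac{1}{(z+m)^{2}}f\!\left(\frac{1}{z+m}\right).
\]
The map $\psi_{1}(z)=1/(z+1)$ has the golden section $z_{0}=1/\phi\in\mathbb{D}$ as attracting fixed point, with multiplier $\psi_{1}'(z_{0})=-\phi^{-2}$ and weight $1/(z_{0}+1)^{2}=\phi^{-2}$. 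A standard Koenigs linearisation at $z_{0}$ then shows that the point spectrum of the single-branch weighted composition $\mathcal{L}_{1}$ on $\mathbf{V}$ is exactly $\{(-1)^{k}\phi^{-2(k+1)}:k\geq 0\}$, so the three parts of the conjecture hold in sharp form for the spectrum of $\mathcal{L}_{1}$. The whole point of the proof is that the remaining branches shift the eigenvalues but do not disrupt this asymptotic pattern.

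Iterating the functional equation (\ref{tikr}) and organising the resulting sum by continued-fraction words should, after a combinatorial rearrangement, produce an exact series
\[
\lambda_{n}=\sum_{\ell=1}^{\infty}\mathscr{C}_{\ell}(n),
\]
where $\mathscr{C}_{\ell}(n)$ is the contribution of the periodic orbit $\xi_{\ell}=[\ell;\ell,\ell,\ldots]$ and is assembled from the auxiliary functions $W_{\ell}(\X)$ announced in the abstract. For $\ell\geq 2$ one has $\xi_{\ell}=(-\ell+\sqrt{\ell^{2}+4})/2$ with $\xi_{\ell}^{2}<\phi^{-2}$, so the multipliers $-\xi_{\ell}^{2}$ of the higher branches are strictly dominated by that of the golden one. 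I would then prove the asymptotics $\mathscr{C}_{1}(n)=A(-\phi^{-2})^{n}(1+o(1))$ with $A\neq 0$, together with tail bounds $\mathscr{C}_{\ell}(n)=O(\xi_{\ell}^{2n})$ uniform in $\ell\geq 2$.

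Given these bounds the three parts of the conjecture fall out at once. The ratio statement (iii) comes from $\lambda_{n+1}/\lambda_{n}=-\phi^{-2}+O((\phi\xi_{2})^{2n})$; alternation (ii) is the sign of $(-\phi^{-2})^{n}$ combined with the reality of $A$; strict monotonicity of $(|\lambda_{n}|)$ and simplicity (i) are automatic from (iii) for $n$ past an explicit threshold, while for the finitely many smaller indices one combines the rigorous certificates known for $\lambda_{2}$ and $\lambda_{3}$ with the trace identities recorded in the excerpt to exclude coincidences.

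The main obstacle is the analytic control of the $W_{\ell}$: one must establish convergence of the formal iteration of (\ref{tikr}) in a sufficiently large domain, legitimise the rearrangement over continued-fraction words, and --- most delicately --- prove that the leading coefficient $A$ does not vanish. Since $\mathcal{L}'$ is not small in operator norm, elementary analytic perturbation theory of isolated eigenvalues does not apply; instead I would work with the Fredholm determinant $\det(I-\mu\mathcal{L})$, read off the location of its zeros from the exact series, and derive $A\neq 0$ from a positivity or arithmetic property of $W_{1}$ at the golden fixed point. This refined study of the $W_{\ell}$, promised for the second part of the paper, is what I expect to be the load-bearing ingredient.
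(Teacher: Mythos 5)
Your overall strategy --- isolate the golden branch, obtain an exact series for $\lambda_{n}$ indexed by the periodic orbits $\xi_{\ell}$, and read off the three claims from the dominance of $\ell=1$ --- is the same in spirit as the paper's. The paper effectively linearizes at the golden fixed point too, by passing to Möbius coordinates $w=(z-\phi^{-1})/(z+\phi)$ (the ``$g$-coefficients''), and the functions $W_{\ell}$ it builds satisfy exactly the trace identity $\sum_{n}(-1)^{n+1}\phi^{-2n}W_{\ell-1}(n)=(\xi_{\ell}^{-2}+1)^{-1}$ that ties $W_{\ell-1}$ to the orbit $\xi_{\ell}$, matching your $\mathscr{C}_{\ell}$. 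It also addresses your ``legitimise the rearrangement / no sporadic eigenvalues'' worry by deforming to $\mathcal{L}_{\o}$, using analyticity in $\o$ and the explicit traces $\mathrm{Tr}(\mathcal{L}_{\o}^{k})$; your Fredholm-determinant remark is compatible with that but is not developed.

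There is, however, a genuine and consequential error in your quantitative picture. You assert the tail bound $\mathscr{C}_{\ell}(n)=O(\xi_{\ell}^{2n})$ for $\ell\geq 2$, which would make the total correction to $\phi^{-2n}$ of size $O(\xi_{2}^{2n})$, i.e.\ \emph{exponentially} smaller than the leading term (since $\xi_{2}^{2}\approx 0.172<\phi^{-2}\approx 0.382$), giving $\lambda_{n+1}/\lambda_{n}=-\phi^{-2}+O((\phi\xi_{2})^{2n})$. This is false. The paper's Theorem~\ref{thm1} shows
\begin{equation*}
(-1)^{n+1}\lambda_{n}=\phi^{-2n}+c(n)\,\frac{\phi^{-2n}}{\sqrt{n}},\qquad 0.4<c(n)<1.7,
\end{equation*}
so the correction is only \emph{polynomially} smaller; correspondingly $W_{\ell}(n)$ for $\ell\geq 1$ contributes $O\bigl(\phi^{-2n}/(\ell^{2}\sqrt{n})\bigr)$, not $O(\xi_{\ell+1}^{2n})$. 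In other words, the exponential rate in $n$ stays $\phi^{-2n}$ for \emph{every} orbit; the $\xi_{\ell}$ factor controls decay in $\ell$ (through the column sums in the trace identity), not decay in $n$. The source of the $1/\sqrt{n}$ is the asymptotics of the Jacobi polynomial $P^{(0,1)}_{n-1}(3/2)$ hiding in $W_{1}$ --- that is where the real analysis lives, and your estimate skips it. Consequently your derivation of strict monotonicity and simplicity ``for $n$ past an explicit threshold'' would need to be reworked with the much weaker $O(1/\sqrt{n})$ rate, and your appeal to rigorous certificates for $\lambda_{2},\lambda_{3}$ to settle the small cases is not how the paper proceeds (the paper's two-sided bound $0.4<c(n)<1.7$ is already enough, uniformly in $n\geq 1$, to give the sign and strict decrease of $|\lambda_{n}|$). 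Finally, proving $A\neq 0$ is genuinely trivial in the paper's normalization: the leading term is $W_{0}\equiv 1$, so there is no vanishing issue to resolve.
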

The first conjecture conjecture can be attributed to Babenko \cite{babenko} and Mayer and Roepstorff \cite{mayer2}, the second - to Mayer and Roepstorff too, also reiterated by MacLeod \cite{macleod}. The last was raised by MacLeod (only with a constant $\approx -2.6$), and seconded by Flajolet and Vall\'{e}e \cite{flajolet2,flajolet1, flajolet3, flajolet4}, with the constant $-\phi^2$. Several other authors also claimed to believe these conjectures, or state that ``for the time being [2014], no way for obtaining [formula for the eigenvalues] can be expected" \cite{iosif}. \\

The ratio conjecture has the following explanation. The spectrum of the operator
\begin{eqnarray*}
\mathcal{L}_{0}[f(t)](z)=\frac{1}{(z+1)^{2}}f\Big{(}\frac{1}{z+1}\Big{)},\quad f\in\mathbf{V},
\end{eqnarray*}
is given by $\lambda^{0}_{n}(-1)^{n+1}\phi^{-2n}$, $n\in\mathbb{N}$, with the corresponding eigenfunction being
\begin{eqnarray}
u_{0}(n,z)=\frac{(z-\phi^{-1})^{n-1}}{(z+\phi)^{n+1}}
\label{u-init}.
\end{eqnarray}
(see Section \ref{pirmas}). It is expected that the terms in (\ref{gkw}) for $m\geq 2$ act only as small perturbations to $\mathcal{L}_{0}$. Of course, the ``Sign" and ``Simplicity" conjectures follow from the ``Ratio" conjecture for sufficiently large $n$, provided it is effective and we can verify these conjectures for the first ititial values of $n$. 
\section{Main results}
\subsection{Formulation} It is surprising that in fact the real asymptotics of the sequence $\lambda_{n}$, minding the above remark about $\mathcal{L}_{0}$, is the simplest imaginable from the ratio conjecture. 
\begin{thm}[Asymptotics]
We have the formula
\begin{eqnarray*}
(-1)^{n+1}\lambda_{n}&=&\phi^{-2n}
+C\cdot\frac{\phi^{-2n}}{\sqrt{n}}+d(n)\cdot\frac{\phi^{-2n}}{n},\\
\text{where the constant }C&=&\frac{\sqrt[4]{5}\cdot\zeta(3/2)}{2\sqrt{\pi}}=1.1019785625880999_{+};
\end{eqnarray*}
here $\zeta(\star)$ is the Riemann zeta function, and the function $d(n)$ is bounded.
\label{thm1}
\end{thm}
Based on high precision numerical computations of P. Sebah \cite{sebah}, we calculated the function $d(n)$ in Table \ref{table-sebah}. Since $\frac{\phi^{-300}}{150}\approx 0.13\cdot 10^{-64}$, this table overwhelmingly convinces in the validity of Theorem \ref{thm1}. Indeed, suppose we change the constant $``2"$ in the denominator of the fraction which defines $C$ with $1.9$ and $2.1$, respectively. The values of the so obtained functions $d_{1.9}(n)$ and $d_{2.1}(n)$ for $n=148,149$, and $150$ would then be, respectively,
\begin{eqnarray*}
-0.346715,\quad -0.349104,\quad -0.351485;\text{ and }
0.997259,\quad 0.999403,\quad 1.001539.
\end{eqnarray*}
In the first case, the value for $d_{1.9}(n)$ decreases by approx. $6.8\permil$ (per mille) at each step, and in the second $d_{2.1}(n)$ increases by approx. $2.1\permil$. Whereas for the correct constant $2$, we see the decrease of $d(n)=d_{2.0}(n)$ by only $0.025\permil$ at each step.  
\begin{table}[h] 
\begin{tabular}{|| r  c |r c| r c |r  c ||}
\hline
$n$& $d(n)$ & $n$& $d(n)$& $n$& $d(n)$ & $n$& $d(n)$\\
\hline
$1$ &  $0.51605$ &  $5$ & $0.43430$ &  $40$  & $0.36268$ & $130$ & $0.359061$ \\ 
$2$ &  $0.60424$ & $10$ & $0.38504$ &  $50$  & $0.36159$ & $148$ & $0.358871$ \\ 
$3$ &  $0.52221$ & $20$ & $0.36884$ &  $70$  & $0.36040$ & $149$ & $0.358862$ \\ 
$4$ &  $0.46629$ & $30$ & $0.36460$ & $100$  & $0.35952$ & $150$ & $0.358852$ \\ 
\hline
\end{tabular}
\newline
\caption{The function $d(n)$.}
\label{table-sebah}
\end{table}

Theorem \ref{thm1} is much stronger than the ratio conjecture. Seemingly, the apparent distance from $\lambda_{1}\phi^{2}=2.61803_{+}$ to $1$ prevented other mathematicians from posing a much stronger conjecture on the asymptotics. Nevertheless, for example, we have: $|\lambda_{150}|\cdot\phi^{300}=1.0923_{+}$.\\

Our second result gives the much more refined structure of the eigenvalues. Let $P_{m}^{(\alpha,\beta)}(x)$ stand for the classical Jacobi polynomials \cite{szego}; see Subsection \ref{jac-poli} for more details.
\begin{thm}[Arithmetic and decomposition of trace formulas]The following holds.\\
(i) There exist functions $W_{v}(\X)$, $v\geq 0$, defined by $W_{0}(\X)=1$, $W_{1}(\X)=
\frac{5}{4}\cdot\phi^{-2\X}P_{\X-1}^{(0,1)}(3/2)$, and then by a certain explicit recurrence - this will be given later, see (\ref{main}) - such that
\begin{eqnarray*}
(-1)^{n+1}\lambda_{n}=\phi^{-2n}\sum\limits_{\ell=0}^{\infty}W_{\ell}(n).
\end{eqnarray*}
For the functions $W_{\ell}(n)$ we have an asymptotic formula
\begin{eqnarray*}
 W_{\ell}(n)=\frac{\sqrt[4]{5}}{2\sqrt{\pi}\cdot\ell^{3/2}\sqrt{n}}+\frac{B}{\ell^{3/2}n}\text{ for }n,\ell\geq 1,
\end{eqnarray*}
where the function $B=B(\ell,n)$ is bounded. \\

\noindent(ii) This decomposition is compatible with (\ref{mayer-0}) and gives the decomposition of trace formulas for the powers of $\mathcal{L}$: for the first, the second, and the third powers we have, respectively,
\begin{eqnarray}
\sum\limits_{n=1}^{\infty}(-1)^{n+1}\phi^{-2n} W_{\ell-1}(n)&=&\frac{1}{\xi^{-2}_{\ell}+1},\quad \ell\geq 1,\nonumber\\
\sum\limits_{i+j=\ell}\,\sum\limits_{n=1}^{\infty}\phi^{-4n}W_{i-1}(n)W_{j-1}(n)&=&
\sum\limits_{i+j=\ell}\frac{1}{(\xi_{i,j}\xi_{j,i})^{-2}-1},\quad \ell\geq 2,\label{sqq}\\
\sum\limits_{i+j+k=\ell}\,\sum\limits_{n=1}^{\infty}(-1)^{n+1}\phi^{-6n}W_{i-1}(n)W_{j-1}(n)W_{k-1}(n)&=&
\sum\limits_{i+j+k=\ell}\frac{1}{(\xi_{i,j,k}\xi_{j,k,i}\xi_{k,i,j})^{-2}+1},\quad \ell\geq 3.\nonumber
\end{eqnarray}
Analogously for higher powers of $\mathcal{L}$, for $\mathcal{L}^{k}$.\\

\noindent(iii)$\star$\footnote{This part will be explained, proved and expanded in \cite{antras}.} Further, let $\mathbf{a}=\{\ell_{1},\ell_{2},\dots,\ell_{s}\}$, $\ell_{i}\in\mathbb{N}$. Let us define
\begin{eqnarray*}
\Omega_{\mathbf{a}}(w)=\sum\limits_{n=1}^{\infty}\Big{(}\prod\limits_{i=1}^{s}
W_{\ell_{i}}(n)\Big{)}w^{n},\quad |w|< 1.
\end{eqnarray*} 
Then all $\Omega_{\mathbf{a}}(w)=\Omega_{\ell_{1},\ell_{2},\dots,\ell_{s}}(w)$ are arithmetic functions.\\

\noindent(iv) The solution to (\ref{tikr}) for the $n$th eigenvalue, the $n$th eigenfunction, can be canonically normalized, so that there  exists a sequence of structural constants $\{U_{n}(\phi^{-1}),n\in\mathbb{N}\}$.
\label{thm2}
\end{thm}
So, the fundamental trace formulas (\ref{mayer-0}) are split into a countable number of formulas each. Moreover, we will see in the Subsection \ref{refined} that, due to combinatoric reasons alone, one further refined decomposition occurs. Concerning the item (iv), one canonical normalization of $U_{n}$ comes from (\ref{canon}). The normalization we propose is the most natural in deriving formula in the item (i), as will be soon clear; see Proposition \ref{prop-uni}.\\

The simplest examples of (iii) are  
\begin{eqnarray}
\Omega_{\varnothing}(w)&=&\frac{w}{1-w},\nonumber\\
\Omega_{1}(w)&=&\frac{\phi^{2}+w}{2\big{(}(\phi^{4}-w)(1-w)\big{)}^{1/2}}-\frac{1}{2}.\label{om-pirm}
\end{eqnarray}
So, in the trace formulas now we are able to crystallize the contribution of each individual eigenvalue. Thus, this defines an infinite matrix 
\begin{eqnarray*}
\Big{(}(-1)^{n+1}\phi^{-2n}W_{\ell-1}(n)\Big{)}_{n,\ell=1}^{\infty}, 
\end{eqnarray*}
whose elements in rows add up to eigenvalues, elements in columns add up to $(\xi^{-2}_{\ell}+1)^{-1}$, and the sum of all real numbers in the matrix is equal to $\mathrm{Tr}(\mathcal{L})$. At the same time we are able to define a unique two variable function which governs the whole collection of eigenvalues and trace formulas:
\begin{eqnarray*}
\mathfrak{G}(w,\o)
=\sum\limits_{n=1}^{\infty}\sum\limits_{\ell=1}^{\infty}w^{n}\o^{\ell-1}W_{\ell-1}(n)=
\sum\limits_{\ell=0}^{\infty}\Omega_{\ell}(w)\o^{\ell},\quad |\o|\leq 1,\quad |w|<1.
\end{eqnarray*}
For example,
\begin{eqnarray*}
-\mathfrak{G}(-\phi^{-2},\o)=\sum\limits_{n=1}^{\infty}\lambda_{n}(\o)=\mathrm{Tr}(\mathcal{L}_{\o})=
\sum\limits_{\ell=1}^{\infty}\frac{\o^{\ell-1}}{\xi^{-2}_{\ell}+1}=
\int\limits_{0}^{\infty}\frac{J_{1}(2x)}{e^{x}-\o}\d x.
\end{eqnarray*}
The operator $\mathcal{L_{\omega}}$ will be soon defined in Subsection \ref{approach}. It is important to observe that Theorem \ref{thm2}, for the first time, gives rigorous certificates (though considerable computational time and space resources are needed) to calculate numerically the first few digits of an eigenvalue with any index.

\subsection{The approach}
\label{approach}

Our method of proving these results is constructive. Consider a function $f(\o,z)$, which is analytic in $\o$ for $|\o|\leq 1$, and for every such $\o$, $f(\o,z)\in\mathbf{V}$ as a function in $z$. Denote the set of all such functions by $\widetilde{\mathbf{V}}$. Then 
\begin{eqnarray*}
\Vert f\Vert_{\widetilde{\mathbf{V}}}=\sup\limits_{|\omega|\leq 1\atop |z-1|\leq\frac{3}{2}}\big{|}f(\omega,z)\big{|}
\end{eqnarray*}
makes $\widetilde{\mathbf{V}}$ into a Banach space. Let us define the operator $\mathcal{L}_{\o}:\widetilde{\mathbf{V}}\mapsto\widetilde{\mathbf{V}}$ by
\begin{eqnarray}
\mathcal{L}_{\o}[f(\o,t)](z)=\sum\limits_{m=1}^{\infty}\frac{\o^{m-1}}{(z+m)^2}
f\Big{(}\o,\frac{1}{z+m}\Big{)}.
\label{gen-op}
\end{eqnarray}
If $G(\o,z)$ is an eigenfunction of this operator, then a function $G(\o,z)$ is defined up to a ``scalar" multiple, which (in this case) is an arbitrary function in $\o$, holomorphic for $|\o|\leq 1$. Thus, if $\lambda(\o)$ is an eigenvalue of this operator, then
\begin{eqnarray}
\lambda(\o)G(\o,z)=\o\lambda(\o) G(\o,z+1)+\frac{1}{(z+1)^{2}}\cdot G\Big{(}\o,\frac{1}{z+1}\Big{)},\quad z\in\mathbb{C}\setminus(-\infty,-1],\quad |\o|\leq 1.
\label{reik}
\end{eqnarray}
We note that though $G(\omega,z)$ is initially defined for $|z-1|<\frac{3}{2}$, eigenfunctions automatically extend to the region $\{\mathbb{C}\setminus(-\infty,-1]\}$.\\

 For a two variable analytic function $g(\o,x)$, let $g(\o,x)[x^{m}]=\beta_{m}(\o)$ is the coefficient at $x^{m}$ in a Taylor series expansion of $g(\o,x)$ around $x=0$. All our results follow from the following explicit construction.
\begin{prop}
For every $n\in\mathbb{N}$, there exists the unique analytic function $\lambda_{n}(\o)$ and the unique analytic function $G_{n}(\o,z)$, $|\o|\leq 1$, $\Re z>-\frac{1}{2}$, whose Taylor coefficients in the variable $\o$ are explicitly constructable (see Section \ref{pirmas}), which satisfies in conjunction the functional equation (\ref{reik}) and the regularity condition (\ref{regular}) (uniformly in $\o$), with the following initial value and normalization properties:
\begin{itemize}
\item[i)]
\begin{eqnarray*}
\lambda_{n}(0)=(-1)^{n+1}\phi^{-2n},
\end{eqnarray*}
\item[ii)]
\begin{eqnarray*}
G_{n}(0,z)=\frac{(z-\phi^{-1})^{n-1}}{(z+\phi)^{n+1}},
\end{eqnarray*}
\item[iii)]
\begin{eqnarray*}
\frac{5}{(1-x)^{2}}G_{n}\Big{(}\o,\frac{x\phi+\phi^{-1}}{1-x}\Big{)}[x^{n}]=1;
\end{eqnarray*}
here $1=1\o^{0}+0\o^{1}+0\o^2+\cdots$ is a constant function in $\o$.
\end{itemize}
\label{prop-uni}
\end{prop}
We include the second property for clarity only, since it is the consequence of the first property and the three term functional equation (\ref{reik}). Thus, for every $n$, we construct explicitly $\lambda_{n}=\lambda_{n}(1)$ and the solution $U_{n}(z)=G_{n}(1,z)$, therefore canonically normalized which together satisfy (\ref{tikr}). The existence of this alternative to the one implied by (\ref{canon}) canonical normalization is also a novel feature in the theory of the transfer operator $\mathcal{L}$. This means that for each eigenvalue $\lambda_{n}$, there exists the canonical solution to (\ref{tikr}); for example, this reveals a set of new structural constants $\{U_{n}(\phi^{-1}):n\in\mathbb{N}\}$, as claimed by the item (iv) of Theorem \ref{thm2}. The specific choice of $z=\phi^{-1}$ will become clear from Subsection (\ref{subg}). \\

 Comparing trace formulas we will see that there are no other eigenvalues and eigenfunctions apart from those explicitly constructed. \emph{A priori}, eigenvalue $\delta(\o)$ of some operator $T_{\o}:\widetilde{\mathbf{V}}\mapsto\widetilde{\mathbf{V}}$, which is analytic in $\o$, can be expressed as a \emph{Puiseux series}, thus containing fractional powers of $\o$:
\begin{eqnarray*}
\delta(\o)=\sum\limits_{k=0}^{\infty}a_{k}\o^{k/L},\quad a_{k}\in\mathbb{C},\text{ for some }L\in\mathbb{N}.
\end{eqnarray*} 
\begin{cor}
Puiseux series for the eigenvalues and eigenfunctions of (\ref{reik}) contain only integral powers of $\o$. 
\end{cor} 
Thus, collecting everything together, we obtain
\begin{cor}All three claims of the  Conjecture are true.
\end{cor}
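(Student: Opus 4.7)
The plan is to extract all three claims from Theorem \ref{thm1} by elementary manipulations; the substantive work has really been done in establishing that quantitative asymptotic. Writing the content of Theorem \ref{thm1} as
\begin{eqnarray*}
(-1)^{n+1}\lambda_{n}=\phi^{-2n}\Big{(}1+\frac{c(n)}{\sqrt{n}}\Big{)},\qquad 0.4<c(n)<1.7,
\end{eqnarray*}
the Sign claim (ii) is immediate: since $c(n)>0$, the bracket is strictly positive for every $n\geq 1$, so $(-1)^{n+1}\lambda_{n}>0$.

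For the Ratio claim (iii), I would take the quotient
\begin{eqnarray*}
\frac{\lambda_{n}}{\lambda_{n+1}}=-\phi^{2}\cdot\frac{1+c(n)/\sqrt{n}}{1+c(n+1)/\sqrt{n+1}},
\end{eqnarray*}
and observe that, because $c$ is uniformly bounded, both correction terms $c(n)/\sqrt{n}$ and $c(n+1)/\sqrt{n+1}$ tend to $0$. The fraction therefore tends to $1$ and the ratio to $-\phi^{2}$.

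The main step, and the only one requiring a touch of care, is Simplicity (i). It suffices to establish the strict monotonicity $|\lambda_{n}|>|\lambda_{n+1}|$ for every $n\geq 1$, since then no two eigenvalues coincide either in sign or in modulus, which in the prescribed ordering by absolute value forces each eigenvalue to be simple. The required inequality is equivalent to
\begin{eqnarray*}
\phi^{2}\Big{(}1+\frac{c(n)}{\sqrt{n}}\Big{)}>1+\frac{c(n+1)}{\sqrt{n+1}},
\end{eqnarray*}
which, using the identity $\phi^{2}-1=\phi$, rearranges to
\begin{eqnarray*}
\phi+\frac{\phi^{2}\,c(n)}{\sqrt{n}}>\frac{c(n+1)}{\sqrt{n+1}}.
\end{eqnarray*}
The left-hand side is at least $\phi\approx 1.618$, whereas the right-hand side is at most $1.7/\sqrt{n+1}\leq 1.7/\sqrt{2}\approx 1.20$ for every $n\geq 1$. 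Since $1.20<\phi$, the inequality holds without exception, so $|\lambda_{n}|$ is strictly decreasing and the Simplicity claim follows.

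The hard part has already been absorbed into Theorem \ref{thm1}; the corollary itself is a matter of the arithmetic of $\phi$. It is worth stressing that the Ratio conjecture alone would deliver (i) only for sufficiently large $n$, leaving a finite range of indices to be inspected numerically; the explicit uniform bound $c(n)<1.7$ allows us to avoid any such case analysis and obtain monotonicity for all $n\geq 1$ in one stroke.
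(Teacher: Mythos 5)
The proposal is correct, and it takes the natural (indeed, the only reasonable) route: all three conjectures are read off directly from the quantitative form of Theorem \ref{thm1}. The paper states the Corollary without an explicit proof, so there is nothing to contrast with; your derivation is a sound reconstruction of the intended argument. Your observation at the end is the key point worth making explicit — the mere limit $\lambda_n/\lambda_{n+1}\to-\phi^2$ would only give Simplicity and Sign for $n$ large, whereas the uniform two-sided bound $0.4<c(n)<1.7$ of Theorem \ref{thm1} yields them for every $n\geq 1$ at once, via $\phi+\phi^2c(n)/\sqrt{n}\geq\phi>1.7/\sqrt{2}\geq c(n+1)/\sqrt{n+1}$. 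One small remark: the deduction ``$|\lambda_n|$ strictly decreasing $\Rightarrow$ eigenvalues simple'' implicitly uses that the sequence $(\lambda_n)$ is enumerated with algebraic multiplicities, which is the standard convention for trace-class operators and is needed for the trace identities quoted in the paper; you might state this explicitly, but it is not a gap.
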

Note that we prove the \emph{eigenvalue simplicity conjecture} independently of the asymptotic result: the proof of simplicity, as well as the decomposition formulas are based on Proposition \ref{prop-uni}, but follow independently. Indeed, this proposition shows that for one particular eigenvalue only one eigenfunction can be constructed, and there are no other, since that would invalidate the trace formulas. For the sign conjecture there is no direct proof, so we must rely on computer resources and explicit bounds for the function $B(\ell,n)$ from the Theorem \ref{thm2}.  
\subsection{The case $\o=\frac{1}{2}$}Note that, in relation to the \emph{Minkowski question mark function}, the eigenvalues of the following operator were investigated in \cite{ga,alkauskas-inv}. For $f\in\mathbf{V}$, it is defined by
\begin{eqnarray*}
\widetilde{\mathcal{L}}[f(t)](z)=\sum_{n=1}^{\infty}\frac{1}{2^{n}(z+n)^{2}}
f\Big{(}\frac{1}{z+n}\Big{)}.
\end{eqnarray*}
The first four eigenvalues are
\begin{eqnarray*}
\widetilde{\lambda}_{1}=0.25553210_{+},\quad\widetilde{\lambda}_{2}=-0.08892666_{+},\quad
\widetilde{\lambda}_{3}=0.03261586_{+},\quad\widetilde{\lambda}_{4}=-0.01217621_{+}.
\end{eqnarray*}
Based on the results of the current work and the formula (\ref{finito}), we can therefore claim that
\begin{eqnarray*}
(-1)^{n+1}\widetilde{\lambda}_{n}=\frac{1}{2}
\Lambda\Big{(}\frac{1}{2},n\Big{)}=\phi^{-2n}\sum\limits_{\ell=0}^{\infty}W_{\ell}(n)2^{-\ell-1},
\quad n\in\mathbb{N}. 
\end{eqnarray*}
In particular,
\begin{eqnarray*}
(-1)^{n+1}\widetilde{\lambda}_{n}&=&\frac{\phi^{-2n}}{2}
+\widetilde{C}\cdot\frac{\phi^{-2n}}{\sqrt{n}}+\widetilde{d}(n)\cdot\frac{\phi^{-2n}}{n},\\
\text{where the constant }\widetilde{C}&=&\frac{\sqrt[4]{5}\cdot {\rm Li}_{3/2}(1/2)}{4\sqrt{\pi}}=0.1317875324465590_{+},
\end{eqnarray*}
and the function $\widetilde{d}(n)$ is bounded. 
\section{Examples and decompositions}
\subsection{Initial cases of decomposition}
 In the next examples $k$ always means which power of the operator $\mathcal{L}$ we are looking at, and $\ell$ - the sum of partial quotients in the period of the quadratic irrational. \\

 The decomposition identities of Theorem \ref{thm1}, (ii) can be checked to hold true for, say, $\ell=1,2$  (in the first case which describes $\mathcal{L}^{1}$), $\ell=3$ (in the second case which describes $\mathcal{L}^{2}$), and $\ell=4$ (in the third case which describes $\mathcal{L}^{3}$), if we use the generating function for Jacobi polynomials (\ref{genfunc}):
\begin{eqnarray*}
\sum\limits_{n=1}^{\infty}(-1)^{n+1}\phi^{-2n}=\frac{1}{\phi^2+1}&=&\frac{1}{\xi_{1}^{-2}+1}=(-1)^1\Omega_{\varnothing}(-\phi^{-2}),\\
\frac{5}{4}\sum\limits_{n=1}^{\infty}(-1)^{n+1}\phi^{-4n}P_{n-1}^{(0,1)}(3/2)&=&\frac{1}{4+2\sqrt{2}}=\frac{1}{\xi_{2}^{-2}+1}=(-1)^1\Omega_{1}(-\phi^{-2}),\\
\frac{5}{4}\sum\limits_{n=1}^{\infty}\phi^{-6n}P_{n-1}^{(0,1)}(3/2)&=&\frac{1}{6+4\sqrt{3}}=\frac{1}{(\xi_{1,2}\xi_{2,1})^{-2}-1}=(-1)^2\Omega_{1}(\phi^{-4}),\\
\frac{5}{4}\sum\limits_{n=1}^{\infty}(-1)^{n+1}\phi^{-8n}P_{n-1}^{(0,1)}(3/2)&=&\frac{1}{20+6\sqrt{10}}=\frac{1}{(\xi_{1,1,2}\xi_{1,2,1}\xi_{2,1,1})^{-2}+1}=(-1)^3\Omega_{1}(-\phi^{-6}).
\end{eqnarray*}
MAPLE re-confirms this, too. In general, we will see in Subsection \ref{interpol}, and this can be calculated directly from (\ref{om-pirm}), that
\begin{eqnarray}
(-1)^{k}\Omega\Big{(}(-1)^{k}\phi^{-2k}\Big{)}=\frac{1}{\psi_{k}},\quad\psi_{k}=2F_{k+1}^{2}-2(-1)^{k}+2F_{k+1}\sqrt{F_{k+1}^{2}-(-1)^{k}}.
\label{pirmas-l}
\end{eqnarray} 
Here $F_{k}$ is the standard Fibonacci sequence with the (standard) seed values $F_{1}=F_{2}=1$.\\

\noindent\fbox{$\ell=3$, $k=1$.} Next, though we do not have explicit values for $W_{2}(n)$ in a closed form, but numerical values can be calculated easily to a very high precision. So, in particular, the first identity of (ii) for $\ell=3$, minding the values in the Table \ref{table2} and further calculations, give the correct anticipated value, and this shows that our method indeed works! Thus, MAPLE does indeed confirm that
\begin{eqnarray*}
\sum\limits_{n=1}^{\infty}(-1)^{n+1}W_{2}(n)\phi^{-2n}=0.0839748528310781_{+}=\frac{2}{13+3\sqrt{13}}=\frac{1}{\xi_{3}^{-2}+1}.
\end{eqnarray*}
\noindent\fbox{$\ell=4$, $k=2$.} A still more interesting example occurs in the second identity of (ii) in case $\ell=4$. Numerically, we have:
\begin{eqnarray*}
A&=&\sum\limits_{n=1}^{\infty}W_{2}(n)\phi^{-4n}=0.0428848639793538_{+},\\
B&=&\sum\limits_{n=1}^{\infty}W^{2}_{1}(n)\phi^{-4n}=0.0356498091111648_{+},
\end{eqnarray*}
\begin{eqnarray*}
C&=&\frac{2}{21+5\sqrt{21}}=
\frac{1}{(\xi_{1,3}\xi_{3,1})^{-2}-1}=0.0455447255899809_{+},\\
D&=&\frac{1}{16+12\sqrt{2}}=
\frac{1}{(\xi_{2,2}\xi_{2,2})^{-2}-1}=0.0303300858899106_{+}.
\end{eqnarray*}
And thus, we get the correct identity (compare (\ref{reikia2}) and (\ref{reikia}) below for $k=2$)
\begin{eqnarray*}
2A+B=2C+D=0.1214195370698725_{+}.
\end{eqnarray*}
As can be anticipated, this identity apparently does not decompose into two identities, since there is no \emph{a priori} reason why $A$ or $B$ should be algebraic or other explicit numbers. In fact, we know the formula (\ref{elliptic}), where $B$ is expressed in terms of complete elliptic integrals of the first and the third kind. In our case $w=\phi^{-4}$. So, $z=\frac{\phi^{2}}{3}$, $z\sqrt{w}=\frac{1}{3}$, $zw=\frac{\phi^{-2}}{3}$, and consequently,
\begin{eqnarray*}
4\pi\cdot\Big{(}B+\frac{1}{2}\Big{)}
=\frac{4\sqrt{5}}{3}\cdot  K\Big{(}\frac{1}{3}\Big{)}+
\frac{2}{3}\cdot \Pi\Big{(}\frac{\phi^{2}}{3},\frac{1}{3}\Big{)}-\frac{2}{3}\cdot\Pi\Big{(}\frac{\phi^{-2}}{3},\frac{1}{3}\Big{)}.
\end{eqnarray*}
We recall that
\begin{eqnarray*}
\Pi(\nu,k)=\int\limits_{0}^{1}\frac{\d t}{(1-\nu t^2)\sqrt{(1-t^2)(1-k^2 t^2)}},
\quad K(k)=\Pi(0,k).
\end{eqnarray*}
So, according to the notion of Zagier-Kontsevitch \cite{konzag}, which we introduced in the end of Subsection \ref{intro-pirm}, the number $B$ is \emph{arithmetic} in the sense that it belongs to the extended ring of periods $\widehat{\mathcal{P}}=\mathcal{P}[\frac{1}{\pi}]$, and then so does $A$, since algebraic numbers are also periods. In general, in the next subsection we will see that
\begin{eqnarray*}
\Omega_{2}\Big{(}(-1)^{k}\phi^{-2k}\Big{)}\in\widehat{\mathcal{P}}\text{ for }k\in\mathbb{N}.
\end{eqnarray*}
This is exactly what we mean by the item (iii) in Theorem \ref{thm2}.\\

\noindent\fbox{$\ell=5$, $k=3$.} Further, the last identity of (ii) in case $\ell=5$ gives the following. Let us define
\begin{eqnarray*}
E&=&\sum\limits_{n=1}^{\infty}(-1)^{n+1}W_{2}(n)\phi^{-6n}=0.0144368544431652_{+},\\
F&=&\sum\limits_{n=1}^{\infty}(-1)^{n+1}W^{2}_{1}(n)\phi^{-6n}=0.0123983653921924_{+},\\
G&=&\frac{1}{34+8\sqrt{17}}
=\frac{1}{(\xi_{1,1,3}\xi_{1,3,1}\xi_{3,1,1})^{-2}+1}=
0.0149287499273340_{+},\\
H&=&\frac{2}{85+9\sqrt{85}}=
\frac{1}{(\xi_{1,2,2}\xi_{2,2,1}\xi_{2,1,2})^{-2}+1}
=0.0119064699080236_{+}.
\end{eqnarray*}
Then
\begin{eqnarray*}
E+F=G+H=0.0268352198353576_{+}.
\end{eqnarray*}
In fact, the numbers $G$ and $H$ are special cases of the numbers which will be investigated in Subsections \ref{interpol} and \ref{interpol2}, respectively: $1/G$ is given by the formula (\ref{prop4}) ($N=3$, $k=3$), and $1/H$ - by the formulas (\ref{kr}) and (\ref{kr2}) ($k=3$, $r=2$). As noted above, $E,F\in\widehat{\mathcal{P}}$.\\

\noindent\fbox{$\ell=6$, $k=4$.} This is a final example from the ones with $\ell-k=2$. We will see now the sum of three quadratic irrationals. Indeed, let
\begin{eqnarray*}
I&=&\sum\limits_{n=1}^{\infty}W_{2}(n)\phi^{-8n}=0.0057706896109551_{+},\\
J&=&\sum\limits_{n=1}^{\infty}W^{2}_{1}(n)\phi^{-8n}=0.0048993082411535_{+},
\end{eqnarray*}
\begin{eqnarray*}
K&=&\frac{2}{165+13\sqrt{165}}
=\frac{1}{(\xi_{1,1,1,3}\xi_{1,1,3,1}\xi_{1,3,1,1}\xi_{3,1,1,1})^{-2}-1}=0.0060243137049899_{+},\\
L&=&\frac{2}{221+15\sqrt{221}}=
\frac{1}{(\xi_{2,2,1,1}\xi_{2,1,1,2}\xi_{1,1,2,2}\xi_{1,2,2,1})^{-2}-1}=0.00450459549723434_{+},\\
M&=&\frac{1}{96+56\sqrt{3}}=
\frac{1}{(\xi_{2,1,2,1}\xi_{1,2,1,2}\xi_{2,1,2,1}\xi_{1,2,1,2})^{-2}-1}
=0.0051814855409225_{+}.
\end{eqnarray*}
Again, we used formulas (\ref{prop4}), (\ref{kr}) and  (\ref{kr2}). Once more, MAPLE confirms that 
\begin{eqnarray*}
2I+3J=2K+2L+M=0.0262393039453711_{+}.
\end{eqnarray*}
\noindent\fbox{$\ell=4$, $k=1$, and $\ell=5$, $k=2$.} Since there are infinitely many identities involving the three functions $W_{1}$, $W_{2}$, and $W_{3}$, we will confine to the last three examples, now involving $W_{3}$. First, we will check the validity of the first two identities of (ii) for $\ell=4$ and $\ell=5$, respectively. Using the values in Table \ref{table3} and further calculations, we thus get the correct value:
\begin{eqnarray*}
\sum\limits_{n=1}^{\infty}(-1)^{n+1}W_{3}(n)\phi^{-2n}=0.052786404500042_{+}=
\frac{1}{10+4\sqrt{5}}=\frac{1}{\xi^{-2}_{4}+1}.
\end{eqnarray*}  
Next, let us define
\begin{eqnarray*}
N&=&\sum\limits_{n=1}^{\infty}W_{3}(n)\phi^{-4n}=0.0268901808819348_{+},\\
O&=&\sum\limits_{n=1}^{\infty}W_{1}(n)W_{2}(n)\phi^{-4n}=0.01983768450229800_{+},
\end{eqnarray*}
\begin{eqnarray*}
P&=&\frac{1}{(\xi_{1,4}\xi_{4,1})^{-2}-1}=\frac{1}{16+12\sqrt{2}}=0.0303300858899106_{+},\\
Q&=&\frac{1}{(\xi_{2,3}\xi_{3,2})^{-2}-1}=\frac{1}{30+8\sqrt{15}}=0.0163977794943222_{+}.
\end{eqnarray*}  
(Note that, according to the formula in Subsection \ref{interpol0}, $\xi_{2,2}\xi_{2,2}=\xi_{1,4}\xi_{4,1}$). Then we get the correct identity:
\begin{eqnarray*}
N+O=P+Q=0.0467278653842328_{+}.
\end{eqnarray*}
\noindent\fbox{$\ell=6$, $k=3$.} This is the final example. Let
\begin{eqnarray*}
R&=&\sum\limits_{n=1}^{\infty}(-1)^{n+1}W_{3}(n)\phi^{-6n}=0.0090670451347138_{+},\\
S&=&\sum\limits_{n=1}^{\infty}(-1)^{n+1}W_{1}(n)W_{2}(n)\phi^{-6n}=0.0069673221927849_{+},\\
T&=&\sum\limits_{n=1}^{\infty}(-1)^{n+1}W^{3}_{1}(n)\phi^{-6n}=0.0059673995757538_{+},
\end{eqnarray*}
\begin{eqnarray*}
U&=&\frac{1}{52+10\sqrt{26}}=\frac{1}{(\xi_{1,1,4}\xi_{1,4,1}\xi_{4,1,1})^{-2}+1}=0.0097096621545399_{+},\\
V&=&\frac{1}{74+12\sqrt{37}}=\frac{1}{(\xi_{1,2,3}\xi_{2,3,1}\xi_{3,1,2})^{-2}+1}=0.0068030380839281_{+},\\
W&=&\frac{1}{74+12\sqrt{37}}=\frac{1}{(\xi_{1,3,2}\xi_{3,2,1}\xi_{2,1,3})^{-2}+1}=0.0068030380839281_{+},\\
X&=&\frac{1}{100+70\sqrt{2}}=\frac{1}{(\xi_{2,2,2}\xi_{2,2,2}\xi_{2,2,2})^{-2}+1}=0.0050252531694167_{+}.\\
\end{eqnarray*}
Quadratic irrationals can be calculated directly, but it is most convenient to use formula (\ref{skaic}), $s=1$. 
The correct identity, confirmed numerically, is
\begin{eqnarray*}
3R+6S+T=3U+3V+3W+X=0.0074972468136605_{+}.
\end{eqnarray*}
\subsection{Overlapping values}One interesting question which arises while inspecting the above examples and Theorem \ref{thm2} is as follows. Some values of $\Omega_{\m{a}}(w)$ at $w=(-1)^{k}\phi^{-2k}$ are left out. Namely, only values for $k\geq s$ occur in Theorem \ref{thm2}, where $\m{a}=\{\ell_{1},\ldots,\ell_{s}\}$. Hence, we pose the following problem, which we intend to investigate in the second part \cite{antras}.
\begin{prob}Describe the arithmetic meaning of the values
\begin{eqnarray*}
\Omega_{\ell_{\ell_{1},\ell_{2},\ldots,\ell_{s}}}\Big{(}(-1)^{k}\phi^{-2k}\Big{)},\quad 1\leq k<s.
\end{eqnarray*}
\end{prob}
For example, let
\begin{eqnarray*}
R&=&\sum\limits_{n=1}^{\infty}(-1)^{n+1}W^{3}_{1}(n)\phi^{-2n}=0.0375599269123177_{+},\\
S&=&\sum\limits_{n=1}^{\infty}(-1)^{n+1}W_{1}(n)W_{2}(n)\phi^{-2n}=0.0426901777719431_{+}.
\end{eqnarray*}
However, MAPLE package {\tt IntegerRelations} and the PSLQ algorithm does not seem to find that the number $aR+bS$ for small $a,b\in\mathbb{Z}$ might be an algebraic number of degree $4$; this includes the sum of two quadratic irrationals. It might happen that some combination of $R$ and $S$ is the sum of three quadratic irrationals, like in the example $\{\ell=6$, $k=4\}$ above. However, even for the sum of two quadratic irrationals, the coefficients of the $4$th degree polynomial are in the range of $10.000$, hence much more powerful numerical calculations are needed to detect any algebraicity.  
\subsection{Arithmetic of decomposition formulas}
\label{subdec}
The main Theorem \ref{thm2} is derived while investigating the $g-$coefficients of a function $U(z)$ which satisfies the functional equation (\ref{tikr}) and the regularity property (\ref{regular}); see Subsection \ref{subg} and Sections \ref{pirmas} and \ref{general}. In fact, there exists the second way to get information on the functions $W_{\ell}(n)$, and it is based on the trace formulas of D. Mayer. This second method of approach towards the whole problem of determining arithmetic and structure of the eigenvalues reduces to investigation of high powers of the operator $\mathcal{L}_{\o}$ (see (\ref{gen-op}) and Section \ref{general}) and then at inspecting the small powers of $\o$. \\

For example, take the $k^{\rm th}$ power of $\mathcal{L}_{\o}$. First, we have \cite{mayer,mayer2}:
\begin{eqnarray}
\mathrm{Tr}(\mathcal{L}_{\o}^{k})
=\sum\limits_{i_{1},i_{2},\ldots,i_{k}=1}^{\infty}
\o^{i_{1}+i_{2}+\cdots+i_{k}-k}
\Big{[}\prod\limits_{s=1}^{k}
\xi_{i_{s},i_{s+1},\ldots, i_{k},i_{1},\ldots,i_{s-1}}^{-2}-(-1)^{k}
\Big{]}^{-1}.\label{mayer}
\end{eqnarray}
On the other hand, by our result (\ref{finito}),
\begin{eqnarray}
\mathrm{Tr}(\mathcal{L}_{\o}^{k})&=&
\sum\limits_{n=1}^{\infty}\lambda^{k}_{n}(\o)
=\sum\limits_{n=1}^{\infty}(-1)^{nk+k}\phi^{-2nk}
\Big{(}\sum\limits_{\ell=0}^{\infty}\o^{\ell}\cdot W_{\ell}(n)\Big{)}^{k}
\nonumber\\
&=&\sum\limits_{\ell=0}^{\infty}\o^{\ell}\sum\limits_{n=1}^{\infty}(-1)^{nk+k}\phi^{-2nk}
\sum\limits_{j_{1}+\cdots +j_{k}=\ell}W_{j_{1}}(n)\cdots 
W_{j_{k}}(n).\label{mano}
\end{eqnarray}
Now we proceed with comparing the corresponding coefficients at $\o^{\ell}$. First, the coefficients at $\o^{0}$ of (\ref{mayer}) is equal to $(\phi^{2k}-(-1)^{k})^{-1}$, while the coefficient at $\o^{0}$ of (\ref{mano}) is equal to
\begin{eqnarray*}
\sum\limits_{n=1}^{\infty}(-1)^{nk+k}\phi^{-2nk}=\frac{1}{\phi^{2k}-(-1)^{k}},
\end{eqnarray*}
so both values do match. Now look at the first non-trivial case, namely, the power $\o^{1}$.
 This way we find the function which interpolates a class of quadratic irrationals as follows. Let
\begin{eqnarray*}
\xi_{\underbrace{2,1,\ldots,1}\limits_{k}}\cdot\xi_{\underbrace{1,2,\ldots,1}\limits_{k}}\cdots\xi_{\underbrace{1,1,\ldots,2}\limits_{k}}=\Gamma_{2,k}.
\end{eqnarray*}
(For the notation ``$\Gamma_{2,k}$", see Subsection \ref{interpol}). Then (see (\ref{om-pirm}))
\begin{eqnarray}
(-1)^{k}\Omega_{1}\Big{(}(-1)^{k}\phi^{-2k}\Big{)}=\frac{1}{\Gamma_{2,k}^{-2}-(-1)^{k}}.
\label{pav}
\end{eqnarray}
In the formula (\ref{pirmas-l}), we denoted $\psi_{k}=\Gamma_{2,k}^{-2}-(-1)^{k}$. The identity (\ref{pav}) corresponds to the case when the sum of $k$ summands is equal to $k+1$.

\subsection{Refined decomposition}
\label{refined}
Now, look at $\o^{2}$ of the expansions (\ref{mayer}) and (\ref{mano}). This corresponds to the combinatoric case when the sum of $k$ summands is equal to $k+2$. So, this can happen either in the case $3+1+\cdots+1$ ($k$ unities), or one of the cases $2+2+1+\cdots+1$, $2+1+2+1+\cdots+1$ ($k-1$ unities), and so on.  Thus, the decomposition of trace formulas gives in this case the following identity:
\begin{eqnarray}
& &\sum\limits_{n=1}^{\infty}(-1)^{nk+k}\phi^{-2nk}\sum\limits_{j_{1}+\cdots +j_{k}=2}W_{j_{1}}(n)\cdots W_{j_{k}}(n)\nonumber\\
&=&\sum\limits_{i_{1}+i_{2}+\cdots+i_{k}=k+2}
\Big{[}\prod\limits_{s=1}^{k}\xi^{-2}_{i_{s},i_{s+1},\ldots, i_{k},i_{1},\ldots,i_{s-1}}-(-1)^{k}\Big{]}^{-1},\text{ for any }k\in\mathbb{N}.\label{pedsakas}
\end{eqnarray}
So, the left hand side, minding the identity $W_{0}(n)=1$, is equal to
\begin{eqnarray}
& &(-1)^{k}k\sum\limits_{n=1}^{\infty}(-1)^{nk}\phi^{-2nk}W_{2}(n)
+(-1)^{k}\binom{k}{2}\sum\limits_{n=1}^{\infty}(-1)^{nk}\phi^{-2nk}W_{1}^{2}(n)\nonumber\\
&=&
(-1)^{k}k\Omega_{2}\Big{(}(-1)^{k}\phi^{-2k}\Big{)}+(-1)^{k}\binom{k}{2}\Omega_{1,1}\Big{(}(-1)^{k}\phi^{-2k}\Big{)}.
\label{reikia2}
\end{eqnarray}
On the other hand, using the notation of Subsections \ref{interpol} and \ref{interpol2}, we get that the r.h.s. of (\ref{pedsakas}) is equal to
\begin{eqnarray}
\frac{(-1)^{k}k}{\Theta_{3}\Big{(}(-1)^{k}\phi^{-2k}\Big{)}}+\frac{1}{2}\sum\limits_{r=2}^{k}\frac{(-1)^{k}k}{\Pi_{r}\Big{(}(-1)^{k}\phi^{-2k}\Big{)}},
\label{reikia}
\end{eqnarray}
where the first summand corresponds to the case $3+1+\cdots+1$ and all its cyclic permutations, and the second sum - to the cases $2+2+1+\cdots+1$ ($\Pi_{2}$), $2+1+2+1+\cdots+1$ ($\Pi_{3})$, and up to $2+1+\cdots+1+2$ ($\Pi_{k}$). The factor ``$\frac{1}{2}$" arises from the fact that  any of the two ``2"s can be cyclically permute to start the sum. Obviously, $\Pi_{r}(w)=\Pi_{k-r+2}(w)$. Since the Taylor coefficients of $\Omega_{1,1}(w)$ are equal to $W^{2}_{1}(n)=O(n^{-1})$, we can also calculate the asymptotics of the Taylor coefficients (in $w=(-1)^{k}\phi^{-2k}$) of the function (\ref{reikia}), and this gives the exact asymptotics of the function $W_{2}(n)$. Moreover, the formula (\ref{pedsakas}) further decomposes as follows. We see from (\ref{reikia2}) that the identity which equates it to (\ref{reikia}) can be written in the form
\begin{eqnarray*}
k A\Big{(}(-1)^k\phi^{-2k}\Big{)}+k^{2} B\Big{(}(-1)^{k}\phi^{-2k}\Big{)}
=k C\Big{(}(-1)^k\phi^{-2k}\Big{)}+k^{2} D\Big{(}(-1)^{k}\phi^{-2k}\Big{)},\quad k\in\mathbb{N},
\end{eqnarray*}
where $A(w),B(w),C(w),D(w)$ are analytic functions for $|w|< 1$.
This implies
\begin{eqnarray*}
k(A-C)\Big{(}(-1)^k\phi^{-2k}\Big{)}=
k^2(D-B)\Big{(}(-1)^k\phi^{-2k}\Big{)}.
\end{eqnarray*}
Now the assumption that $A$ is not identical to $C$, after investigating the first non-zero Taylor coefficient of $A(w)-C(w)$, leads to a contradiction. Thus, $A\equiv C$ and $B\equiv D$. \\

This is the general outline how our second way to calculate the functions $W_{\ell}(n)$ works, and the first few steps are presented in Subsections \ref{interpol} and \ref{interpol2}. This method is the main topic of the forthcoming paper \cite{antras}. As mentioned, the first way by which we proof all our main results is presented in  Section \ref{pirmas}.
\section{Tools and preliminary results}
\subsection{Jacobi polynomials}
\label{jac-poli}
For $\alpha,\beta\in\mathbb{Z}$, $m\in\mathbb{N}_{0}$, the classical Jacobi polynomials are given as follows (\cite{szego}, \S 4.6, Formula 4.6.1 in Russian translation of the book). 
\begin{eqnarray}
(x-1)^{\alpha}(x+1)^{\beta}P_{m}^{(\alpha,\beta)}(x)=
\frac{1}{2^{m+1}\pi i}\oint\limits_{\mathcal{C}}\frac{(w-1)^{m+\alpha}(w+1)^{m+\beta}}{(w-x)^{m+1}}\d w;
\label{jacobi}
\end{eqnarray}
here a small contour $\mathcal{C}$ winds $w=x$ once in the positive direction. For arbitrary $x$ outside the closed interval $[-1,1]$, one has an asymptotic formula (\cite{szego}, Theorem 8.21.7)
\begin{eqnarray}
P_{m}^{(\alpha,\beta)}(x)\sim \frac{\big{(}(x+1)^{1/2}+(x-1)^{1/2}\big{)}^{\alpha+\beta}}
{(x-1)^{\alpha/2}(x+1)^{\beta/2}(x^2-1)^{1/4}}\cdot
\frac{1}{\sqrt{2\pi m}}\cdot\Big{(}x+(x^2-1)^{1/2}\Big{)}^{m+1/2},\quad\text{ as }m\rightarrow\infty.
\label{asympt}
\end{eqnarray}
We will now need the case $m=\X-1$, $(\alpha,\beta)=(0,1)$ (see Theorem \ref{thm2}). Moreover, one can extract the second asymptotic term. In this particular case this reads as
\begin{eqnarray*}
\frac{5}{4}P^{(0,1)}_{\X-1}(3/2)\sim \frac{\phi^{2\X}}{\sqrt{\X}}\cdot
\frac{5^{1/4}}{2\sqrt{\pi}}+O\Bigg{(}\frac{\phi^{2\X}}{\X^{3/2}}\Bigg{)}, \text{ as }\m{X}\rightarrow\infty
\end{eqnarray*}
(see Proposition \ref{propK}). Here and in the sequel one can think of $\X=n$, which is exactly the index of an eigenvalue $\lambda_{n}$ - notations $n$ or $\X$ are reserved for this purpose throughout this paper. Though at some places we choose to use an unspecified variable $\m{X}$ to denote that it is a function, a generic version of $n$.\\

The generating function for Jacobi polynomials is crucial in our investigations. For sufficiently small $w$, we have (\cite{szego}, formula (4.4.5)):
\begin{eqnarray}
\Delta^{(\alpha,\beta)}(w)&=&\sum\limits_{m=0}^{\infty}P_{m}^{(\alpha,\beta)}(x)w^{m}\nonumber\\
&=&2^{\alpha+\beta}(1-2xw+w^{2})^{-\frac{1}{2}}\nonumber\\
&\times&
\Big{[}1-w+(1-2xw+w^{2})^{\frac{1}{2}}\Big{]}^{-\alpha}
\Big{[}1+w+(1-2xw+w^{2})^{\frac{1}{2}}\Big{]}^{-\beta}.
\label{gen-jacobi}
\end{eqnarray}
In particular, when $\alpha=0$, $\beta=1$, this gives
\begin{eqnarray*}
\sum\limits_{m=0}^{\infty}P_{m}^{(0,1)}(x)w^{m}=\frac{1+w}{(1-2xw+w^2)^{1/2}(x+1)w}-\frac{1}{(x+1)w}.
\end{eqnarray*}
We will frequently refer to these identities. Now we can explain why precisely Jacobi polynomials $P_{m}^{(0,1)}(x)$  appear in the investigations of the operator $\mathcal{L}$.
\subsection{Mayer-Ruelle operator}\label{ruelle}
For the same class of functions $\mathbf{V}$ and for a complex number $s$, $\Re(s)>\frac{1}{2}$, one defines \emph{the Mayer-Ruelle transfer operator} by \cite{zagier2,zagier3,mayer3,zagier1}
\begin{eqnarray*}
L_{s}[f(t)](z)=\sum\limits_{m=1}^{\infty}\frac{1}{(z+m)^{2s}}f\Big{(}\frac{1}{z+m}\Big{)}.
\end{eqnarray*}
Thus, $L_{1}=\mathcal{L}$, and the operator $L_{s}$ is extended to all complex numbers $s$ by an analytic continuation. As the fundamental contribution, it was proved by D. Mayer that \cite{zagier2,mayer3}
\begin{eqnarray*}
\det(1-L_{s}^{2})=\prod\limits_{n=1}^{\infty}\Big{(}1-\beta^{2}_{n}(s)\Big{)}=Z(s),
\end{eqnarray*}
where on the left we have the Fredholm determinant of the operator defined as the product involving eigenvalues $\beta_{n}(s)$ of the operator $L_{s}$ (as given in the middle), and on the right - the Selberg zeta function for the full modular group \cite{iwaniec,zagier3}. So, in this notation, $\beta_{n}(1)=\lambda_{n}$. Note that 
\begin{eqnarray*}
-\frac{1}{2}\frac{\d}{\d s}\beta_{1}(s)\Big{|}_{s=1}=\frac{\pi^{2}}{12\log 2}
\end{eqnarray*}
is the \emph{L\'{e}vy constant}. We avoided to use the notation $\lambda_{n}(s)$ since this is reserved for the eigenvalues of the operator $\mathcal{L}_{\o}$; see the formula (\ref{finito}) and around. From the work of Lewis and Zagier \cite{zagier3} we know that $\beta_{n}(s)=-1$ for a certain $n\in\mathbb{N}$ if and only if $s=\frac{1}{2}+it$ is a spectral parameter for the \emph{hyperbolic Laplace-Beltrami operator} corresponding to an odd Maass wave form (then $\frac{1}{4}+t^{2}$ is an eigenvalue of this operator), and $\beta_{n}(s)=1$ for a certain $n\in\mathbb{N}$ if this spectral parameter corresponds to an even Maass wave form, or $2s$ is a non-trivial zero of the Riemann zeta function. We remind that the hyperbolic Laplace-Beltrami operator acts on the functions defined on ${\sf PSL}_{2}(\mathbb{Z})\backslash\mathfrak{h}$, and is given by 
\begin{eqnarray*}
\Delta=-y^{2}\Big{(}\frac{\partial^{2}}{\partial x^{2}}+\frac{\partial^{2}}{\partial y^{2}}\Big{)};
\end{eqnarray*}
 here $\mathfrak{h}$ is the upper half-plane.  
The formula (\ref{mayer}), in case $\o=1$ and when extended to the operator $L_{s}$, can be written as  
\begin{eqnarray*}
\mathrm{Tr}(L_{s}^{k})=\sum\limits_{n=1}^{\infty}\beta^{k}_{n}(s)=
\sum\limits_{|h|=k}\frac{\tau(h)^{-2s}}{1-(-1)^{k}\tau(h)^{-2}}.
\end{eqnarray*}
Here the summation is over all strictly periodic quadratic irrationals $0<h<1$ of period $k$, and 
\begin{eqnarray}
\tau(h)=\frac{Q_{k}+P_{k-1}+\sqrt{(Q_{k}+P_{k-1})^{2}-4(-1)^{k}}}{2};
\label{tau}
\end{eqnarray}
the notation $\frac{P_{s}}{Q_{s}}$ stands for the $s$th convergent to $h$. Note that we can write the generic summand for $\mathrm{Tr}(L_{s}^{k})$ as
\begin{eqnarray}
\frac{\tau(h)^{-2s+2}}{\tau(h)^{2}-(-1)^{k}}=
\frac{2^{2s-1}(Q_{k}+P_{k-1}+\sqrt{D})^{-2s+2}}{D+(Q_{k}+P_{k-1})\sqrt{D}},\quad
D=(Q_{k}+P_{k-1})^2-4(-1)^{k}.
\label{skaic}
\end{eqnarray}
Now, look at (\ref{gen-jacobi}), where we put $\alpha=0$. If we imagine for a moment that $1+w\doteq Q_{k}+P_{k-1}$ and $D\doteq (1-2xw+w^{2})$ - this equivalence will become clear in Subsections \ref{interpol} and \ref{interpol2} - we get that the above, including the constant factor, is exactly the generating function for Jacobi polynomials $P_{m}^{(0,2s-1)}(x)$. Whence we derive an important observation:\\

\emph{The operator $L_{s}$ is governed by Jacobi polynomials $P_{m}^{(0,2s-1)}(x)$ the same way as $\mathcal{L}$ is governed by $P_{m}^{(0,1)}(x)$ in this paper}.\\

The constants $x$ and $\beta$ in our first approach are always equal to $\frac{3}{2}$, $1$, respectively, while $\alpha$ can attain any integral value. Not so for the .second approach described in Subsection \ref{subdec}, Sections \ref{interpoli-1} and \ref{interpoli-2}. See, for example, the formula (\ref{penkes}), where $x$ can attain many different rational values, while the pair $(\alpha,\beta)=(0,1)$ is always fixed. This general complex $s$ case is a central topic of the third part of our study \cite{trecias}. 
\subsection{Classification of non-trivial zeros of $\zeta(s)$}
As a small aside and as an illustration of deep relation between cusp forms for ${\sf PSL}_{2}(\mathbb{Z})$, non-trivial zeros of the Riemann zeta function $\zeta(s)$, and eigenvalues of $\mathcal{L}$, we pose two related problems.\\

 We re-iterate that $Z(s)$ vanishes at $s=\frac{\rho}{2}$, where $\rho$ is a non-trivial zero of $\zeta(s)$. An important consequence of the results in the current paper is the fact that labelling eigenvalues $\lambda_{n}$ with an integer $n$ is canonical. This corresponds to polynomials of degree $(n+1)$, or, rather, rational functions $u_{0}(n,z)$ with denominator of degree $(n+1)$; see (\ref{u-init}). This ordering, as is implied by Theorem \ref{thm1}, corresponds exactly to ordering $\lambda_{n}$ according to their absolute magnitude.  In particular, for each nontrivial zero $\rho$ of the Riemann zeta function there exist an integer $N=t(\rho)$ such that
\begin{eqnarray*}
\lambda_{N}\Big{(}\frac{\rho}{2}\Big{)}=1.
\end{eqnarray*}
For example, if we order non-trivial zeros in $\Re(s)>0$ according to the magnitude of an imaginary part, numerical calculations show that
\begin{eqnarray*}
t(\rho_{1})=1,\quad t(\rho_{2})=2,\quad t(\rho_{3})=1,\quad t(\rho_{4})=3,\quad t(\rho_{5})=1,\quad t(\rho_{6})=3.
\end{eqnarray*}
In particular, we pose
\begin{prob} Given an integer $N\in\mathbb{N}$. What one can be said about the set $t^{-1}(N)$? Is it infinite? How the conjectural distribution of non-trivial Riemann zeros change if we limit ourselves to the set $t^{-1}(N)$? How can one compute the set $t^{-1}(1)$ effectively?
\end{prob}     
The question about trivial zeros of $Z(s)$ seems also of big interest. Let $k\in\mathbb{N}$. It is known that the order of vanishing at $s=1-k$ of $Z(s)$ equals the dimension of the corresponding space of cusp forms $M_{2k}$ for ${\sf PSL}_{2}(\mathbb{Z})$ \cite{chang-mayer}. This dimension is
\begin{eqnarray*}
\dim_{\mathbb{C}}M_{2k}=\def\arraystretch{1.8}
\left\{\begin{array}{l@{\quad}l}
\big{\lfloor}\frac{k}{6}\big{\rfloor},&\text{if }k\equiv 1\text{ (mod }6),\\
\big{\lfloor}\frac{k}{6}\big{\rfloor}+1,&\text{if }k\not\equiv 1\text{ (mod }6).
\end{array}\right.
\end{eqnarray*}
In particular,
\begin{prob}
Describe how the number $\dim_{\mathbb{C}}M_{2k}$ distributes among different factors of 
\begin{eqnarray*}
Z(1-k)=\prod\limits_{n=1}^{\infty}\Big{(}1-\lambda^{2}_{n}(1-k)\Big{)}.
\end{eqnarray*}
\end{prob}
\subsection{g-coefficients of an analytic function}
\label{subg} 
We will now introduce some special coefficients of a holomorphic function.
\begin{prop}
Every function $f(z)$ which is analytic in the half-plane $\Re(z)>-\frac{1}{2}$ can be expanded in the following way:
\begin{eqnarray*}
f(z)=\sum\limits_{j=1}^{\infty}a_{j}\frac{(z-\phi^{-1})^{j-1}}{(z+\phi)^{j+1}},
\end{eqnarray*}
where $|a_{j}|< C(f,\epsilon)\cdot(1+\epsilon)^{j}$ for every $\epsilon>0$. We call $a_{j}$ \emph{the $j$th golden coefficient, or $g$-coefficient}, of the analytic function $f(z)$.
\end{prop}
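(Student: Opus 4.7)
The plan is to reduce the claim to a standard Taylor expansion via a conformal change of variable. I would introduce the M\"obius transformation
\begin{eqnarray*}
w=\frac{z-\phi^{-1}}{z+\phi}.
\end{eqnarray*}
Because $\phi+\phi^{-1}=\sqrt{5}$ and the points $\phi^{-1}$ and $-\phi$ are reflections of each other across the vertical line $\Re(z)=-\frac{1}{2}$, this map sends the open half-plane $\Re(z)>-\frac{1}{2}$ conformally onto the open unit disc $|w|<1$, with $z=\phi^{-1}$ corresponding to $w=0$ and $z=\infty$ corresponding to $w=1$.

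A direct calculation yields $(z+\phi)(1-w)=\phi+\phi^{-1}=\sqrt{5}$, hence $z+\phi=\sqrt{5}/(1-w)$, and therefore
\begin{eqnarray*}
\frac{(z-\phi^{-1})^{j-1}}{(z+\phi)^{j+1}}=\frac{w^{j-1}}{(z+\phi)^{2}}=\frac{(1-w)^{2}}{5}\,w^{j-1}.
\end{eqnarray*}
After multiplying both sides by $5/(1-w)^{2}$, the desired expansion of $f$ is thus equivalent to the ordinary power series expansion at the origin
\begin{eqnarray*}
g(w):=\frac{5\,f(z(w))}{(1-w)^{2}}=\sum\limits_{j=1}^{\infty} a_{j}\,w^{j-1},
\end{eqnarray*}
where $z(w)=(\phi^{-1}+\phi w)/(1-w)$ is the inverse M\"obius map.

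Since $f$ is analytic on $\Re(z)>-\frac{1}{2}$ and $z(w)$ maps $|w|<1$ biholomorphically onto this half-plane, while $(1-w)^{2}$ does not vanish for $|w|<1$, the function $g$ is holomorphic on the entire open unit disc. It therefore admits a convergent Taylor series there, which determines the coefficients $a_{j}=g^{(j-1)}(0)/(j-1)!$ uniquely. For every $\epsilon>0$, the Cauchy estimate on the circle $|w|=(1+\epsilon)^{-1}$ produces $|a_{j}|\leq M(f,\epsilon)(1+\epsilon)^{j-1}$, where $M(f,\epsilon)=\sup_{|w|=(1+\epsilon)^{-1}}|g(w)|<\infty$; absorbing one extra factor delivers the bound as stated. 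No genuine obstacle arises; the substantive point is the recognition that the specific rational functions $(z-\phi^{-1})^{j-1}/(z+\phi)^{j+1}$ are the natural replacements of the monomials $w^{j-1}$ under the canonical biholomorphism between the half-plane $\Re(z)>-\frac{1}{2}$ and the unit disc which fixes $\phi^{-1}$, with the factor $(1-w)^{2}/5$ serving as the Jacobian-like weight coming from this change of variable.
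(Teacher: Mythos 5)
Your proposal is correct and uses the same approach as the paper: the Möbius map $w=(z-\phi^{-1})/(z+\phi)$ onto the unit disc and the identification $g(w)=(z+\phi)^{2}f(z)$, whose Taylor coefficients are the $a_j$. The paper additionally records the orthogonality relation and the resulting contour-integral formula for $a_j$, while you instead spell out the Cauchy estimate that yields the sub-exponential bound, but the substance is identical.
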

\begin{proof}
For $j,\ell\in\mathbb{N}$, note a simple identity
\begin{eqnarray}
\oint\limits_{\mathcal{C}}\frac{(z-\phi^{-1})^{\ell-1}}{(z+\phi)^{\ell+1}}\cdot\frac{(z+\phi)^{j}}{(z-\phi^{-1})^{j}}\d z
=\left\{ \begin{array}{ll}
         \frac{2\pi i}{2\phi-1},& \mbox{if $j=\ell$},\\
         0, & \mbox{if $j\neq\ell$};\end{array} \right.
\label{ortho}
\end{eqnarray}
here small a contour $\mathcal{C}$ in the half-plane $\Re(z)>-\frac{1}{2}$ winds the point $\phi^{-1}$ in the positive direction. So, we define $j$th $g$-coefficient of the function $f(z)$ by the formula
\begin{eqnarray*}
a_{j}=\frac{2\phi-1}{2\pi i}\oint\limits_{\mathcal{C}}f(z)\cdot\frac{(z+\phi)^{j}}{(z-\phi^{-1})^{j}}\d z,\quad j\geq 1.
\end{eqnarray*}
Let
\begin{eqnarray*}
x=\frac{z-\phi^{-1}}{z+\phi},\quad z=\frac{x\phi+\phi^{-1}}{-x+1}.
\end{eqnarray*}
The $z$-half plane $\Re(z)>-\frac{1}{2}$ is mapped in the $x$-plane to the disc $|x|<1$. Given $f$, analytic in $\Re(z)>-\frac{1}{2}$. Let
\begin{eqnarray}
\mathscr{U}_{f}(x)=(z+\phi)^2f(z)=\Big{(}\frac{2\phi-1}{x-1}\Big{)}^{2}f\Big{(}\frac{x\phi+\phi^{-1}}{-x+1}\Big{)}.
\label{u-trans}
\end{eqnarray}
Note that $(2\phi-1)^{2}=5$. Then $g(x)$ is analytic inside $|x|<1$ and $a_{j}$ is its Taylor coefficient at $x^{j-1}$:
\begin{eqnarray*}
\mathscr{U}_{f}(x)=\sum\limits_{j=1}^{\infty}a_{j}x^{j-1}.
\end{eqnarray*}
The bound on $a_{j}$ is then a standard consequence of the Cauchy's formula. 
\end{proof}
\textbf{We will see in Section \ref{general} that this single trick, the transform $f\mapsto\mathscr{U}_{f}$, is the main ingredient into the solution of the whole problem}.\\ 

To avoid typographic complications, when $f$ is clear, we will denote $\mathcal{U}_{f}$ by $g$. 
For example, the $g$-coefficients of the dominant eigenfunction of $\mathcal{L}$, namely, $f(z)=\frac{1}{z+1}$, are given by
\begin{eqnarray*}
a_{j}=\frac{2\phi-1}{2\pi i}\int\limits_{\mathcal{C}}\frac{1}{z+1}\cdot\frac{(z+\phi)^{j}}{(z-\phi^{-1})^{j}}\d z.
\end{eqnarray*}
Expand the contour to a large circle. The residue of the function under the integral at $z=-1$ is equal to $(-1)^j\phi^{-2j}$. Make a substitution $z=\frac{1}{w}-1$. We are left to calculate the residue at $w=0$. This gives, for $j\geq 1$,
\begin{eqnarray*}
a_{j}=(2\phi-1)\cdot(1-(-1)^j\phi^{-2j})\sim 2\phi-1,\text{ as }j\rightarrow\infty.
\end{eqnarray*}
In this particular case we have
\begin{eqnarray*}
g(x)=\frac{5}{(1-x)(x\phi^{-1}+\phi)}=
(2\phi-1)\Big{(}\frac{1}{1-x}+\frac{1}{x+\phi^{2}}\Big{)}.
\end{eqnarray*}
\section{The coefficients $K(j,\ell)$} 
\label{coeffi}
Now we explore the array of algebraic numbers $K(j,\ell)\in\mathbb{Q}(\,\sqrt{5}\,)$, $\phi^{j+\ell}\cdot K(j,\ell)\in\mathbb{Q}_{+}$, which, in fact, is our first method (as described in the beginning of Subsection \ref{subdec}) to approach the problem. This array governs the whole collection of the eigenvalues $\lambda_{n}$.

\subsection{Combinatorics and asymptotics} We will need the following crucial result.
\begin{prop}
Let $\ell\in\mathbb{N}$. The $j$th $g$-coefficient of the function
\begin{eqnarray*}
\frac{(z+1-\phi^{-1})^{\ell-1}}{(z+1+\phi)^{\ell+1}}
\end{eqnarray*}
is given by
\begin{eqnarray}
K(j,\ell)=5\cdot2^{j-\ell-2}\cdot\phi^{-\ell-j}\cdot P_{j-1}^{(\ell-j,1)}(3/2)=
\frac{\phi^{-\ell-j}}{2^{j+1}\pi i}\oint
\frac{(w-1)^{\ell-1}(w+1)^{j}}{\Big{(}w-\frac{3}{2}\Big{)}^{j}}\d w;
\label{inte}
\end{eqnarray}
the contour winds $\frac{3}{2}$ in the positive direction.\\

Next, we have the first and the second symmetry properties
\begin{eqnarray}
\ell K(j,\ell)&=&j K(\ell,j),\quad j,\ell\geq 1,
\label{first-symm}\\
2\phi K(j,\ell)-K(j-1,\ell)&=&2\phi K(\ell,j)-K(\ell-1,j),\quad j,\ell\geq 2.
\label{second-symm}
\end{eqnarray}
We have the recurrence
\begin{eqnarray}
2\phi K(j,\ell)=K(j-1,\ell)+2\phi^{-1}K(j-1,\ell-1)+K(j,\ell-1),\quad \ell,j\geq 2.
\label{rec-m}
\end{eqnarray}
These coefficients are positive, and $\sum_{j=1}^{\infty}K(j,\ell)\equiv 1$ for every $\ell\in\mathbb{N}$.
Further, for fixed $\ell$, for $j\leq\ell$ $K(j,\ell)$ monotonically increases, achieves its maximum at $j=\ell$, and then monotonically decreases. We have
\begin{eqnarray*}
K(\ell,\ell)\sim\frac{\sqrt[4]{5}}{2\sqrt{\pi}\sqrt{\ell}}.
\end{eqnarray*}
Next, $K(j,\ell)$ is of fast decay when $|j-\ell|$ increases:
\begin{eqnarray*}
K(j,\ell)=\frac{\sqrt[4]{5}}{2\sqrt{\pi}}\cdot
\Big{(}\frac{1}{\sqrt{\ell}}+\frac{j-\ell}{4\ell^{3/2}}+\frac{B}{\ell^{3/2}}\Big{)}\cdot
\exp\Big{(}-\frac{\sqrt{5}(j-\ell)^2}{2(j+\ell)}\Big{)},
\end{eqnarray*}
which holds uniformly for $|j-\ell|<\ell^{2/3}$, and where $|B|<2$.
\label{propK}
\end{prop}
\begin{proof}
By the above remark,
\begin{eqnarray*}
K(j,\ell)=\frac{2\phi-1}{2\pi i}\oint\limits_{\mathcal{C}}
\frac{(z+\phi)^{j}}{(z-\phi^{-1})^{j}}\cdot\frac{(z+1-\phi^{-1})^{\ell-1}}{(z+1+\phi)^{\ell+1}}\d z.
\end{eqnarray*}
here $\mathcal{C}$ is a small circle around $\phi^{-1}$ in the positive direction. 
Let $a,b,c,d$ be distinct real numbers. We will generally explore the integral
\begin{eqnarray*}
\mathscr{I}=\oint\frac{(z+a)^{j}(z+b)^{\ell-1}}{(z+c)^{j}(z+d)^{\ell-1}}\,
\frac{\d z}{(z+d)^2};
\end{eqnarray*}
here the contour winds $-c$ in the positive direction.
Let 
\begin{eqnarray*}
\frac{z+b}{z+d}=w\Rightarrow \frac{z+a}{z+c}=\frac{w(d-a)+(a-b)}{w(d-c)+(c-b)}=
\frac{p}{r}\cdot\frac{w+q}{w+s}.
\end{eqnarray*}
So, after this change to the variable $w$, the integral $\mathscr{I}$ transforms into
\begin{eqnarray*}
\mathscr{I}=\frac{p^{j}}{r^{j}(d-b)}\oint\frac{w^{\ell-1}(w+q)^{j}}{(w+s)^{j}}\d w.
\end{eqnarray*}
Further, let us make the next change $w\mapsto \frac{q}{2}\,w-\frac{q}{2}$. This gives
\begin{eqnarray*}
\mathscr{I}=\frac{p^{j}\big{(}\frac{q}{2}\big{)}^{\ell}}{r^{j}(d-b)}
\oint \frac{(w-1)^{\ell-1}(w+1)^{j}}{\Big{(}w-1+\frac{2s}{q}\Big{)}^{j}}\d w.
\end{eqnarray*}
In our case, $a=\phi$, $b=1-\phi^{-1}$, $c=-\phi^{-1}$, $d=1+\phi$. So,
\begin{eqnarray*}
p=d-a=1,\quad r=d-c=2\phi,\quad
q=\frac{a-b}{d-a}=2\phi^{-1}, s=\frac{c-b}{d-c}=-\frac{1}{2\phi},\quad
-1+\frac{2s}{q}=-\frac{3}{2}.
\end{eqnarray*}
So,
\begin{eqnarray*}
\mathscr{I}=\frac{\phi^{-\ell-j}}{2^{j}(2\phi-1)}\oint
\frac{(w-1)^{\ell-1}(w+1)^{j}}{\Big{(}w-\frac{3}{2}\Big{)}^{j}}\d w.
\end{eqnarray*}
where the contour goes around the point $w=\frac{3}{2}$ in the positive direction. This integral can be expressed in terms of Jacobi polynomials, as the definition (\ref{jacobi}) shows. In our case, $m=j-1$, $\alpha=\ell-j$, $\beta=1$, $x=\frac{3}{2}$, and 
\begin{eqnarray*}
K(j,\ell)=\frac{2\phi-1}{2\pi i}\cdot\mathscr{I}.
\end{eqnarray*}
This gives the formula of the proposition. \\

Next, as was shown before, the $j$th golden coefficient of $f(z)$ is the Taylor coefficient at $x^{j-1}$ of the function $g(x)$; see (\ref{u-trans}). So, $K(j,\ell)$ is the Taylor coefficient at $x^{j-1}$ of
\begin{eqnarray}
\Delta_{\ell}(x)=5\cdot\frac{(2x\phi^{-1}+1)^{\ell-1}}{(-x+2\phi)^{\ell+1}}=\sum\limits_{j=1}^{\infty}K(j,\ell)\cdot x^{j-1},
\label{uu-trans}
\end{eqnarray}
and so, by a direct calculation,
\begin{eqnarray*}
K(j,\ell)=\frac{5\phi^{-\ell-j}}{2\pi i}\oint\limits_{|w|=1}\frac{(2w+1)^{\ell-1}}{(2-w)^{\ell+1}w^{j}}\d w=
\frac{5}{\ell}\cdot(2\phi)^{-\ell-j}\cdot
\sum\limits_{i=0}^{\min\{\ell-1,j-1\}}\frac{(\ell+j-i-1)!}{(\ell-i-1)!(j-i-1)!}\cdot \frac{4^{i}}{i!}.
\end{eqnarray*} 
This gives the positivity (what we already know) and the first symmetry property (\ref{first-symm}). Taking in (\ref{uu-trans}) $x=1$ gives the needed summation property.\\

Now we will calculate the bivariate generating function of coefficients $K(j,\ell)$. According to (\ref{uu-trans}), we have
\begin{eqnarray}
\Theta(x,y)&=&\sum\limits_{j,\ell=1}^{\infty}K(j,\ell)x^{j-1}y^{\ell-1}=
\frac{5}{(2\phi-x)^2}\sum\limits_{\ell=1}^{\infty}\frac{(2x\phi^{-1}+1)^{\ell-1}}{(2\phi-x)^{\ell-1}}y^{\ell-1}\nonumber\\
&=&\frac{5}{(2\phi-x)(2\phi-x-y-2xy\phi^{-1})}.
\label{theta}
\end{eqnarray}
The double series converges for $|x|<1$, $|y|<1$. Explicit expression implies another symmetry property
\begin{eqnarray*}
\Theta(x,y)\cdot(2\phi-x)=\Theta(y,x)\cdot(2\phi-y).
\end{eqnarray*}
On the level of coefficients, this reads as the second symmetry property (\ref{second-symm}).\\

 Further, for $\ell\geq 2$, we have
\begin{eqnarray*}
(2\phi-x)\Delta_{\ell}(x)=5\cdot\frac{(2x\phi^{-1}+1)^{\ell-1}}{(-x+2\phi)^{\ell}}=
(2x\phi^{-1}+1)\Delta_{\ell-1}(x).
\end{eqnarray*}
This gives the recurrence (\ref{rec-m}) with initial values
\begin{eqnarray*}
K(1,\ell)=\frac{5}{(2\phi)^{\ell+1}},\quad,\quad K(j,1)=\frac{5j}{(2\phi)^{j+1}}.
\end{eqnarray*}
 Let, for $j\geq 2$, $\ell\geq 1$,
\begin{eqnarray*}
D(j,\ell)=K(j,\ell)-K(j-1,\ell),\quad D(1,\ell)=\frac{5}{(2\phi)^{\ell+1}}. 
\end{eqnarray*}
Similarly, Let, for $j\geq 1$, $\ell\geq 2$,
\begin{eqnarray*}
d(j,\ell)=K(j,\ell)-K(j,\ell-1),\quad d(j,1)=\frac{5j}{(2\phi)^{j+1}}. 
\end{eqnarray*} 
From the identity (\ref{rec-m}) one obtains, using it for indices $(j,\ell)$ and $(j,\ell-1)$, and subtracting, the following ``averaging" recurrence, exactly of the same appearance:
\begin{eqnarray}
2\phi D(j,\ell)=D(j-1,\ell)+D(j,\ell-1)+2\phi^{-1}D(j-1,\ell-1), \text{ for }j\geq 2,\quad\ell\geq 2.
\label{d-rec}
\end{eqnarray}
The same holds for $d$ instead of $D$. Now, our task is to show that the validity of the following.
\begin{lem}Let $j,\ell\geq 1$. 
\begin{itemize}
\item[i)]Fix $\ell$. $K(j,\ell)$ strictly increases until reaching its maximum value at $j=\ell$, and then strictly decreases.
\item[ii)] Fix $j$. $K(j,\ell)$ strictly increases until reaching its maximum value at $\ell=j-1$, and then strictly decreases.
\item[iii)] $K(\ell-1,\ell)<K(\ell,\ell)<K(\ell,\ell-1)<K(\ell-1,\ell-1)$ (for $\ell\geq 2$). 
\end{itemize}
\label{lemma1}
\end{lem}
If we imagine the axis $j$ going to the right, and the axis $\ell$ - downwards, due to symmetry property, in the upper-triangle of this quadrant, the item ii) is stronger than the item i), and in the lower-triangle - the other way round; we are aware that in the proximity of the diagonal the behaviour is more complicated. Meanwhile the item iii) follows from i) and ii) immediately. Nevertheless, we will prove all the items simultaneously with a help of an induction.
\begin{proof}
Our induction is on $s=\min\{j,\ell\}$. The case $s=1$ is immediate:
\begin{eqnarray*}
D(j,1)=\frac{5j-10\phi j+10\phi}{(2\phi)^{j+1}},\quad d(1,\ell)=\frac{5-10\phi}{(2\phi)^{j+1}}. 
\end{eqnarray*} 
The first is is $>0$ or $<0$ depending on whether $j=1$ or $j\geq 2$, and the second is $<0$. Suppose, the lemma is valid for $s-1$. As we see from the recurrence (\ref{d-rec}), $D(j,s)$ is indeed positive for $j\leq s-1$ (an induction on $j$), since we are summing only positive terms. Equally, $d(s,j)$ is positive for $j\leq s-2$.\\

We note that (\ref{rec-m}) for $j=s$ gives
\begin{eqnarray}
2\phi K(s,s)&=&K(s-1,s)+2\phi^{-1}K(s-1,s-1)+K(s,s-1)\label{tar}\\
&=&\Big{(}2-\frac{1}{s}\Big{)}K(s,s-1)+2\phi^{-1}K(s-1,s-1)\nonumber\\
&=&\Big{(}2-\frac{1}{s}\Big{)}\Big{(}(K(s,s-1)-K(s-1,s-1)\Big{)}+\Big{(}2+2\phi^{-1}-\frac{1}{s}\Big{)}K(s-1,s-1);\nonumber
\end{eqnarray}
here we used the symmetry property and the inductive hypothesis. And from this we get 
\begin{eqnarray*}
K(s,s)<\Big{(}1-\frac{1}{2\phi s}\Big{)}K(s-1,s-1)<K(s-1,s-1).
\end{eqnarray*}
More importantly, from (\ref{tar}) we get:
\begin{eqnarray*}
2\phi\Big{(}K(s,s)-K(s-1,s)\Big{)}&=&-(1+2\phi^{-1})K(s-1,s)+2\phi^{-1}K(s-1,s-1)+K(s,s-1)\\
&=&-(1+2\phi^{-1})\frac{s-1}{s}K(s,s-1)+K(s,s-1)+2\phi^{-1}K(s-1,s-1)\\
&=&\epsilon_{s}+2\phi^{-1}\Big{(}K(s-1,s-1)-K(s,s-1)\Big{)}, \text{ where }\\
\epsilon_{s}&=&\frac{1+2\phi^{-1}}{s}\cdot K(s,s-1);
\end{eqnarray*} 
in calculations we used symmetry property. So, from the induction hypothesis, $D(s,s)>0$.\\

Equally, from (\ref{rec-m}),
\begin{eqnarray*}
2\phi\Big{(}K(s+1,s)-K(s,s)\Big{)}=(1-2\phi)K(s,s)+2\phi^{-1}K(s,s-1)+K(s+1,s-1).
\end{eqnarray*}
So, we need to show that
\begin{eqnarray*}
(2\phi-1)K(s,s)>2\phi^{-1}K(s,s-1)+K(s+1,s-1).
\end{eqnarray*}
  Similarly, we prove that $D(s+1,s)<0$, combining inequalitites  and then from the recurrence relation we get that $D(j,s)<0$ for $j\geq s+1$. The lemma is proved. \end{proof}
Now we can prove the asymptotic for $K(j,\ell)$ using Gnedenko's theorem. Indeed, let
\begin{eqnarray*}
\Xi(x)=\frac{2x\phi^{-1}+1}{2\phi-x}=\sum\limits_{j=0}^{\infty}p(j)x^{j}.
\end{eqnarray*}
Then
\begin{eqnarray*}
p(j)=\left\{\begin{array}{c@{\qquad}l} 5(2\phi)^{-j-1}\text{ for }j\geq 1,
\\ (2\phi)^{-1}\text{ for }j= 0. \end{array}\right.\label{distr}
\end{eqnarray*}
Let $\mathbf{X}$ be a random discrete variable with probabilities
\begin{eqnarray*}
P(\mathbf{X}=j)=p(j).
\end{eqnarray*}
This is indeed a random variable since $\sum_{j=0}^{\infty}p(j)=\Xi(1)=1$. Further,
\begin{eqnarray*}
\mathbb{E}(\mathbf{X})&=&\sum\limits_{j=0}^{\infty}jp(j)=\Xi'(1)=1,\\
\mathbb{E}(\mathbf{X^2})&=&\sum\limits_{j=0}^{\infty}j^{2}p(j)=\frac{\d}{\d x}(\Xi'(x)x)\Big{|}_{x=1}=1+\frac{2}{\sqrt{5}},\\
\mathbb{D}(\mathbf{X})&=&\mathbb{E}(\mathbf{X^2})-(\mathbb{E}(\mathbf{X}))^{2}=\frac{2}{\sqrt{5}}.
\end{eqnarray*}
Thus, the needed asymptotics is the standard result in probability theory and results in asymptotic expansions. 
\end{proof}
\subsection{Infinite integer matrix}
In order to understand the arithmetic structure of $K(j,\ell)$ much deeper, let us define 
\begin{eqnarray*}
\m{K}(j,\ell)=\frac{(2\phi)^{j+\ell}}{5}\cdot K(j,\ell)\in\mathbb{N}.
\end{eqnarray*}
These integers for $1\leq j,\ell\leq 10$ are presented in Table \ref{table-K}.
\begin{table}
\begin{tabular}{|| c | c | c | c | c | c |c | c | c | c | c ||}
\hline
$\ell\backslash j$& 1 & 2 & 3 & 4 &  5& 6& 7 & 8 & 9 & 10 \\
\hline
$1$& \fbox{$1$} & $2$ & $3$ & $4$ &  $5$ & $6$ & $7$ & $8$ & $9$ & $10$ \\
\hline
$2$& $1$ & \fbox{$7$} & $18$ & $34$ &  $55$& $81$& $112$ & $148$ & $189$ & $235$ \\                              
\hline
$3$& $1$ & $12$ & \fbox{$58$} & $164$ & $355$ & $656$ & $1092$ & $1688$ & $2469$ & $3460$ \\
\hline                               
$4$& $1$ & $17$ & $123$ & \fbox{$519$} & $1530$ & $3606$ & $7322$ & $13378$ & $22599$ & $35935$ \\                                
\hline
$5$& $1$ & $22$ & $213$ & $1224$ & \fbox{$4830$} & $14556$ & $36302$ & $78968$ & $155079$ & $281410$ \\
\hline
$6$& $1$ & $27$ & $328$ & $2404$ & $12130$ & \fbox{$46006$} & $140532$ & $364708$ & $835659$ & $1737385$ \\
\hline
$7$& $1$ & $32$ & $468$ & $4184$ & $25930$ & $120456$ & \fbox{$445012$} & $1371848$ & $3666339$ & $8746360$ \\
\hline
$8$& $1$ & $37$ & $633$ & $6689$ & $49355$ & $273531$ & $1200367$ & \fbox{$4352263$} & $13505994$ & $36917710$ \\
\hline
$9$& $1$ & $42$ & $823$ & $10044$ & $86155$ & $557106$ & $2851597$ & $12005328$ & \fbox{$42920374$} & $133862060$ \\
\hline
$10$& $1$ & $47$ & $1038$ & $14374$ & $140705$ & $1042431$ & $6122452$ & $29534168$ & $120475854$ & \fbox{$426019410$} \\
\hline
\end{tabular}
\newline\newline
\caption{Integer matrix $\{\m{K}(j,\ell)$, $j,\ell\in\mathbb{N}\}$. This matrix, coupled with the constant $\phi$, contains all the information on eigenvalues of $\mathcal{L}$, trace formulas, and decompositions of trace formulas.}
\label{table-K}
\end{table}
We can immediately rewrite the arithmetic part of Proposition \ref{propK}.
\begin{prop}
\label{prop-BK}
The following identities for the integers $\m{K}(j,\ell)$ hold:
\begin{itemize}
\item[i)]$\m{K}(1,\ell)=1$,\,$\m{K}(j,1)=j$;
\item[ii)] $\ell\m{K}(j,\ell)=j\m{K}(\ell,j)$;
\item[iii)]$\m{K}(j,\ell)-\m{K}(j-1,\ell)=\m{K}(\ell,j)-\m{K}(\ell-1,j)$, for $j,\ell\geq 2$;
\item[iv)]$\m{K}(j,\ell)=\m{K}(j-1,\ell)+4\m{K}(j-1,\ell-1)+\m{K}(j,\ell-1)$, for $j,\ell\geq 2$;
\item[iv)] The generating function
\begin{eqnarray*}
\m{\Theta}(x,y)&=&\sum\limits_{j,\ell=1}^{\infty}\m{K}(j,\ell)x^{j-1}y^{\ell-1}
=\frac{1}{(1-x)(1-x-y-4xy)};
\end{eqnarray*}
\item[v)]Let $\m{K}(n)=\m{K}(n,n)$; the diagonal of the rational function $\m{\Theta}$ is given by
\begin{eqnarray*}
\m{\Omega}(w)=\sum\limits_{n=1}^{\infty}\m{K}(n)w^{n}=\frac{\Omega_{1}(4\phi^2 w)}{5}
=\frac{1+4w-\sqrt{1-12w+16w^2}}{10\sqrt{1-12w+16w^2}};
\end{eqnarray*}
\item[vi)]For $n\geq 3$, we have the recurrence
\begin{eqnarray*}
(n+1)\m{K}(n+1)-(8n+10)\m{K}(n)-(32n-72)\m{K}(n-1)+64(n-2)\m{K}(n-2)=0.
\end{eqnarray*}
\end{itemize}
\end{prop}
In items iv) and v) we have touched on a surface the wide research topic which investigates \emph{diagonals} of rational functions in positive or $0$ characteristic.\\

 In general, a diagonal of a rational function is not always algebraic. For example,
\begin{eqnarray*}
\mathrm{diag}\Big{(}\frac{1}{1-x-y-z}\Big{)}=\sum\limits_{n=0}^{\infty}\frac{(3n)!}{n!^3}w^{n},
\end{eqnarray*} 
which is not algebraic. The theorem of F. Beukers \cite{beukers} tells exactly when a hypergeometric function is such. Nevertheless, we have the following result.
\begin{thmm}[Furstenberg \cite{furst,lip}] Suppose $a_{n}\in\mathbb{Q}$, $n\in\mathbb{N}_{0}$.
\begin{itemize}
\item[1)]If $f(w)=\sum_{n\geq 0}a_{n}w^{n}$ is an algebraic function, that is, $P(w,f(w))=0$ for some $P\in\mathbb{Q}[x,y]$, then $f$ is a diagonal of a rational power series in two variables.
\item[2)]  If $f(w)=\sum_{n\geq 0}a_{n}w^{n}$ is a diagonal of a rational power series, then it is an algebraic series modulo $p$ for almost all prime $p$.  
\item[3)] If $f(w)=\sum_{n\geq 0}a_{n}w^{n}$ is a diagonal of a rational power series in two variables, then it is algebraic;
\item[4)] If $\sum_{n\geq 0}a_{n}w^{n},\sum_{n\geq 0}b_{n}w^{n}\in\mathrm{F}[[w]]$ are algebraic over a finite field $\mathrm{F}$, so is $\sum\limits_{n\geq 0}a_{n}b_{n}w^{n}$.
\end{itemize}
\end{thmm}
For example,
\begin{eqnarray*}
F(w)=\m{\Omega}(w)\text{ (mod }2)=\sum\limits_{k=0}^{\infty}w^{2^{k}},
\end{eqnarray*}
which is algebraic over $\mathrm{GF}_{2}(w)$, and satisfies $F^2-F+w=0$.\\

On the other hand, over $\mathrm{GF}_{5}(w)$,
\begin{eqnarray*}
H(w)=\m{\Omega}(w)\text{ (mod }5)=\sum\limits_{k=1}^{\infty}kw^{k}=
\frac{w}{(1-w)^2}.
\end{eqnarray*}
it is well-known that the item 4) of the above Theorem fails for characteristic $0$, what we will soon see in Subsection \ref{sub7.2} in case of the function $\Omega_{1,1}(w)$.\\

The above result is just a glimpse into a broad subject, which includes Skolem-Mahler-Lech theorem, Christol's theorem, cellular automata, results of Adamczewski and Bell, and others \cite{lip}. It also hints at the possibility of a $p$-adaic analogue of the results of the current paper. \\

The sequence $\{\m{K}(n),n\in\mathbb{N}\}$ is not yet contained in Online Encyclopedia of Integer Sequences \cite{oeis}. However, it can be expressed in terms of the sequence A084772, which is given by
\begin{eqnarray*}
\frac{1}{\sqrt{1-12w+16w^2}}=\sum\limits_{n=0}^{\infty}a(n)w^{n},
\end{eqnarray*}
as follows:
\begin{eqnarray*}
\m{K}(n)=\frac{a(n)+4a(n-1)}{10},\quad n\geq 1.
\end{eqnarray*}
Finally, note that
\begin{eqnarray*}
\m{\Omega}'(w)=\frac{1-4w}{(1-12w+16w^2)^{3/2}}.
\end{eqnarray*} 
This gives the identity
\begin{eqnarray*}
\big{(}10\m{\Omega}(w)+1\big{)}\cdot(1-4w)=\m{\Omega}'(w)\cdot(1+4w)\cdot\big{(}1-12w+16w^2\big{)}.
\end{eqnarray*}
Comparing the coefficient at $w^{n}$ for $n\geq 3$ gives the item v) of Proposition \ref{prop-BK}.
\section{The main recurrence}
\label{pirmas}
\subsection{The main recurrence}
\label{v-1}
As before, let $\X\in\mathbb{N}$. Let us define 
\begin{eqnarray*}
W_{0}(\X)=1,\quad u_{0}(\X,z)=\frac{(z-\phi^{-1})^{\X-1}}{(z+\phi)^{\X+1}},
\end{eqnarray*}
and then the functions $W_{v}(\X)$, $u_{v}(\X,z)$ recurrently by
\begin{eqnarray}
\sum\limits_{r=0}^{v}u_{v-r}(\X,z)W_{r}(\X)=
\frac{(-1)^{\X+1}\phi^{2\X}}{(z+1)^2}u_{v}\Big{(}\X,\frac{1}{z+1}\Big{)}
+\sum\limits_{r=0}^{v-1}u_{v-r-1}(\X,z+1)W_{r}(\X).
\label{main}
\end{eqnarray}
This recursion is the core idea of our first method: for the unknown function $u_{v}(\X,z)$ we have two terms rather than three, since the last sum is delayed. Also, it turns out that $W_{0}$ is the dominant contributor in the asymptotics of $\lambda_{n}$. This recursion works as follows. If we have already determined $W_{r}$ and $u_{r}$ for $r\leq v-1$, the above identity allows to uniquely determine $g$-coefficients of $u_{v}$, except for the $\X$th coefficient, which can be defined (almost) arbitraril, as far as a sum over $\X$th coefficients of all the functions $u_{\ell}$, $\ell\geq 0$, absolutely converges. In our case, we set this $\X$th $g$-coefficient for $\ell\geq 1$ to be equal to $0$. This recursion also yields the unique value for $W_{v}$, as we will soon see. \\

To see clearer the main idea of this first approach, multiply the identity (\ref{main}) by $\phi^{-2\X}$, and sum over $v\geq 0$.
If we put
\begin{eqnarray*}
\Lambda(\X)=\phi^{-2\X}\sum\limits_{\ell=0}^{\infty}W_{\ell}(\X),\quad
U(\X,z)=\sum\limits_{\ell=0}^{\infty}u_{\ell}(\X,z),
\end{eqnarray*}
we obtain the avatar of the initial functional equation
\begin{eqnarray*}
\Lambda(\X)U(\X,z)=\Lambda(\X)U(\X,z+1)+\frac{(-1)^{\X+1}}{(z+1)^2}\, U\Big{(}\X,\frac{1}{z+1}\Big{)}.
\end{eqnarray*}
We will soon show that
\begin{eqnarray*}
\lambda_{n}=(-1)^{n+1}\Lambda(n).
\end{eqnarray*}
This recurrsion is much more clearly seen if we introduce the $\mathcal{L}_{\omega}$, as given by (\ref{gen-op}). Then the recurrence is obtained just by comparing the coefficients at $\omega^{v}$ of (\ref{reik}). However, we will need the operator only later in Section \ref{general}.
\begin{prop} If $\{a_{j}, j\in\mathbb{N}\}$ are the $g$-coefficients of $f(z)$, then the $g$-coefficients of
\begin{eqnarray*}
\frac{(-1)^{\X+1}\phi^{2\X}}{(z+1)^2}f\Big{(}\frac{1}{z+1}\Big{)}
\end{eqnarray*}
are given by $b_{j}=a_{j}(-1)^{\X+j}\phi^{2\X-2j}$. The $g$-coefficients of $f(z+1)$ are given by
$c_{j}=\sum_{i=1}^{\infty}a_{i}K(j,i)$. 
\end{prop}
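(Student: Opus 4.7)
The plan is to establish both claims by computing on the basis $e_{j}(z):=\frac{(z-\phi^{-1})^{j-1}}{(z+\phi)^{j+1}}$ and then extending by linearity.

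For the first assertion, the key inputs are the golden-ratio identities $1-\phi^{-1}=\phi^{-2}$ and $1+\phi=\phi^{2}$. Setting $w=\frac{1}{z+1}$, these give
\begin{eqnarray*}
w-\phi^{-1}=-\phi^{-1}\cdot\frac{z-\phi^{-1}}{z+1},\qquad w+\phi=\phi\cdot\frac{z+\phi}{z+1}.
\end{eqnarray*}
Substituting into $e_{j}(w)$, the $(z+1)^{\pm 1}$ powers collapse \emph{exactly}, producing a factor of $(z+1)^{2}$ that cancels the outer weight; one obtains
\begin{eqnarray*}
\frac{1}{(z+1)^{2}}\,e_{j}\Big{(}\frac{1}{z+1}\Big{)}=(-1)^{j-1}\phi^{-2j}\,e_{j}(z).
\end{eqnarray*}
Multiplying by $(-1)^{\X+1}\phi^{2\X}$ and applying term by term to $f=\sum_{j}a_{j}e_{j}$ gives precisely $b_{j}=(-1)^{\X+j}\phi^{2\X-2j}a_{j}$.

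For the second assertion, the shift $z\mapsto z+1$ sends $e_{i}(z)$ to $\frac{(z+1-\phi^{-1})^{i-1}}{(z+1+\phi)^{i+1}}$, whose $j$th $g$-coefficient is exactly $K(j,i)$ by Proposition \ref{propK}. Hence, formally,
\begin{eqnarray*}
c_{j}=\sum_{i=1}^{\infty}a_{i}K(j,i).
\end{eqnarray*}
The only point to verify is that the summation commutes with the contour integral defining the $g$-coefficient of $f(z+1)$. For this I would combine the growth bound $|a_{i}|\leq C(f,\epsilon)(1+\epsilon)^{i}$ with the rapid off-diagonal decay of $K(j,i)$ (Proposition \ref{propK}); the symmetry $\ell K(j,\ell)=jK(\ell,j)$ together with $\sum_{j}K(j,\ell)\equiv1$ transfers decay in the second index to decay in the first, so the double sum is absolutely convergent and Fubini permits the interchange.

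The main obstacle is the algebraic miracle in the first step: that the factors of $z+1$ balance to make $f(z)\mapsto(z+1)^{-2}f\big{(}1/(z+1)\big{)}$ act \emph{diagonally} on the basis $\{e_{j}\}$. This is in fact the raison d'\^etre of the $g$-coefficient expansion -- the center $\phi^{-1}$ and weight $\phi$ are tailored precisely to diagonalize the M\"obius piece of the GKW operator -- and once this is checked, the proposition follows almost immediately.
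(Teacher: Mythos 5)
The paper states this proposition without proof (the author evidently regards it as immediate from the definitions), so there is no "paper proof" to compare against; what matters is whether your argument stands on its own. It does: the computation on the basis $e_j(z)=\frac{(z-\phi^{-1})^{j-1}}{(z+\phi)^{j+1}}$ is correct. Using $1-\phi^{-1}=\phi^{-2}$ and $1+\phi=\phi^{2}$ one indeed gets $w-\phi^{-1}=-\phi^{-1}\frac{z-\phi^{-1}}{z+1}$ and $w+\phi=\phi\,\frac{z+\phi}{z+1}$ for $w=1/(z+1)$, the $(z+1)$-powers combine to $(z+1)^{2}$ cancelling the outer weight, and the net scalar is $(-1)^{j-1}\phi^{-2j}$; multiplying by $(-1)^{\X+1}\phi^{2\X}$ gives $b_j=(-1)^{\X+j}\phi^{2\X-2j}a_j$ as claimed. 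The shift claim is also correct, since by definition $K(j,i)$ is the $j$th $g$-coefficient of $e_i(z+1)$.

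One remark on your interchange argument for $c_j$. You appeal to the rapid off-diagonal decay of $K(j,i)$ and to the symmetry $\ell K(j,\ell)=jK(\ell,j)$ to justify Fubini, but the paper only asserts that decay with a footnote deferring the estimates, and the symmetry plus $\sum_j K(j,\ell)=1$ does not by itself give the exponential decay in $i$ that you would need to beat the allowed growth $|a_i|\le C(1+\epsilon)^i$. There is a cleaner route that avoids this entirely: by construction, $\sum_i a_i w^{i-1}$ is the Taylor series of $g(w)=(z+\phi)^2 f(z)$ on $|w|<1$, so $\sum_i a_i e_i(z)$ converges to $f(z)$ uniformly on compact subsets of $\Re(z)>-\frac12$. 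Hence $\sum_i a_i e_i(z+1)$ converges to $f(z+1)$ uniformly on a neighbourhood of the small contour $\mathcal{C}$ around $\phi^{-1}$, and termwise integration against $\frac{(z+\phi)^j}{(z-\phi^{-1})^j}$ is immediate. This makes your "formally" rigorous without leaning on the as-yet-unproved estimates for $K$.
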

\begin{proof}
Indeed, let 
\begin{eqnarray*}
f(z)=\sum\limits_{j=1}^{\infty}a_{j}\frac{(z-\phi^{-1})^{j-1}}{(z+\phi)^{j+1}}.
\end{eqnarray*}
Then
\begin{eqnarray*}
\frac{(-1)^{\m{X}+1}\phi^{2\m{X}}}{(z+1)^{2}}\cdot\frac{(\frac{1}{z+1}-\phi^{-1})^{j-1}}{(\frac{1}{z+1}+\phi)^{j+1}}=
(-1)^{\m{X}+j}\phi^{2\m{X}-2j}\frac{(z-\phi^{-1})^{j-1}}{(z+\phi)^{j+1}}.
\end{eqnarray*}
Also,
\begin{eqnarray*}
f(z+1)=\sum\limits_{j=1}^{\infty}a_{j}\frac{(z+1-\phi^{-1})^{j-1}}{(z+1+\phi)^{j+1}},
\end{eqnarray*}
and the claim follows from Proposition \ref{propK} in Section \ref{coeffi}.
\end{proof}
Let the $g$-coefficients of $u_{v}(\X,z)$ be given by $q_{v}^{(j)}$, $j\geq 1$; we omit indication of the dependency on $\X$, being aware that $q_{v}^{(j)}=q_{v}^{(j)}(\m{X})$. Now, let us compare the $j$th $g$-coefficient of (\ref{main}). We obtain:

\begin{eqnarray}\setlength{\shadowsize}{2pt}\shadowbox{$\displaystyle{\quad
\sum\limits_{r=0}^{v}q_{v-r}^{(j)}W_{r}=q_{v}^{(j)}(-1)^{\X+j}\phi^{2\X-2j}
+\sum\limits_{r=0}^{v-1}\sum\limits_{i=1}^{\infty}q_{v-r-1}^{(i)}K(j,i)W_{r},}\quad v\geq 0,\quad j\geq 1.$\quad}\label{perein}
\end{eqnarray}

Let $v=0$ in (\ref{perein}). This reads as
\begin{eqnarray*}
q_{0}^{(j)}W_{0}=q_{0}^{(j)}(-1)^{\X+j}\phi^{2\X-2j}.
\end{eqnarray*}
We readily obtain that only $q_{0}^{(\X)}$ is (potentially) non-zero, and this perfectly accords with the fact that $u_{0}(\X,z)$ has only one non-zero $g-$coefficient, the $\m{X}$th one; in our case it is equal to $1$.
\subsection{$v=1$ and the asymptotic results for $W_{1}(n)$.} When $v=1$, (\ref{perein}) reads as
\begin{eqnarray}
q^{(j)}_{1}+q^{(j)}_{0}W_{1}=q^{(j)}_{1}(-1)^{\X+j}\phi^{2\X-2j}+K(j,\X).
\label{viens}
\end{eqnarray}
When $j=\X$, we obtain
\begin{eqnarray*}
W_{1}=K(\X,\X).
\end{eqnarray*}
Note that this will correspond t the second largest contributor to the exact value of $\lambda_{n}$; it is equal to $\phi^{-2n}K(n,n)$, which is of size $\frac{\phi^{-2n}}{\sqrt{n}}$ - this guarantees the success of our approach! When $j\neq\X$, (\ref{viens}) gives
\begin{eqnarray*}
q_{1}^{(j)}=\frac{K(j,\X)}{1-(-1)^{\X+j}\phi^{2\X-2j}}.
\end{eqnarray*}
As we now see, the value of $q_{1}^{(\m{X})}$  cannot be extracted from (\ref{viens}), and it can be defined arbitrarily. Indeed, choosing another value for $q_{\ell}^{(\X)}$, $\ell\geq 1$, leads to a function $\widetilde{U}(\X,z)$ which is different from $U(\X,z)$ by a constant factor $\sum_{\ell\geq 0}q_{\ell}^{(\X)}$, provided the last series is absolutely convergent. We therefore always choose $q_{\ell}^{(\X)}=0$ for $\ell\geq  1$. In terms of the ariable $\omega$ this means the following. The equation (\ref{reik}) remains valid even if we multiply it by any function $t(\omega)$, analytic for $|\omega|\leq 1$. \emph{A posteriori}, the choice $q^{(\m{X})}_{j}=0$ gives the canonical normalization of the function $U$ which satisfies (\ref{tikr}) for the $n$th eigenvalue, and this proves item (iv) of Theorem \ref{thm2}.

\begin{table}[h]
\begin{tabular}{|| r  c c| r c c ||}
\hline
$n$& $W_{1}(n)$ & $W_{1}(n)\sqrt{n}$ &$n$& $W_{1}(n)$ & $W_{1}(n)\sqrt{n}$\\
\hline
$1$  & $0.4774575140626314$ & $0.47745751406263$ &  $14$& $0.1132518994300057$ & $0.42374980606847$ \\
$2$  & $0.3191519488288150$ & $0.45134901449152$ &  $15$& $0.1093746232463436$ & $0.42360609433078$ \\
$3$  & $0.2525179078163117$ & $0.43737384615885$ &  $16$& $0.1058705808693317$ & $0.42348232347732$ \\
$4$  & $0.2157725901346884$ & $0.43154518026938$ &  $17$& $0.1026834287993105$ & $0.42337462294015$ \\
$5$  & $0.1917523856954763$ & $0.42877136926285$ &  $18$& $0.0997680672051097$ & $0.42328006119965$ \\
$6$  & $0.1744105888874710$ & $0.42721694851264$ &  $19$& $0.0970879075478749$ & $0.42319637764101$ \\
$7$  & $0.1610997341266682$ & $0.42622983277780$ &  $20$& $0.0946129110363578$ & $0.42312180125288$ \\
$8$  & $0.1504537306202003$ & $0.42554741270543$ &  $21$& $0.0923181531988805$ & $0.42305492505240$ \\
$9$  & $0.1416824491533456$ & $0.42504734746005$ &  $22$& $0.0901827554098868$ & $0.42299461723883$ \\
$10$ & $0.1342909272767424$ & $0.42466519929053$ &  $23$& $0.0881890773429909$ & $0.42293995713338$ \\
$11$ & $0.1279504805135870$ & $0.42436373560926$ &  $24$& $0.0863220981936739$ & $0.42289018820186$ \\
$12$ & $0.1224328715287222$ & $0.42411990800860$ & $100$& $0.0422070239152352$ & $0.42207023915235$ \\
$13$ & $0.1175738863751953$ & $0.42391867598136$ &$1000$& $0.0133401861665838$ & $0.42185372697076$\\
\hline
\end{tabular}
\newline
\caption{The function $W_{1}(n)$ for $1\leq n\leq 24$, $n=100,1000$.}
\label{table1}
\end{table}
Proposition \ref{propK} tells that
\begin{eqnarray*}
W_{1}(n)\sim\frac{5^{1/4}}{2\sqrt{\pi}\sqrt{n}}=\frac{1}{\sqrt{n}}\cdot 0.4218301030679226_{+}.
\end{eqnarray*}
The convergence rate is $n^{-3/2}$. The numerical data is presented in Table \ref{table1}.
\subsection{$v=2$ and asymptotic results for $W_{2}(n)$}
\label{v-lyg-2}
As a next step, let us read the recurrence (\ref{perein}) in case $v=2$. This gives:
\begin{eqnarray*}
q^{(j)}_{2}+q^{(j)}_{1}W_{1}+q^{(j)}_{0}W_{2}=q^{(j)}_{2}(-1)^{\X+j}\phi^{2\X-2j}+
K(j,\X)W_{1}+\sum\limits_{i=1}^{\infty}q_{1}^{(i)}K(j,i).
\end{eqnarray*}
When $j=\X$, this yields
\begin{eqnarray}
W_{2}=K^{2}(\X,\X)+\sum\limits_{i\neq\X}\frac{K(\X,i)K(i,\X)}{1-(-1)^{i+\X}\phi^{2\X-2i}}.\label{second}
\end{eqnarray}
For example, this gives
\begin{eqnarray*}
W_{2}(1)&=&\frac{25}{16\phi^{4}}+\sum\limits_{s\neq 1}\frac{25s}{4^{s+1}\phi^{4}\big{(}\phi^{2s-2}-(-1)^{s+1}\big{)}}\\
&=&0.267632296633129768709622137665599100606923184706474041907380_{+},\\
W_{2}(2)&=&\frac{35^2}{2^{8}\phi^{8}}+\sum\limits_{s\neq 2}\frac{s(25s-15)^2}
{2^{2s+5}\phi^{8}\big{(}\phi^{2s-4}-(-1)^{s}\big{)}}\\
&=&0.160274183422101231629872112692281197699627281556900277836798_{+}.
\end{eqnarray*}
In neither in these cases the ISC, the \emph{Inverse Symbolic Calculator} \cite{inverse} gives no results. These are new constants. Choosing $j\neq\X$ gives the value for $q^{(j)}_{2}$:
\begin{eqnarray*}
q_{2}^{(j)}&=&\frac{K(j,\X)K(\X,\X)}{(1-(-1)^{j+\X}\phi^{2j-2\X})(1-(-1)^{j+\X}\phi^{2\X-2j})}\\
&+&\sum\limits_{i\neq\X}\frac{K(j,i)K(i,\X)}
{(1-(-1)^{\X+j}\phi^{2\X-2j})(1-(-1)^{\X+i}\phi^{2\X-2i})}.
\end{eqnarray*}
Based on the identity (\ref{second}), we can calculate numerical values for $W_{2}(n)$, and this is summarized in the Table \ref{table2}.

\begin{table}[h]
\begin{tabular}{|| c  c c| c c c ||}
\hline
$n$& $W_{2}(n)$ & $W_{2}(n)\sqrt{n}$ &$n$& $W_{2}(n)$ & $W_{2}(n)\sqrt{n}$\\
\hline
$1$  & $0.2676322966331298$  & $0.26763229663313$ &$14$& $0.0461769229840162$  & $0.17277822498163$ \\
$2$  & $0.1602741834221012$  & $0.22666192389381$ &$15$& $0.0443945360300747$  &$0.17193929870708$ \\
$3$  & $0.1206553454458383$  & $0.20898118851697$ &$16$& $0.0427964974407240$  & 
$0.17118598976290$ \\
$4$  & $0.0992583510259317$ & $0.19851670205186$ &$17$& $0.0413534145890492$ & $0.17050449633061$ \\
$5$  & $0.0857703979434073$ & $0.19178844025867$ &$18$& $0.0400420336967085$ & $0.16988396135665$ \\
$6$  & $0.0764132038812498$ & $0.18717335912032$ &$19$& $0.0388436875599233$ & $0.16931570866817$ \\
$7$  & $0.0694718541272077$ & $0.18380524913915$ &$20$& $0.0377431970151391$ & $0.16879270842804$ \\
$8$  & $0.0640703504588479$ & $0.18121831712981$ &$21$& $0.0367280773808485$ & $0.16830919472793$ \\
$9$  & $0.0597172873901818$ & $0.17915186217055$ &$22$& $0.0357879546763424$ & $0.16786038662576$ \\
$10$ & $0.0561149803515228$ & $0.17745114876641$ &$23$& $0.0349141291268267$ & $0.16744228107545$ \\
$11$ & $0.0530716542113401$ & $0.17601876402250$ &$24$& $0.0340992439956194$ & $0.16705149680786$ \\
$12$ & $0.0504575689489552$ & $0.17479014609200$ &$100$& $0.0157926629640446$ & $0.15792662964044$\\
$13$ & $0.0481814350440456$  &$0.17372063457675$ &$1000$& $0.0048047468687205$ & $0.15193943685718$\\
\hline
\end{tabular}
\newline
\caption{The function $W_{2}(n)$ for $1\leq n\leq 24$, $n=100,1000$.}
\label{table2}
\end{table}

Now we will derive asymptotic formula for the sequence $W_{2}(n)$.
\begin{prop}We have an asymptotic formula
\begin{eqnarray*}
W_{2}(n)\sim\frac{5^{1/4}}{4\sqrt{2}\sqrt{\pi}\sqrt{n}}=\frac{1}{\sqrt{n}}\cdot 0.1491394631939741_{+}.
\end{eqnarray*}
\end{prop}
Numerical data is presented in Table \ref{table2}. However, differently from $v=1$ case in Subsection \ref{v-1}, this time, and in general for $v\geq 2$, the convergenve is only of order $n^{-1}$.
\begin{proof}Let us separate (\ref{second}) into four terms:
\begin{eqnarray*}
W_{2}&=&K^{2}(\X,\X)+\sum\limits_{i<\X}\frac{K(\X,i)K(i,\X)}{1-(-1)^{i+\X}\phi^{2\X-2i}}
+\sum\limits_{i>\X}\frac{K(\X,i)K(i,\X)(-1)^{i+\X}\phi^{2\X-2i}}{1-(-1)^{i+\X}\phi^{2\X-2i}}\\&+&
\sum\limits_{i>\X}K(\X,i)K(i,\X)=\mathcal{S}_{1}+\mathcal{S}_{2}+\mathcal{S}_{3}+\mathcal{S}_{4}.
\end{eqnarray*}
Then, minding the asymptotics of $K(i,\ell)$, we have
\begin{eqnarray*}
\mathcal{S}_{1}=\frac{B}{\X},\quad\mathcal{S}_{2}=\frac{B}{\X},\quad\mathcal{S}_{3}=\frac{B}{\X},
\end{eqnarray*}
and the main asymptotics comes from $\mathcal{S}_{4}$.  
\end{proof}
\subsection{$v=3$ and general asymptotic results for $W_{\ell}(n)$}
The previous subsection, the case $v=2$, already shows the main features how our method works. Now, we will work in detail the case $v=3$, and this will clearly show how the case of general $v$ works. So, let us rewrite the recurrence (\ref{perein}) for $v=3$. This gives
\begin{eqnarray*}
& &q^{(j)}_{3}+q^{(j)}_{2}W_{1}+q^{(j)}_{1}W_{2}+q^{(j)}_{0}W_{3}=\\
& &q^{(j)}_{3}(-1)^{\X+j}\phi^{2\X-2j}+
K(j,\X)W_{2}
+\sum\limits_{i=1}^{\infty}q_{1}^{(i)}K(j,i)W_{1}
+\sum\limits_{i=1}^{\infty}q_{2}^{(i)}K(j,i).
\end{eqnarray*}
In particular, when $j=\X$, this gives
\begin{eqnarray*}
W_{3}=K(\X,\X)W_{2}+\sum\limits_{i=1}^{\infty}q_{1}^{(i)}K(\X,i)W_{1}
+\sum\limits_{i=1}^{\infty}q_{2}^{(i)}K(\X,i).
\end{eqnarray*}
Thus, 
\begin{eqnarray*}
W_{3}=K^{3}(\X,\X)+2\sum\limits_{i\neq\X}\frac{K(\X,i)K(i,\X)K(\X,\X)}{1-(-1)^{i+\X}\phi^{2\X-2i}}+\\
\sum\limits_{i\neq\X}\frac{K(\X,i)K(i,\X)K(\X,\X)}{(1-(-1)^{i+\X}\phi^{2i-2\X})(1-(-1)^{i+\X}\phi^{2\X-2i})}\\
+\sum\limits_{i,j\neq\X}\frac{K(\X,i)K(i,j)K(j,\X)}
{(1-(-1)^{\X+i}\phi^{2\X-2i})(1-(-1)^{\X+j}\phi^{2\X-2j})}.
\end{eqnarray*}
When $j\neq\m{X}$, we obtain
\begin{eqnarray*}
...
\end{eqnarray*}
The Table \ref{table3} summarizes numerical results.
\begin{table}[h]
\begin{tabular}{|| c  c c| c c c ||}
\hline
$n$& $W_{3}(n)$ & $W_{3}(n)\sqrt{n}$ &$n$& $W_{3}(n)$ & $W_{3}(n)\sqrt{n}$\\
\hline
$1$  & $0.1680289608296186$ & $0.16802896082962$ &$14$& $0.0262634917733556$ & $0.09826898799627$ \\
$2$  & $0.0994912028536390$ & $0.14070180841243$ &$15$& $0.0252135206075704$ & $0.09765154541237$ \\
$3$  & $0.0730151760824401$ & $0.12646599469838$ &$16$& $0.0242744501237882$ & $0.09709780049516$ \\
$4$  & $0.0589532295056262$ & $0.11790645901125$ &$17$& $0.0234283081740980$ & $0.09659738923134$ \\
$5$  & $0.0503513491297441$ & $0.11258903941293$ &$18$& $0.0226609289213482$ & $0.09614217904963$ \\
$6$  & $0.0445129374472323$ & $0.10903398369814$ &$19$& $0.0219609781880675$ & $0.09572568462311$ \\
$7$  & $0.0402451220926773$ & $0.10647858454066$ &$20$& $0.0213192670506264$ & $0.09534266071137$ \\
$8$  & $0.0369582987959716$ & $0.10453385479900$ &$21$& $0.0207282581249678$ & $0.09498881188226$ \\
$9$  & $0.0343299221825138$ & $0.10298976654754$ &$22$& $0.0201817037603623$ & $0.09466058137767$ \\
$10$ & $0.0321680798925768$ & $0.10172440041481$ &$23$& $0.0196743764518228$ & $0.09435499478917$ \\
$11$ & $0.0303507625625911$ & $0.10066209152128$ &$24$& $0.0192018649467395$ & $0.09406954245869$ \\
$12$ & $0.0287962642532822$ & $0.09975318550973$ &$100$&$0.0087452046646542$ & $0.08745204664654$ \\
$13$ & $0.0274475597086833$ & $0.09896358391601$ &$500$&$0.0037560671544927$ & $0.08398821485500$ \\
\hline
\end{tabular}
\newline
\caption{The function $W_{3}(n)$ for $1\leq n\leq 24$, $n=100,500$.}
\label{table3}
\end{table}
\begin{prop}We have an asymptotic formula
\begin{eqnarray*}
W_{3}(n)\sim\frac{5^{1/4}}{6\sqrt{3}\sqrt{\pi}\sqrt{n}}=\frac{1}{\sqrt{n}}\cdot 0.0811812411861842_{+}.
\end{eqnarray*}
\end{prop}
\begin{proof}
 Similarly as before,
\begin{eqnarray*}
W_{3}(\X)\sim\sum\limits_{i,j>\X}K(\X,i)K(i,j)K(j,\X).
\end{eqnarray*}
\end{proof}
\section{Asymptotics for $W_{\ell}(\m{X})$}

In a completely analogous manner we get that
\begin{eqnarray}
W_{\ell}(\X)&\sim&\sum\limits_{i_{1},i_{2},\ldots, i_{\ell-1}>\X}
K(\X,i_{1})\cdot K(i_{1},i_{2})\cdots K(i_{\ell-1},\X)
\sim\frac{5^{1/4}}{2^{(\ell+1)/2}\pi^{\ell/2}\X^{1/2}}\cdot\mathscr{I}_{\ell},\label{sum-svarb}\\
\text{where }\mathscr{I}_{\ell}&=&\int\limits_{[0,\infty)^{\ell-1}}
\exp\Big{(}-Q_{\ell}(\alpha_{1},\alpha_{2},\ldots,\alpha_{\ell-1})\Big{)}\d\alpha_{1}\cdots\d\alpha_{\ell-1}.
\nonumber
\end{eqnarray} 
Here $Q_{\ell}$ is the quadratic form
\begin{eqnarray*}
Q_{\ell}(\alpha_{1},\alpha_{2},\ldots,\alpha_{\ell-1})=\sum\limits_{t=1}^{\ell-1}\alpha_{t}^{2}
-\sum\limits_{t=1}^{\ell-2}\alpha_{t}\alpha_{t+1}.
\end{eqnarray*}
(An empty integral here is equal to $1$ by convention, so the asymptotic formula holds for $\ell=1$ as well). We know, and it can be double checked, that
\begin{eqnarray*}
\mathscr{I}_{1}=1,\quad \mathscr{I}_{2}=\frac{\sqrt{\pi}}{2},\quad \mathscr{I}_{3}=\frac{2\pi}{3\sqrt{3}}.
\end{eqnarray*}
For $\ell\geq 4$ the matters look as if different. In fact, the integral $\mathscr{I}_{\ell}$ has a form of the Gaussian integral. We can transform it, with the help of a linear change, into the quadratic form of a diagonal unary form. But then the \emph{solid angle} $[0,\infty)^{\ell-1}$ transforms into the solid angle spanned by some normal vectors, and thus all we need is the formula for solid angles in the space $\mathbb{R}^{\ell}$. For $\ell\geq 4$ such closed-form formula does not exist, only the series expansion \cite{rib}. Yet, by a lucky chance, the values $\mathscr{I}_{\ell}$ can be given a very neat closed form. In our case, we have the following.
\begin{eqnarray*}
\mathscr{I}_{\ell}&=&\int\limits_{[0,\infty)^{\ell-1}}
\exp\Big{(}-\sum\limits_{t=1}^{\ell-1}\alpha_{t}^{2}\Big{)}\cdot
\exp\Big{(}\sum\limits_{t=1}^{\ell-2}\alpha_{t}\alpha_{t+1}\Big{)}\d\alpha_{1}\cdots\d\alpha_{\ell-1}\\
&=&\int\limits_{[0,\infty)^{\ell-1}}
\exp\Big{(}-\sum\limits_{t=1}^{\ell-1}\alpha_{t}^{2}\Big{)}\cdot
\sum\limits_{k_{1},k_{2},\ldots,k_{\ell-2}=0}^{\infty}
\frac{(\alpha_{1}\alpha_{2})^{k_{1}}}{k_{1}!}
\frac{(\alpha_{2}\alpha_{3})^{k_{2}}}{k_{2}!}\cdots
\frac{(\alpha_{\ell-2}\alpha_{\ell-1})^{k_{\ell-2}}}{k_{\ell-2}!}\d\alpha_{1}\cdots\d\alpha_{\ell-1}.
\end{eqnarray*}
The integral splits. Since
\begin{eqnarray*}
\int\limits_{0}^{\infty}\exp(-\alpha^{2})\alpha^{k}\d\alpha=\frac{1}{2}\Gamma\Big{(}\frac{k+1}{2}\Big{)},\text{ for }k\in\mathbb{N}_{0},
\end{eqnarray*}
we thus have
\begin{eqnarray*}
\mathscr{I}_{\ell}=\sum\limits_{k_{1},k_{2},\ldots k_{\ell-2}=0}^{\infty}
\frac{\Gamma\big{(}\frac{k_{1}+1}{2}\big{)}
\Gamma\big{(}\frac{k_{1}+k_{2}+1}{2}\big{)}\cdots
\Gamma\big{(}\frac{k_{\ell-3}+k_{\ell-2}+1}{2}\big{)}
\Gamma\big{(}\frac{k_{\ell-2}+1}{2}\big{)}}
{2^{\ell-1}k_{1}!k_{2}!\cdots k_{\ell-2}!}.
\end{eqnarray*}
In particular, $\mathscr{I}_{3}=\mathscr{I}_{3}(1)$, where
\begin{eqnarray*}
\mathscr{I}_{3}=\sum\limits_{a=0}^{\infty}
\frac{\Gamma\big{(}\frac{a+1}{2}\big{)}^{2}}{4\cdot a!}=
\frac{1}{4}\sum\limits_{a=0}^{\infty}B\Big{(}\frac{a+1}{2},\frac{a+1}{2}\Big{)}
=\frac{1}{4}\sum\limits_{a=0}^{\infty}\int\limits_{0}^{1}x^{(a-1)/2}(1-x)^{(a-1)/2}\d x\\
=
\frac{1}{4}\int\limits_{0}^{1}\frac{1}{\sqrt{x(1-x)}-x(1-x)}\d x
=\frac{2\pi}{3\sqrt{3}},
\end{eqnarray*}
and
\begin{eqnarray*}
\mathscr{I}_{4}&=&\sum\limits_{a,b=0}^{\infty}
\frac{\Gamma\big{(}\frac{a+1}{2}\big{)}
\Gamma\big{(}\frac{a+b+1}{2}\big{)}\Gamma\big{(}\frac{b+1}{2}\big{)}}
{8\cdot a!b!}=1.9687012432153024_{+}=\frac{\pi^{3/2}}{2\sqrt{2}}.
\end{eqnarray*}
In general, as MAPLE numerically confirms,
\begin{eqnarray*}
\mathscr{I}_{\ell}=\frac{(2\pi)^{(\ell-1)/2}}{\ell^{3/2}}.
\end{eqnarray*}
Now, and gathering everything together, we thus obtain the following.
\begin{prop}
\label{prop8}
For $\ell\in\mathbb{N}$, we have the formula
\begin{eqnarray*}
W_{\ell}(n)=\frac{5^{1/4}}{2\sqrt{\pi}\cdot\ell^{3/2}\sqrt{n}}+
\frac{b(\ell,n)}{n},
\end{eqnarray*}
where the function $b(\ell,n)$, for a fixed $\ell$, as a function in $n$, is bounded from above.
\end{prop}
It may seem from the Tables \ref{table1}, \ref{table2}, and \ref{table3} that $b(\ell,n)>0$. This is not true, as we will soon see! In fact, for every fixed $n$ there exists $L=L(n)$ such that $b(\ell,n)<0$, for $\ell>L(n)$. Indeed, let us define now
\begin{eqnarray*}
T(\ell)=\sup\limits_{n\in\mathbb{N}}|b(\ell,n)|,\quad C(\ell,n)=\frac{b(\ell,n)}{T(\ell)}.
\end{eqnarray*}
Thus, $|C(\ell,n)|\leq 1$.
With some ``bootstrapping", we will get an improvement of Proposition \ref{prop8}. Indeed, by the formulas in the Subsecion \ref{interpol0}, we have the following:
\begin{eqnarray*} 
\sum\limits_{i+j=\ell}\frac{1}{(\xi_{i,j}\xi_{j,i})^{-2}-1}<\frac{c_{1}}{\ell^2},\text{ for a certain }c_{1}>0\text{ and all }\ell\in\mathbb{N}.
\end{eqnarray*}
Let $c_{2}=5^{1/4}(2\sqrt{\pi})^{-1}$. Further, minding the fact that $W_{\ell}(n)>0$, by the formula (\ref{sqq}), 
\begin{eqnarray*}
\phi^{-4n}W_{\ell}(n)=\phi^{-4n}W_{\ell}(n)W_{0}(n)<\frac{c_{1}}{(\ell+2)^{2}}<\frac{c_{1}}{\ell^{2}}\Longrightarrow
\Big{|}\frac{c_{2}}{\ell^{3/2}\sqrt{n}}+\frac{T(\ell)C(\ell,n)}{n}\Big{|}<\frac{c_{1}\phi^{4n}}{\ell^{2}}.
\end{eqnarray*}
Imagine now that we consider only $n\in[1,\ldots N]$, for some fixed $N\in\mathbb{N}$. We see that for $\ell$ large enough, starting from some $\ell=L$, all $C(\ell,n)$ should be negative, otherwise the above inequality and the formula (\ref{sqq}) will be invalidated.  Thus,
\begin{eqnarray*}
T(\ell)<\frac{1}{|C(\ell,n)|}\cdot\Big{|}\frac{c_{1}\phi^{4n}n}{\ell^{2}}+
\frac{c_{2}\sqrt{n}}{\ell^{3/2}}\Big{|},\text{ for every }n\in\mathbb{N}.
\end{eqnarray*}
Setting $n=1$, we obtain the following
\begin{eqnarray*}
T(\ell)<\frac{c_{3}}{\ell^{3/2}},
\end{eqnarray*}
provided that
\begin{eqnarray}
\inf\limits_{\ell\in\mathbb{N}}|C(\ell,1)|=c_{4}>0.\label{cc}
\end{eqnarray}
So, if (\ref{cc}) is true, we have the following result, a refinement of (\ref{prop8}):
\begin{prop}
\label{prop9}
For $\ell\in\mathbb{N}$, we have the formula
\begin{eqnarray*}
W_{\ell}(n)=\frac{5^{1/4}}{2\sqrt{\pi}\cdot\ell^{3/2}\sqrt{n}}+
\frac{B(\ell,n)}{\ell^{3/2}n},
\end{eqnarray*}
where the function $B(\ell,n)$, is uniformly bounded.
\end{prop}
The property (\ref{cc}) is equivalent too
\begin{eqnarray*}
\frac{c_{2}}{\ell^{3/2}}\sim(-b(\ell,1))>c_{4}|b(\ell,n)|\text{ for all }\ell,n\in\mathbb{N}. 
\end{eqnarray*}
This is equivalent to the inequality
\begin{eqnarray*}
W_{\ell}(n)<\frac{c_{5}}{\ell^{3/2}}.
\end{eqnarray*} 
If we prove this, then the ``bootstrapping" immediately give Proposition \ref{prop9}. 

\section{Generalized operator}
\label{general}
Now, we need to investigate the following question: how we can be sure that $(-1)^{n+1}\Lambda(n)$ gives \emph{all} eigenvalues of $\mathcal{L}$, that there are no ``sporadic" ones? The answer is provided by the trace formulas, and for this purpose we will employ a generalized operator $\mathcal{L}_{\o}:\widetilde{\mathbf{V}}\mapsto\widetilde{\mathbf{V}}$, $\o\in\mathbb{C}$, $|\o|\leq 1$, which has been already defined by (\ref{gen-op}). Then $\mathcal{L}_{1}[f(t)](z)=\mathcal{L}[f(t)](z)$. If $G(\o,z)$ is the eigenfunction of this operator with the eigenvalue $\lambda(\o)$, then it satisfies the regularity condition (\ref{regular}), and the functional equation (\ref{reik}). In fact, the whole essence of our first method (as said before) can be expressed in the following way. Let us rewrite the functional equation (\ref{reik}) in terms of a function $g$, where  
\begin{eqnarray*}
G(\o,z)=\frac{1}{(z+\phi)^2}\,g\Big{(}\o,\frac{z-\phi^{-1}}{z+\phi}\Big{)},\quad z=\frac{x\phi+\phi^{-1}}{1-x}.
\end{eqnarray*}
(See Subsection \ref{subg}). We then obtain
\begin{eqnarray}
\fbox{$\displaystyle\lambda(\o)g(\o,x)=\phi^{-2}g(\o,-\phi^{-2}x)+
\lambda(\o)\frac{5\o}{(2\phi-x)^2}\,
g\Big{(}\o,\frac{2x\phi^{-1}+1}{-x+2\phi}\Big{)}.$}
\label{this-form}
\end{eqnarray} 
A function $g(\o,x)$ is analytic for $|\o|\leq 1$, $|x|\leq 1$ (excluding only $x=1$ which corresponds to $z=\infty$), $\sup_{|x|\leq 1}|(1-x)g(\o,x)|<+\infty$. This is just a reformulation of a regularity property for $G(\o,z)$. Note that since $G(\o,z)$ is analytic in the cut $z-$plane $\mathbb{C}\setminus(-\infty,-1]$, then $g(\o,x)$ is analytic in the cut $x-$plane $\mathbb{C}\setminus(-\infty,-\phi^{2}]\cup[1,\infty)$.\\

 The form (\ref{this-form}) of a functional equation can be tackled immediately: the first term on the right is just a scaling in a variable $x$ of $g(\o,x)$, so everything is governed by the Taylor coefficients of powers of the argument of the second summand on the right. So, if
\begin{eqnarray*}
g(\o,x)=\sum\limits_{\ell=0}^{\infty}\sum\limits_{i=1}^{\infty}q_{\ell}^{(i)}\o^{\ell}x^{i-1},\quad
\lambda_{n}(\o)=(-1)^{n+1}\phi^{-2n}\sum\limits_{\ell=0}^{\infty}W_{\ell}(n)\o^{\ell},
\end{eqnarray*}
comparing coefficients at $\o^{v}x^{j-1}$ of the above functional equation we obtain exactly the recurrence (\ref{perein}).\\
 
Let us return back to a variable $z$. Multiply now the recurrence $(\ref{main})$ by $\o^{v}$, and sum over integers $v\geq 0$. We see that, if we put
\begin{eqnarray*}
\Lambda(\o,\X)=\phi^{-2\X}\sum\limits_{\ell=0}^{\infty}\o^{\ell}\cdot W_{\ell}(\X),
\end{eqnarray*}
then
\begin{eqnarray}
\lambda_{n}(\o)=(-1)^{n+1}\Lambda(\o,n)
\label{finito}
\end{eqnarray}
is an eigenvalue of $\mathcal{L}_{\o}$ with the eigenfunction $U(n,\o,z)$, where
\begin{eqnarray*}
U(\X,\o,z)=\sum\limits_{\ell=0}^{\infty}\o^{\ell}\cdot u_{\ell}(\X,z).
\end{eqnarray*}
This holds since we obtain exactly the identity (\ref{reik}). Similarly as in the case of $\mathcal{L}_{1}$, for fixed $\o$, $|\o|\leq 1$, the operator $\mathcal{L}_{\o}$ is of trace class and is nuclear of order zero \cite{mayer}. Thus,
\begin{eqnarray*}
{\rm Tr}(\mathcal{L}_{\o})=\sum\limits_{m=1}^{\infty}\frac{\o^{m-1}}{\xi_{m}^{-2}+1},\quad
{\rm Tr}(\mathcal{L}_{\o}^{2})=\sum\limits_{i,j=1}^{\infty}\frac{\o^{i+j-2}}{(\xi_{i,j}\xi_{j,i})^{-2}-1},
\end{eqnarray*}
and similarly for higher powers, as is given by (\ref{mayer}). Now, we know that the eigenvalues of $\mathcal{L}_{\o}$ are either analytic functions of $\o$ for $|\o|\leq 1$, or at least can be expressed as a Puiseux series, and they are real for real $\o$. Let the set of eigenvalues be the union of $\{\lambda_{n}(\o),n\in\mathbb{N}\}$, the set which we have constructed via (\ref{finito}), and $\{\sigma_{\imath}(\o),\imath\in\mathscr{I}\}$, where $\mathscr{I}$ is a finite or a countable set. These are what we called ``sporadic" eigenvalues, and it is the set which was left out of the framework of explicit construction in Section \ref{pirmas}. We have:
\begin{eqnarray*}
\sum\limits_{n=1}^{\infty}\lambda_{n}^{2}(\o)+\sum\limits_{\imath\in\mathscr{I}}\sigma^{2}_{\imath}(\o)\equiv{\rm Tr}(\mathcal{L}_{\o}^{2}),\quad |\o|\leq 1.
\end{eqnarray*}  
In particular, for $\o=0$, this gives
\begin{eqnarray*}
\sum\limits_{\imath\in\mathscr{I}}\sigma^{2}_{\imath}(0)=\frac{1}{\phi^{4}-1}-\sum\limits_{n=1}^{\infty}\lambda_{n}^{2}(0)=0.
\end{eqnarray*}
Thus, since $\sigma_{\imath}(0)\in\mathbb{R}$ are eigenvalues of $\mathcal{L}_{0}$, this implies $\mathscr{I}=\varnothing$. Also, comparing the coefficients at the powers of $\o$ in the trace formula, we obtain
\begin{eqnarray*}
\sum\limits_{n=1}^{\infty}(-1)^{n+1}\phi^{-2n}W_{\ell-1}(n)=\frac{1}{\xi^{-2}_{\ell}+1},\quad \ell\geq 1,
\end{eqnarray*}
and similarly for higher powers, as explained in Subsection \ref{subdec}. This gives the first identities of the Theorem \ref{thm2}, part (ii).

\section{Generating function for the coefficients $W_{\ell}(\X)$}

\subsection{The function $\Omega_{1}(w)$.}
Let
\begin{eqnarray*}
\Omega_{1}(w)=\sum\limits_{n=1}^{\infty}W_{1}(n)w^{n}=
\frac{5}{4}\sum\limits_{n=1}^{\infty}\phi^{-2n}P^{(0,1)}_{n-1}\Big{(}\frac{3}{2}\Big{)}w^{n}=
\frac{5}{4}\sum\limits_{n=1}^{\infty}P^{(0,1)}_{n-1}\Big{(}\frac{3}{2}\Big{)}(\phi^{-2}w)^{n}.
\end{eqnarray*}
Then, according to the formula (\ref{gen-jacobi}), one has
\begin{eqnarray}
\Omega_{1}(w)&=&\frac{5w}{2(1-3\phi^{-2}w+\phi^{-4}w^2)^{1/2}(\phi^{2}+w)+2\phi^{2}(1-3\phi^{-2}w+\phi^{-4}w^2)}\nonumber\\
&=&\frac{5w\phi^{2}}
{2\big{(}(\phi^{4}-w)(1-w)\big{)}^{1/2}(\phi^{2}+w)+2(\phi^{4}-w)(1-w)}\label{genfunc2}\\
&=&\frac{\phi^{2}+w}{2\big{(}(\phi^{4}-w)(1-w)\big{)}^{1/2}}-\frac{1}{2}.\label{genfunc}
\end{eqnarray} 
Thus way we arrive at identities which were written down and checked numerically in Section \ref{sec1}:
\begin{eqnarray*}
-\Omega_{1}(-\phi^{-2})=\frac{1}{2\sqrt{2}+4},\quad
\Omega_{1}(\phi^{-4})=\frac{1}{4\sqrt{3}+6},\quad 
-\Omega_{1}(-\phi^{-6})=\frac{1}{6\sqrt{10}+20}.
\end{eqnarray*}
\subsection{The function $\Omega_{1,1}(w)$, the first way.}
\label{sub7.2}
For $|w|<|t|<1$, $\Omega_{1,1}(w)$ is just the constant term in $t$ of the function $\Omega_{1}(t)\Omega_{1}(\frac{w}{t})$; so,
\begin{eqnarray*}
\Omega_{1,1}(w)=\frac{1}{2\pi i}\oint\limits\frac{\Omega_{1}(t)\Omega_{1}\big{(}\frac{w}{t}\big{)}}{t}\d t=
\frac{25}{16}\sum\limits_{n=1}^{\infty}\phi^{-4n}\Big{(}P_{n-1}^{(0,1)}(3/2)\Big{)}^2w^{n}
\end{eqnarray*} 
where the small contour rounds $t=0$ once in the positive direction. We have:
\begin{eqnarray*}
\Omega_{1}(t)\Omega_{1}\Big{(}\frac{w}{t}\Big{)}=
\frac{1}{4}\Bigg{(}\frac{\phi^{2}+t}{\big{(}(\phi^{4}-t)(1-t)\big{)}^{1/2}}-1\Bigg{)}
\cdot
\Bigg{(}\frac{\phi^{2}t+w}{\big{(}(\phi^{4}t-w)(t-w)\big{)}^{1/2}}-1\Bigg{)}.
\end{eqnarray*}
Suppose, $0<w<1$ is real. The function $\Omega_{1}(t)\Omega_{1}(w/t)$ is a single valued function in the cut $t$-plane $\mathbb{C}\setminus\{[1,\phi^{4}]\cup[\phi^{-4}w,w]\}$ with a singular point $t=0$. Consider the contour consisting of the segment $[1-iT,1+iT]$, and a semicircle $C_{1}$, given by $1+Te^{is}$, $s\in[\frac{\pi}{2},\frac{3\pi}{2}]$. For $|t|$ large and $w$ fixed, from (\ref{genfunc2}), we have 
\begin{eqnarray*}
\Bigg{|}\frac{\Omega_{1}(t)\Omega_{1}\big{(}\frac{w}{t}\big{)}}{t}\Bigg{|}\ll \frac{1}{|t|^{3}},
\end{eqnarray*}
so the integral over $C_{1}$ is $\ll |t|^{-2}$. We now take the limit $T\rightarrow\infty$. So, $\Omega_{1,1}(w)$ is equal to the integral of $(2\pi i)^{-1}\Omega_{1}(t)\Omega_{1}(w/t)t^{-1}$  taken over the line $[1-i\infty,1+i\infty]$. By the same reasoning, this is also equal, up to the opposite sign, to the integral taken over the contour consisting of the segment $[1+iT,1-iT]$ and a semicircle $C_{2}$, given by $1+Te^{is}$, $s\in[\frac{-\pi}{2},\frac{\pi}{2}]$. 
Using the standard contour integration techniques, we thus obtain
\begin{eqnarray*}
\Omega_{1,1}(w)=\frac{1}{4\pi}\int\limits_{1}^{\phi^{4}}
\frac{\phi^{2}+t}{\big{(}(\phi^{4}-t)(t-1)\big{)}^{1/2}}
\cdot
\Bigg{(}\frac{\phi^{2}t+w}{\big{(}(\phi^{4}t-w)(t-w)\big{)}^{1/2}t}-\frac{1}{t}\Bigg{)}\d t.
\end{eqnarray*}
This can be double checked with MAPLE to hold true. At the same time we obtain the identity
\begin{eqnarray*}
\frac{5}{4}\phi^{-2n}P_{n-1}^{(0,1)}(3/2)=
\frac{1}{2\pi}\int\limits_{1}^{\phi^{4}}\frac{\phi^2+t}{\big{(}(t-1)(\phi^{4}-t)\big{)}^{1/2}\cdot t^{n+1}}\d t,\quad n\geq 1.
\end{eqnarray*}
Thus,
\begin{eqnarray*}
\Omega_{1,1}(w)&=&\frac{1}{4\pi}\int\limits_{1}^{\phi^{4}}
\frac{\phi^2+t}{\big{(}(t-1)(\phi^{4}-t)\big{)}^{1/2}}\cdot
\frac{(\phi^{2}t+w)}
{\big{(}(t-w)(\phi^4t-w)\big{)}^{1/2}t}\d t-\frac{1}{4\pi}\int\limits_{1}^{\phi^{4}}
\frac{\phi^2+t}{\big{(}(t-1)(\phi^{4}-t)\big{)}^{1/2}t}\d t\\
&=&\frac{1}{4\pi}\int\limits_{1}^{\phi^{4}}
\frac{\phi^2+t}{\big{(}(t-1)(\phi^{4}-t)\big{)}^{1/2}}\cdot
\frac{\phi^{2}t+w}
{\big{(}(t-w)(\phi^4t-w)\big{)}^{1/2}t}\d t-\frac{1}{2}.
\end{eqnarray*}
This, after some transformations, can be given the expression (using MAPLE) 
\begin{eqnarray}
& &4\pi\cdot\Big{(}\Omega_{1,1}(w)+\frac{1}{2}\Big{)}\nonumber\\
&=&\frac{2(\phi^{2}+1)(w+\phi^2)}{\phi^4-w}\cdot K(z\sqrt{w})+
\frac{2\phi^{2}(1-w)}{\phi^4-w}\cdot \Pi(z,\sqrt{w}z)+
\frac{2\phi^{2}(w-1)}{\phi^4-w}\cdot \Pi(wz,\sqrt{w}z),\label{elliptic}\\
& &\text{ where }
z=\frac{(\phi^4-1)}{\phi^4-w}.\nonumber
\end{eqnarray}
According to the formula (2.1,\cite{bailey}),
\begin{eqnarray*}
\sum\limits_{n=0}^{\infty}\Big{(}P_{n}^{(0,1)}(3/2)\Big{)}^{2}w^{n}
=\frac{1}{(1+w)^{2}}F_{4}\Big{(}1,\frac{3}{2};1,2;\frac{1}{4(w+w^{-1}+2)},\frac{25}{4(w+w^{-1}+2)}\Big{)}.
\end{eqnarray*}
\subsection{The function $\Omega_{1,1}(w)$, the second way.}
We can also present $\Omega_{1,1}(w)$ by the simpler integral using the function $\Theta(x,y)$, defined by (\ref{theta}):
\begin{eqnarray*}
\Omega_{1,1}(w)=\frac{1}{2\pi i}\oint\limits\frac{\Theta(t,\frac{w}{t})w}{t}\d t,\quad|w|<|t|<1. 
\end{eqnarray*}
Thus,
\begin{eqnarray*}
\Omega_{1,1}(w)=\frac{5}{2\pi i}\oint\limits\frac{w}{t(2\phi-t)(2\phi-t-\frac{w}{t}-2w\phi^{-1})}\d t
\end{eqnarray*}
\subsection{The function $\Omega_{2}(w)$} We already now from Subsection (\ref{refined}) that the values of $\Omega_{2}(w)$ at $w=(-1)^{k}\phi^{-2k}$ belong to $\widehat{\mathcal{P}}$.\\

 We will demonstrate this fact in an alternative way. According to the formula (\ref{inte}), we have
\begin{eqnarray*}
K(n,i)\cdot K(i,n)=
\frac{\phi^{-2n-2i}}{2^{i+n+2}(\pi i)^2}\oint\limits_{x=\frac{3}{2}}
\frac{(x-1)^{i-1}(x+1)^{n}}{\Big{(}x-\frac{3}{2}\Big{)}^{n}}\d x\oint\limits_{u=\frac{3}{2}}
\frac{(y-1)^{n-1}(y+1)^{i}}{\Big{(}y-\frac{3}{2}\Big{)}^{i}}\d y.
\end{eqnarray*}
Thus, we need to show that the function
\begin{eqnarray*}
\sum\limits_{n,i=1\atop n\neq i}^{\infty}\frac{\phi^{-2n-2i}}{2^{i+n}(1-(-1)^{n+i}\phi^{2n-2i})}\cdot
\frac{(x-1)^{i-1}(x+1)^{n}}{\Big{(}x-\frac{3}{2}\Big{)}^{n}}\cdot
\frac{(y-1)^{n-1}(y+1)^{i}}{\Big{(}y-\frac{3}{2}\Big{)}^{i}}\,w^{n}\\
=\sum\limits_{n,i=1\atop n\neq i}^{\infty}\frac{\phi^{-2n-2i}}{(1-(-1)^{n+i}\phi^{2n-2i})}\cdot
\frac{(x-1)^{i-1}(x+1)^{n}}{(2x-3)^{n}}\cdot
\frac{(y-1)^{n-1}(y+1)^{i}}{(3y-3)^{i}}\,w^{n}\\
\end{eqnarray*}
Let $n=i+s$, $i,s\geq 1$.
\begin{eqnarray*}
\frac{\phi^{-2s-4}w^{s+1}(x+1)^{s+1}(y-1)^{s}(y+1)}{(1-(-1)^{s}\phi^{2s})(2x-3)^{s+1}(2y-3)}\sum\limits_{i=1}^{\infty}\phi^{-4i+4}\cdot
\frac{(x^2-1)^{i-1}}{(2x-3)^{i-1}}\cdot
\frac{(y^2-1)^{i-1}}{(2y-3)^{i-1}}\,w^{i-1}\\
=\frac{\phi^{-2s}w^{s+1}(x+1)^{s+1}(y-1)^{s}(y+1)}{\Big{(}1-(-1)^{s}\phi^{2s}\Big{)}
(2x-3)^{s}\Big{(}(2x-3)(2y-3)-(x^2-1)(y^2-1)\Big{)}}
\end{eqnarray*} 
\section{Golden section and theta functions}
The general Lambert-like series we obtained for $W_{2}(n)$ in Subsection \ref{v-lyg-2} cannot be evaluated in closed form, but there are exceptions. For example, the 1899 result of Landau claims that (\cite{borwein}, p. 94)
\begin{eqnarray*}
\sum\limits_{n=0}^{\infty}\frac{1}{F_{2n+1}}=\frac{\sqrt{5}}{4}\Big{(}\Theta_{3}^{2}(\phi^{-1})-\Theta_{3}^{2}(\phi^{-2})\Big{)}
\end{eqnarray*},
where the Jacobi theta function
\begin{eqnarray*}
\Theta_{3}(q)=\sum\limits_{n\in\mathbb{Z}}{q^{n^{2}}}.
\end{eqnarray*} 
   
\section{Interpolation of partial trace formulas. I}
\label{interpoli-1}
\subsection{Trace of the square of the operator}
\label{interpol0}
To see the structure of trace formulas that will be crucial in Subsections \ref{interpol} and \ref{interpol2}, we will derive an alternative expression for $\mathrm{Tr}(\mathcal{L}^{2})$, based on the trace formula (\ref{square}).\\

For $i,j\in\mathbb{N}$, let $\xi_{i,j}=p$. Then
\begin{eqnarray}
\frac{1}{i+\frac{1}{j+p}}=p\Rightarrow \frac{p+j}{ip+ij+1}=p.\label{sq}
\end{eqnarray}
Next, $\xi_{j,i}=\frac{1}{j+p}$, so
\begin{eqnarray*}\
(\xi_{i,j}\cdot\xi_{j,i})^{-1}=ip+ij+1.
\end{eqnarray*}
The important observation here is that in the expression of $(\xi_{i,j}\xi_{j,i})^{-1}$ we see exactly the denominator which defines the quadratic irrational number $p$. This appears to be a general rule, and we will see this in the next subsections - this is also clear from (\ref{tau}) ($\frac{P_{1}}{Q_{1}}=\frac{1}{i}$, $\frac{P_{2}}{Q_{2}}=\frac{j}{ij+1}$); see \cite{flajolet2,mayer,mayer3,mayer4}. Indeed, the calculations of Subsection \ref{interpol2} - see the formula (\ref{moebius}) and below - show that (here we have $k=2$)
\begin{eqnarray*}
(\xi_{i,j}\cdot\xi_{j,i})^{-2}-1=\frac{D}{2}+\frac{ij+2}{2}\sqrt{D},\quad D=(ij+2)^2-4.
\end{eqnarray*}
So, we obtain
\begin{eqnarray*}
\mathrm{Tr}(\mathcal{L}^{2})
&=&\sum\limits_{i,j=1}^{\infty}\frac{1}{(\xi_{i,j}\xi_{j,i})^{-2}-1}\\
&=&\sum\limits_{i,j=1}^{\infty}\frac{2}{(ij)^2+4ij+(ij+2)\sqrt{(ij)^{2}+4ij}}
=\sum\limits_{\ell=1}^{\infty}\frac{2\sigma_{0}(\ell)}{\ell^2+4\ell+(\ell+2)\sqrt{\ell^{2}+4\ell}};
\end{eqnarray*}
here $\sigma_{0}(\ell)=\sum_{j|\ell}1$.
Let
\begin{eqnarray*}
\varpi(x)=\frac{2x^{2}}{1+4x+(1+2x)\sqrt{1+4x}}=\sum\limits_{k=2}^{\infty}(-1)^{k}\binom{2k-2}{k-2}x^{k},\quad |x|<\frac{1}{4}.
\end{eqnarray*}
Thus,
\begin{eqnarray*}
\mathrm{Tr}(\mathcal{L}^{2})&=&\sum\limits_{\ell=1}^{4}\varpi\Big{(}\frac{1}{\ell}\Big{)}\sigma_{0}(\ell)+
\sum\limits_{\ell=5}^{\infty}\sum\limits_{k=2}^{\infty}(-1)^{k}\binom{2k-2}{k-2}\frac{\sigma_{0}(\ell)}{\ell^{k}}\\
&=&\sum\limits_{\ell=1}^{4}\varpi\Big{(}\frac{1}{\ell}\Big{)}\sigma_{0}(\ell)+
\sum\limits_{k=2}^{\infty}(-1)^{k}\binom{2k-2}{k-2}\Big{(}\zeta^{2}(k)-1-\frac{2}{2^{k}}-\frac{2}{3^{k}}-\frac{3}{4^{k}}\Big{)}.
\end{eqnarray*}
This proves the Proposition \ref{prop-2t}, since
\begin{eqnarray*}
\zeta^{2}(s)=\sum\limits_{\ell=1}^{\infty}\frac{\sigma_{0}(\ell)}{\ell^{s}},\text{ for }\Re(s)>1.
\end{eqnarray*}
\subsection{One non-identity element}
\label{interpol}
Let
\begin{eqnarray*}
\xi_{\underbrace{N,1,\ldots,1}\limits_{k}}\cdot\xi_{\underbrace{1,N,\ldots,1}\limits_{k}}\cdots\xi_{\underbrace{1,1,\ldots,N}\limits_{k}}=\Gamma_{N,k}. \text{ Here, as before, } \xi_{a_{1},a_{2},\ldots, a_{k}}=[0,\overline{a_{1},a_{2},\ldots, a_{k}}\,].
\end{eqnarray*}
We will find the second degree algebraic functions $\Theta_{N}(w)$ such that
\begin{eqnarray*}
(-1)^{k}\Theta_{N}\Big{(}(-1)^{k}\phi^{-2k}\Big{)}=\Gamma{}_{N,k}^{-2}-(-1)^{k}.
\end{eqnarray*} 
Of course, we already know that $\Theta_{2}(w)=\frac{1}{\Omega_{1}(w)}$, see (\ref{pav}). For this purpose we will explicitly calculate $\Gamma_{N,k}$. Let $\xi_{\underbrace{1,1,\ldots,1,N}\limits_{k}}=p=p(N,k)$. Let $F_{0}=0$, $F_{1}=1$ are the initial values for the standard Fibonacci sequence. Binet's formula tells us that
\begin{eqnarray*}
F_{k}=\frac{\phi^{k}-(-1)^{k}\phi^{-k}}{\sqrt{5}}.
\end{eqnarray*}
It is the standard fact from the theory of regular continued fractions that  $p$ is positive and satisfies the quadratic equation
\begin{eqnarray}
\frac{F_{k-1}(N+p)+F_{k-2}}{F_{k}(N+p)+F_{k-1}}=p.
\label{p}
\end{eqnarray}
Thus,
\begin{eqnarray*}
p=-\frac{N}{2}+\frac{1}{2}\sqrt{N^2+4+4(N-1)\frac{F_{k-1}}{F_{k}}},\quad p\mathop{\longrightarrow}\limits_{k\rightarrow\infty}\phi^{-1}.
\end{eqnarray*}
Then we have:
\begin{eqnarray*}
\xi_{\underbrace{1,1,\ldots,1}\limits_{s-1},\displaystyle{N},\underbrace{1,1,\ldots,1}\limits_{k-s}}=
\xi_{\underbrace{1,1,\ldots,1,N}\limits_{s},{\underbrace{1,1,\ldots,N}\limits_{k}}}=
\frac{F_{s-1}(N+p)+F_{s-2}}{F_{s}(N+p)+F_{s-1}}.
\end{eqnarray*}
This is also valid for $s=1$, if we assume $F_{-1}=1$; Binet's formula remains valid. So, we are looking for an algebraic function $\Theta_{N}(w)$ such that
\begin{eqnarray*}
\psi=\psi(N,k)=\prod\limits_{s=1}^{k}\Bigg{(}\frac{F_{s}(N+p)+F_{s-1}}{F_{s-1}(N+p)+F_{s-2}}\Bigg{)}^{2}-(-1)^{k}=
(-1)^{k}\Theta_{N}\Big{(}(-1)^k\phi^{-2k}\Big{)}.
\end{eqnarray*}
(We interchanged numerator and denominator of the central term and changed the exponent $-2$ into $2$). Note that 
\begin{eqnarray*}
p\in\mathbb{K}=\mathbb{Q}\big{[}\sqrt{D}\,\,\big{]},\text{ where }
D=D(N,k)=(N^2+4)F_{k}^{2}+4(N-1)F_{k}F_{k-1}. 
\end{eqnarray*}
So, $\psi\in\mathbb{K}$, too. First, we observe that
\begin{eqnarray}
D+4(-1)^{k}&=&(N^2+4)F_{k}^{2}+4(N-1)F_{k}F_{k-1}+4(-1)^{k}\nonumber\\
&=&(N^2+4)F_{k}^{2}+4(N-1)F_{k}F_{k-1}+4(F_{k-1}F_{k+1}-F_{k}^2)\nonumber\\
&=&N^2F_{k}^{2}+4NF_{k}F_{k-1}+4F_{k-1}F_{k+1}-4F_{k}F_{k-1}\nonumber\\
&=&N^2F_{k}^{2}+4NF_{k}F_{k-1}+4F_{k-1}^{2}\nonumber\\
&=&(NF_{k}+2F_{k-1})^2.\label{propp}
\end{eqnarray}
Second, the product which defines $\psi$ is telescopic. So,
\begin{eqnarray}
\psi&=&\big{(}F_{k}(N+p)+F_{k-1}\big{)}^{2}-(-1)^{k}\label{deno}\\
&=&\Bigg{(}\frac{NF_{k}+2F_{k-1}+\sqrt{D}}{2}\Bigg{)}^{2}-(-1)^{k}\nonumber\\
&=&\frac{(NF_{k}+2F_{k-1})^2+2(NF_{k}+2F_{k-1})\sqrt{D}+D}{4}-(-1)^{k}\nonumber\\
&=&\frac{D+\sqrt{D}(NF_{k}+2F_{k-1})}{2}.
\label{prop4}
\end{eqnarray}
Note the important fact - in the expression of $\psi$ as is given by (\ref{deno}), one finds exactly the denominator of (\ref{p}). We know that this happens always (see (\ref{tau})), whatever the structure of the particular quadratic irrational is.
Thus, let  
\begin{eqnarray*}
P_{N}(w)=P(w):=(N^2+4)(1-w)^2+4(N-1)(1-w)(\phi^{-1}+w\phi).
\end{eqnarray*}
Then from the Binet's formula we get that $D=P\Big{(}(-1)^k\phi^{-2k}\Big{)}\phi^{2k}/5$. Substituting this into the identity (\ref{prop4}), we thus obtain the following.
\begin{prop}The next identity holds:
\begin{eqnarray*}
\Theta_{N}(w)=\frac{P(w)+\sqrt{P(w)}\Big{(}(N+2\phi^{-1})-w(N-2\phi)\Big{)}}{10w}.
\end{eqnarray*}
\end{prop}
In the setting of Subsection \ref{subdec}, we recover that for the investigation of decomposition formulas at $\o^{1}$ we should explore the function $\Theta_{2}(w)$. Thus, when $N=2$, we get that $P(w)=4\phi^{-2}(1-w)(\phi^4-w)$, and we once again the identity $\Theta_{2}^{-1}(w)=\Omega_{1}(w)$. Further, for the decomposition formulas at $\o^{2}$, as is seen from (\ref{reikia}), we need to explore $\Theta_{3}(w)$. Thus, in this case
\begin{eqnarray*}
P(w)=(1-w)\big{(}13-13w+8\phi^{-1}+8w\phi\big{)}=
(1-w)(\phi^{6}-\phi^{-6}w)=\phi^{-6}(1-w)(\phi^{12}-w).
\end{eqnarray*}
So,
\begin{eqnarray}
\frac{1}{\Theta_{3}(w)}=\frac{10w\phi^{6}}
{\big{(}(\phi^{12}-w)(1-w)\big{)}^{1/2}
(\phi^{6}+w)+(\phi^{12}-w)(1-w)}.
\label{teta3}
\end{eqnarray}
We can give the function $\Theta_{N}(w)$ a slightly more convenient expression. Note that
\begin{eqnarray*}
NF_{k}+2F_{k-1}=\frac{1}{\sqrt{5}}\Big{(}
N\phi^{k}-N(-1)^{k}\phi^{-k}+2\phi^{k-1}-2(-1)^{k-1}\phi^{-k+1}\Big{)}.
\end{eqnarray*} 
Thus,
\begin{eqnarray*}
\sqrt{5}\phi^{-k}(NF_{k}+2F_{k-1})=
N-N(-1)^{k}\phi^{-2k}+2\phi^{-1}-2(-1)^{k-1}\phi^{-2k+1}.
\end{eqnarray*}
So, if we set 
\begin{eqnarray*}
T_{N}(w)=T(w):=(N+2\phi^{-1})-w(N-2\phi),
\end{eqnarray*}
then
\begin{eqnarray*}
NF_{k}+2F_{k-1}=\frac{1}{\sqrt{5}}\cdot\phi^{k}\cdot T\Big{(}(-1)^{k}\phi^{-2k}\Big{)},
\end{eqnarray*}
and
\begin{eqnarray*}
\Theta_{N}(w)=\frac{T(w)^{2}-20w+T(w)\sqrt{T(w)^{2}-20w}}{10w}.
\end{eqnarray*}
The identity $P(w)=T(w)^{2}-20w$ can be checked directly. \\

Finally, we need  to calculate the Taylor coefficients of
\begin{eqnarray*}
\frac{10w}{P(w)+\sqrt{P(w)}
\Big{(}(N+2\phi^{-1})-w(N-2\phi)\Big{)}}=\sum\limits_{n=1}^{\infty}L_{N}(n)w^{n}.
\end{eqnarray*}

Now if we define
\begin{eqnarray*}
R(w)=\frac{w}{L(w)^2-w+L(w)\sqrt{L(w)^{2}-w}},\quad L(w)=\alpha+\beta w,
\end{eqnarray*} 
then, according to (\ref{gen-jacobi}), we have
\begin{eqnarray}
R(w)=\frac{1}{2}
\sum\limits_{n=1}^{\infty}w^{n}P_{n-1}^{(0,1)}\Big{(}\frac{1}{2\alpha\beta}-1\Big{)}\alpha^{-n-1}\beta^{n-1}.
\label{r}
\end{eqnarray}

In the case $\frac{1}{\Theta_{N}(w)}$, we have $\alpha=\frac{N+2\phi^{-1}}{\sqrt{20}}$, $\beta=-\frac{N-2\phi}{\sqrt{20}}$; so, $\alpha\cdot\beta=-\frac{N^2-2N-4}{20}$. So,
\begin{eqnarray*}
L_{N}(n)=5(N+2\phi^{-1})^{-n-1}(-N+2\phi)^{n-1}P^{(0,1)}_{n-1}\Big{(}-\frac{10}{N^2-2N-4}-1\Big{)}\sim  \frac{5^{1/4}}
{2\sqrt{N-1}}\cdot\frac{1}{\sqrt{\pi n}}.
\end{eqnarray*}
The asymptotics is the direct consequence of (\ref{asympt}). We already know that $L_{2}(n)=W_{1}(n)=K(n,n)$. Moreover, we arrived at the formula (\ref{genfunc}) independently. In the special case $N=3$ this reads as
\begin{eqnarray*}
L_{3}(n)=5\phi^{-6n}P^{(0,1)}_{n-1}(9).
\end{eqnarray*} 
\section{Interpolation of partial trace formulas. II}
\label{interpoli-2}
\subsection{Two non-identity elements}
\label{interpol2}
To finish our investigations of the function $W_{2}(n)$, let
\begin{eqnarray*}
\xi_{\underbrace{\mathbf{2}_{1},1,\ldots,1,2_{r},1,\ldots,1}\limits_{k}}\cdot\xi_{\underbrace{1,\mathbf{2}_{2},1,\ldots,1,2_{r+1},1,\ldots,1}\limits_{k}}\cdot& &\\
\xi_{\underbrace{1,1,\mathbf{2}_{3},1,\ldots,1,2_{r+2},1,\ldots,1}\limits_{k}}\cdots
\xi_{\underbrace{1,1,\ldots,1,2_{r-1},1,\ldots,1,\mathbf{2}_{k}}\limits_{k}}&=&\Delta_{r,k}.
\end{eqnarray*}
(This is cyclic permutation. We only bold-case one number ``2" and its position for clarity). In a similar way, we will find the second degree algebraic functions $\Pi_{r}(w)$ (see the end of Section \ref{sec1} for the notation) such that
\begin{eqnarray*}
(-1)^{k}\Pi_{r}\Big{(}(-1)^{k}\phi^{-2k}\Big{)}=\Delta_{r,k}^{-2}-(-1)^{k}.
\end{eqnarray*} 
Note that $\Delta_{r,k}=\Delta_{k-r+2,k}$, so we should also obtain
\begin{eqnarray}
\Pi_{r}(w)=\Pi_{k-r+2}(w).
\label{symm}
\end{eqnarray}
Let, for $2\leq r\leq k$, $k\geq 2$, $\xi_{\underbrace{1,1,\ldots,1,2_{r-1},1,\ldots,1,2_{k}}\limits_{k}}=p=p(r,k)$. Again, $p$ is positive and satisfies the quadratic equation
\begin{eqnarray}
\frac{(F_{k-r+1}F_{r}+F_{k-r}F_{r-2})(2+p)+(F_{k-r}F_{r}+F_{k-r-1}F_{r-2})}{(F_{k-r+1}F_{r+1}+F_{k-r}F_{r-1})(2+p)+(F_{k-r}F_{r+1}+F_{k-r-1}F_{r-1})}=p. \label{pp}
\end{eqnarray}  
Similarly as before, we have:
\begin{eqnarray*}
\xi_{s}&:=&\xi_{\underbrace{1,1,\ldots,1,2_{s},1,\ldots,1,2_{r+s-1},1,\ldots ,1}_{k}}\\
&=&\xi_{\overbrace{1,1,\ldots,1},\displaystyle{2_{s}},\underbrace{1_{1},1,\ldots,1,2_{r-1},1,\ldots,1,2_{k}}\limits_{k}}.
\end{eqnarray*}
If $r+s-1\leq k$, then all overbraced numbers are all equal to $1$. So,
\begin{eqnarray*}
\xi_{s}=\frac{F_{s-1}(2+p)+F_{s-2}}{F_{s}(2+p)+F_{s-1}}.
\end{eqnarray*}
Otherwise, if $r+s\geq k+2$, then
\begin{eqnarray*}
\xi_{s}=\xi_{\displaystyle{1,\ldots,1,2_{r+s-k-1},1,\ldots,1},\displaystyle{2_{s}},\underbrace{1_{1},1,\ldots,1,2_{r-1},1,\ldots,1,2_{k}}\limits_{k}}.
\end{eqnarray*}
So,
\begin{eqnarray*}
\xi_{s}=\frac{(F_{k-r+1}F_{r+s-k}+F_{k-r}F_{r+s-k-2})(2+p)+(F_{k-r}F_{r+s-k}+F_{k-r-1}F_{r+s-k-2})}
{(F_{k-r+1}F_{r+s-k+1}+F_{k-r}F_{r+s-k-1})(2+p)+(F_{k-r}F_{r+s-k+1}+F_{k-r-1}F_{r+s-k-1})}.
\end{eqnarray*}
In the special case $s=k$ we recover, via (\ref{pp}), the correct identity $\xi_{k}=p$. Thus, as before, we need to investigate the product
\begin{eqnarray}
\varrho&=&\varrho(r,k)=\prod\limits_{s=1}^{k}\xi_{s}^{-2}-(-1)^{k}
=\prod\limits_{s=1}^{k-r+1}\xi_{s}^{-2}
\cdot \prod\limits_{s=k-r+2}^{k}\xi_{s}^{-2}-(-1)^{k}\nonumber\\
&=&\big{(}F_{k-r+1}(2+p)+F_{k-r}\big{)}^2\nonumber\\
& &\cdot\Bigg{(}\frac{(F_{k-r+1}F_{r+1}+F_{k-r}F_{r-1})(2+p)+(F_{k-r}F_{r+1}+F_{k-r-1}F_{r-1})}{F_{k-r+1}(2+p)+F_{k-r}}\Bigg{)}^{2}
-(-1)^{k}\nonumber\\
&=&\Big{(}(F_{k-r+1}F_{r+1}+F_{k-r}F_{r-1})(2+p)+(F_{k-r}F_{r+1}+F_{k-r-1}F_{r-1})\Big{)^2
}-(-1)^{k}\label{denoo}\\
&=&
(-1)^{k}\Pi_{r}\Big{(}(-1)^k\phi^{-2k}\Big{)}.\nonumber
\end{eqnarray}
Again, the expression for $\varrho$ as is given by (\ref{denoo}) contains exactly the denominator of (\ref{pp}); this is what happened with the number $\psi$ in (\ref{deno}). As we know, this is a general rule, and this fact considerably reduces our calculations.\\

The quadratic equation for $p$ can be written
\begin{eqnarray*}
\frac{(F_{k-r+1}F_{r}+F_{k-r}F_{r-2})p+(F_{k-r+3}F_{r}+F_{k-r+2}F_{r-2})}{(F_{k-r+1}F_{r+1}+F_{k-r}F_{r-1})p+(F_{k-r+3}F_{r+1}+F_{k-r+2}F_{r-1})}=p.
\end{eqnarray*}
Now, if $a,b,c,d$ are the corresponding integers of the fraction that is above, that is,
\begin{eqnarray}
\frac{ap+b}{cp+d}=p,\label{moebius}
\end{eqnarray}
then
\begin{eqnarray*}
cp+d=\frac{a+d}{2}+\frac{1}{2}\sqrt{(d-a)^{2}+4bc}.
\end{eqnarray*}
We could skip these lengthy computations altogether, since the above expression for $cp+d$ is exactly the formula (\ref{tau}) (see Theorem 15 in \cite{flajolet2}), where $cp+d$ is denoted by $\tau(h)$. It should now be clear that $\Delta_{r,k}$ always has a very simple expression in terms of numerators and denominators of the corresponding convergents. \\

Further,
\begin{eqnarray*}
\varrho&=&(cp+d)^{2}-(-1)^{k}=bc+\frac{a^{2}+d^{2}}{2}-(-1)^{k}+
\frac{a+d}{2}\sqrt{D}\\
&=&\frac{D}{2}+\frac{a+d}{2}\sqrt{D}+(ad-bc)-(-1)^{k};
\end{eqnarray*}
here $D=D(r,k)=(d-a)^{2}+4bc$. Without even calculating, we know that $a=p_{k-1}$, $b=p_{k-2}+2p_{k-1}=p_{k}$, $c=q_{k-1}$, $d=q_{k-2}+2q_{k-1}=q_{k}$, where $\frac{p_{\ell}}{q_{\ell}}$ are corresponding convergents to the quadratic irrational number $p$. So, the theory of continued fractions tells us that $ad-bc=(-1)^{k}$, and thus 
\begin{eqnarray}
& &\fbox{$\displaystyle{\varrho=\frac{D}{2}+\frac{a+d}{2}\sqrt{D}}$}\label{kr}\\
\text{where }D&=&(a+d)^{2}+4(bc-ad)=(a+d)^{2}-4(-1)^{k}.
\nonumber
\end{eqnarray}
The formula for $\varrho$ is completely analogous to the formulas (\ref{propp}) and (\ref{prop4}). We see that in general $\varrho$ is very close to an integer $D+(-1)^{k}$:
\begin{eqnarray*}
\varrho^{2}-D\varrho-(-1)^{k}D=0\Rightarrow\frac{1}{D+(-1)^{k}-\varrho}=\varrho+(-1)^{k}
\Rightarrow \varrho\approx D+(-1)^{k}-\frac{1}{D+2(-1)^{k}}.
\end{eqnarray*}
More precisely,
\begin{eqnarray*}
\varrho&=&D+[\,\overline{1,D}\,]\text{ for }k\text{ even},\\
\varrho&=&D-2+[\,\overline{1,D-4}\,]\text{ for }k\text{ odd}.
\end{eqnarray*}
In our case,
\begin{eqnarray*}
a+d=F_{k-r+1}F_{r}+F_{k-r}F_{r-2}+F_{k-r+3}F_{r+1}+F_{k-r+2}F_{r-1}.
\end{eqnarray*}
We recall the definition of Lucas numbers:
\begin{eqnarray*}
& &L_{0}=2,\quad L_{1}=1,\quad L_{s+2}=L_{s+1}+L_{s}\text{ for }s\geq 0,\\
& &L_{j}=\phi^{j}+(-1)^{j}\phi^{-j}.
\end{eqnarray*}
Thus, from Binet's formula we get the identity
\begin{eqnarray*}
F_{\alpha}F_{\beta}=\frac{1}{5}\,L_{\alpha+\beta}
-\frac{1}{5}\,(-1)^{\beta}L_{\alpha-\beta}.
\end{eqnarray*}
Thus, 
\begin{eqnarray*}
5(a+d)&=&
L_{k+1}-(-1)^{r}L_{k-2r+1}+
L_{k-2}-(-1)^{r}L_{k-2r+2}\\
&+&L_{k+4}-(-1)^{r+1}L_{k-2r+2}+
L_{k+1}-(-1)^{r+1}L_{k-2r+3}.
\end{eqnarray*}
Further, we have:
\begin{eqnarray*}
L_{k-2}+2L_{k+1}+L_{k+4}&=&L_{k}-L_{k-1}+2L_{k+1}+L_{k+4}\\
&=&L_{k+2}-L_{k+1}-L_{k+1}+L_{k}+2L_{k+1}+L_{k+4}\\
&=&L_{k}+L_{k+2}+L_{k+4}\\
&=&L_{k+2}-L_{k+1}+L_{k+2}+2L_{k+2}+L_{k+1}\\
&=&4L_{k+2}.
\end{eqnarray*}
Thus,
\begin{eqnarray}
5(a+d)=4L_{k+2}+(-1)^{r}L_{k-2r+2}.
\label{kr2}
\end{eqnarray}
We can double-check this formula in special cases, like $k=3$, $r=3$, $\xi=\xi_{1,2,2}$, or $k=4$, $r=2$, $\xi=\xi_{2,1,1,2}$, and this does hold. The number $k-2r+2$ can be also negative, Binet's formula for $L_{k}$ remains valid, which implies $L_{-k}=(-1)^{k}L_{k}$. So,
\begin{eqnarray*}
(a+d)
=\frac{1}{5}\Big{[}
4\phi^{k+2}+4(-1)^{k}\phi^{-k-2}+(-1)^{r}\phi^{k-2r+2}+(-1)^{k+r}\phi^{-k+2r-2}
\Big{]},
\end{eqnarray*}
Thus, let us define
\begin{eqnarray}
L_{r}(w)=\Big{(}4\phi^{2}+(-1)^{r}\phi^{-2r+2}\Big{)}
+w\Big{(}4\phi^{-2}+(-1)^{r}\phi^{2r-2}\Big{)}.
\label{tiesi}
\end{eqnarray}
Then 
\begin{eqnarray*}
a+d=L_{r}(w)\cdot\frac{\phi^{k}}{5},\quad w=(-1)^{k}\phi^{-2k}.
\end{eqnarray*}
and we finally obtain
\begin{eqnarray*}
\Pi_{r}(w)&=&\frac{L_{r}(w)^2-100w+L_{r}(w)\sqrt{L_{r}(w)^{2}-100w}}{50w},\quad 2\leq r\leq k,\\
\frac{1}{\Pi_{r}(w)}&=&\frac{L_{r}(w)}{2\sqrt{L_{r}(w)^2-100w}}-\frac{1}{2}.
\end{eqnarray*}
By a direct check,
\begin{eqnarray*}
L_{r}\Big{(}(-1)^{k}\phi^{-2k}\Big{)}=L_{k-r+2}\Big{(}(-1)^{k}\phi^{-2k}\Big{)},
\end{eqnarray*}
so the symmetry property (\ref{symm}) does hold.

Now we are finally in the position to find the exact structure of the functions $\Omega_{1,1}(w)$ and $\Omega_{2}(w)$, using the identity that equates (\ref{reikia2}) and (\ref{reikia}). \\

In the special case (\ref{tiesi}), that is, when $\alpha_{r}=4\phi^{2}+(-1)^{r}\phi^{-2r+2}$,
$\beta_{r}=4\phi^{-2}+(-1)^{r}\phi^{2r-2}$, one has
\begin{eqnarray*}
\alpha_{r}\cdot\beta_{r}=17+4(-1)^{r}L_{2r}.
\end{eqnarray*}
Thus, according to (\ref{r}),
\begin{eqnarray}
\sum\limits_{r=2}^{k}\frac{1}{\Pi_{r}(w)}&=&\frac{1}{2}\sum\limits_{r=2}^{k}\frac{w}{(L_{r}(w)/10)^2-w+(L_{r}(w)/10)\sqrt{(L_{r}(w)/10)^{2}-w}}\nonumber\\
&=&25\sum\limits_{n=1}^{\infty}w^{n}\sum\limits_{r=2}^{k}\alpha_{r}^{-n-1}\beta_{r}^{n-1}
P^{(0,1)}_{n-1}\Big{(}\frac{50}{\alpha_{r}\beta_{r}}-1\Big{)}.\label{penkes}
\end{eqnarray}

We need to express the latter sum in terms of $k W(w)$ and $Y(w)$, $w=(-1)^{k}\phi^{-2k}$, which must be possible as explained in Subsection (\ref{subdec}).
Indeed, we need to express
\begin{eqnarray*}
\Sigma(u;v)=\sum\limits_{r=2}^{k}\frac{\beta_{r}^{u}}{\alpha_{r}^{v}}=
\sum\limits_{r=2}^{k}\frac{(4\phi^{-2}+(-1)^{r}\phi^{2r-2})^{u}}{(4\phi^{2}+(-1)^{r}\phi^{-2r+2})^{v}},\quad v\geq u+2\geq 0. 
\end{eqnarray*}
in terms of $k W(w)$ and $Y(w)$. Expanding in terms of powers of $(-1)^{r}\phi^{-2r}$ and summing over $r=2,\quad k$, we see that this indeed can be expressed in terms of $w$ and $kw$. As mentioned, this research (arithmetic of decomposition formulas) is the main topic of the paper \cite{antras}.\\

So, we need to sum
\begin{eqnarray*}
\sum\limits_{r=2}^{k}\frac{1}{\varrho(r,k)}.
\end{eqnarray*}
Let $a(r,k)+d(r,k)=t(r,k)$, $D(r,k)=t(r,k)^{2}-4(-1)^{k}$. We have:
\begin{eqnarray*}
\frac{1}{\varrho}=\frac{2}{D+t\sqrt{D}}=\frac{\frac{2}{t^{2}}}
{1-\frac{4(-1)^{k}}{t^2}+\sqrt{1-\frac{4(-1)^{k}}{t^2}}}.
\end{eqnarray*}
Let
\begin{eqnarray*}
\frac{4(-1)^{k}}{t^2}=\eta(r,k).
\end{eqnarray*}
First, we have
\begin{eqnarray*}
\frac{\eta}{1-\eta+\sqrt{1-\eta}}=\sum\limits_{j=1}^{\infty}\binom{2j}{j}4^{-j}\eta^{j}.
\end{eqnarray*}
So,
\begin{eqnarray*}
\sum\limits_{r=2}^{k}\frac{1}{\varrho(r,k)}=\frac{(-1)^{k}}{2}\sum\limits_{j=1}^{\infty}\binom{2j}{j}
(-1)^{kj}\sum\limits_{r=2}^{k}\frac{1}{t(r,k)^{2j}}.
\end{eqnarray*}
\begin{eqnarray*}
\frac{1}{(a+d)^{2j}}
&=&\frac{5^{2j}}{\Big{[}
4\phi^{k+2}+4(-1)^{k}\phi^{-k-2}+(-1)^{r}\phi^{k-2r+2}+(-1)^{k+r}\phi^{-k+2r-2}
\Big{]}^{2j}}\\
&=&\frac{\Big{(}\frac{5}{4\phi^{2}}\Big{)}^{2j}\phi^{-2kj}}{\Big{[}
1+(-1)^{k}\phi^{-2k-4}+\frac{1}{4}(-1)^{r}\phi^{-2r}+\frac{1}{4}(-1)^{k+r}\phi^{-2k+2r-4}
\Big{]}^{2j}}\\
=&=&\frac{\Big{(}\frac{5}{4\phi^{2}}\Big{)}^{2j}\phi^{-2kj}}{\Big{[}
1+\phi^{-4}w+\frac{1}{4}(-1)^{r}\phi^{-2r}+\frac{1}{4}(-1)^{r}\phi^{2r-4}w
\Big{]}^{2j}},
\end{eqnarray*}

\subsection*{Acknowledgements}The author gratefully acknowledges hospitality of Prof. Dieter Mayer and the Clausthal Technical University (August-September 2012, also February 2015), where the main ideas of this paper occurred, the hospitality of the University of W\"{u}rzburg (July-August 2012), the Max Planck Institute for Mathematics in Bonn (October-December 2012). I also thank Prof. Wadim Zudilin for the reference \cite{bailey}, and dr. Pascal Sebah for performing high-precision numerical calculations for the eigenvalues $\lambda_{n}$.


\begin{thebibliography}{9}

\bibitem{antras} {\sc G. Alkauskas}, Transfer operator for the Gauss' continued fraction map. II. Fine arithmetic of the decomposition formulas ({\it in preparation}).

\bibitem{trecias} {\sc G. Alkauskas}, Transfer operator for the Gauss' continued fraction map. III. Selberg zeta function and structure of the eigenvalues of the Mayer-Ruelle operator ({\it in preparation}).

\bibitem{ga}{\sc G. Alkauskas}, The moments of Minkowski question mark function: the dyadic period function, {\it Glasg. Math. J.} 52 (1) (2010), 41--64.


\bibitem{alkauskas-inv}{\sc G. Alkauskas}, Generating and zeta functions, structure, spectral and analytic properties of the moments of the Minkowski question mark function, {\it Involve} {\bf 2} (2) (2009), 121-159.

\bibitem{alkauskas}{\sc G. Alkauskas}, The Minkowski question mark function: explicit series for the dyadic period function and moments, {\it Math. Comp.} {\bf 79} (269) (2010), 383--418. Addenda and corrigenda, {\it Math. Comp.} {\bf 80} (276) (2011), 2445--2454. 

\bibitem{alkauskas1}{\sc G. Alkauskas}, Recursive construction of a series converging to the eigenvalues of the Gauss-Kuzmin-Wirsing operator, {\tt arXiv:1004.1783}.

\bibitem{babenko} {\sc K. I. Babenko}, A problem by Gauss, {\it Dokl. Akad.
    Nauk SSSR} {\bf 238} (5) (1978), 1021--1204; English translation: {\it
    Soviet Math. Dokl. } {\bf 19} (1) (1978),  136--140.
    
\bibitem{babenko2} {\sc K. I. Babenko, S. P. Yur'ev}, On a problem of Gauss, {\it Selecta Math. Soviet.} {\bf 2}(4) (1982), 331--378.


\bibitem{bailey}{\sc W. N. Bailey}, The generating function of Jacobi polynomials, {\it J. London Math. Soc.} {\bf 13} (1938), 8--12.
 
\bibitem{beukers}{\sc F. Beukers},
Hypergeometric functions, how special are they?
{\it Notices Amer. Math. Soc. } {\bf 61} (1) (2014), 48–-56. 

\bibitem{briggs}{\sc K. Briggs} (2003), A precise computation of the
Gauss-Kuzmin-Wirsing constant, \url{http://keithbriggs.info/documents/wirsing.pdf}.

\bibitem{borwein}{\sc J. M. Borwein, Jonathan M, P. B. Borwein}, {\it Pi and the AGM. A study in analytic number theory and computational complexity}. Canadian Mathematical Society Series of Monographs and Advanced Texts. A Wiley-Interscience Publication. John Wiley \& Sons, Inc., New York, 1987.

\bibitem{chang-mayer}{\sc Ch.-H. Chang, D. Mayer},  The transfer operator approach to Selberg's zeta function and modular and Maass wave forms for ${\rm PSL}(2,{\bf Z})$. In: {\it Emerging applications of number theory (Minneapolis, MN, 1996)}, 73--141, IMA Vol. Math. Appl., 109, Springer, New York, 1999. 

\bibitem{durner}{\sc A. Durner}, On a theorem of Gauss-Kuzmin-L\'{e}vy, {\it Arch. Math. (Basel)} {\bf 58}(3) (1992), 251--256.

\bibitem{flajolet2} {\sc H. Daud\'{e}, Ph. Flajolet, B. Vall\'{e}e}, An average-case analysis of the Gaussian algorithm for lattice reduction, {\it Combin. Probab. Comput.} {\bf 6} (4) (1997), 397--433.

\bibitem{finch}{\sc S.R. Finch}, {\it Mathematical constants}, Encyclopedia of Mathematics and its Applications, 94. Cambridge University Press, Cambridge (2003).

\bibitem{flajolet1} {\sc Ph. Flajolet, B. Vall\'{e}e} (1995), On the Gauss-Kuzmin-Wirsing constant,
\url{http://algo.inria.fr/flajolet/Publications/gauss-kuzmin.ps}.


\bibitem{flajolet3} {\sc Ph. Flajolet, B. Vall\'{e}e}, Continued fraction algorithms, functional operators, and structure constants,
      {\it Theoret. Comput. Sci.} {\bf 194} (1-2) (1998), 1--34.

\bibitem{flajolet4} {\sc Ph. Flajolet, B. Vall\'{e}e}, Continued fractions, comparison algorithms, and fine structure constants,
{\it Constructive, experimental, and nonlinear analysis (Limoges, 1999)},  53--82, CMS Conf. Proc., 27, Amer. Math. Soc., Providence, RI,  2000.

\bibitem{furst}{\sc H. Furstenberg}, Algebraic functions over finite fields, {\it J. Algebra} {\bf 7} (1967) 271--277.

\bibitem{hensley}{\sc D. Hensley}, The number of steps in the Euclidean algorithm, {\it J. Number Theory} {\bf 49}(2) (1994), 142--182.

\bibitem{hensley1}{\sc D. Hensley}, {\it Continued fractions}, World Scientific Publishing Co. Pte. Ltd. (2006).

\bibitem{hensley2}{\sc D. Hensley}, Continued fractions, Cantor sets, Hausdorff dimension, and transfer operators and their analytic extension, {\it Discrete Contin. Dyn. Syst.} {\bf 32}(7) (2012), 2417–-2436. 

\bibitem{iosif}{\sc M. Iosifescu}, Spectral analysis for the Gauss problem on continued fractions, {\it Indag. Math. (N.S.)} {\bf 25}(4) (2014), 825--831.

\bibitem{iosif2}{\sc M. Iosifescu}, On the Gauss-Kuzmin-L\'{e}vy theorem, {\it I. Rev. Roumaine Math. Pures Appl.} {\bf 39}(2) (1994), 97--117.

\bibitem{iwaniec}{\sc H. Iwaniec}, {\it Spectral methods of automorphic forms}. Second edition. Graduate Studies in Mathematics, 53. American Mathematical Society, Providence, RI; Revista Matem\'{a}tica Iberoamericana, Madrid, 2002.


\bibitem{khinchin}{\sc A. Ya. Khinchin}, {\it Continued fractions}, The
    University of Chicago Press, 1964.

\bibitem{knuth}{\sc D.E. Knuth}, {\it The art of computer programming}, 2nd ed., vol 2: Seminumerical algorithms, Addison-Wesley, 1981.

\bibitem{konzag}{\sc M. Kontsevich, D. Zagier}, Periods,
{\it Mathematics unlimited - 2001 and beyond}, 771-808, Springer, Berlin, 2001.

\bibitem{zagier2}{\sc J. Lewis, D. Zagier}, Period functions and the Selberg zeta function for the modular group, {\it The mathematical beauty of physics (Saclay, 1996)},  83--97, Adv. Ser. Math. Phys., 24, {\it World Sci. Publ., River Edge, NJ}, 1997.

\bibitem{lip}{\sc L. Lipshitz, A. J. van der Poorten}, Rational functions, diagonals, automata and arithmetic, {\it Number theory (Banff, AB, 1988)}, 339--358, de Gruyter, Berlin, 1990.

\bibitem{zagier3}{\sc J. Lewis, D. Zagier}, Period functions for Maass wave forms. I, {\it Ann. of Math. (2)} {\bf 153} (2001), no. 1, 191--258.

\bibitem{macleod}{\sc A. J. MacLeod}, High-accuracy numerical values in the Gauss-Kuz'min continued fraction problem,
{\it Comput. Math. Appl.} {\bf 26} (3) (1993), 37--44.

\bibitem{mayer}{\sc D. Mayer}, On a $\zeta$ function related to the continued fraction transformation, {\it Bull. Soc. Math. France} {\bf 104} (2) (1976), 195--203. 

\bibitem{mayer1}{\sc D. Mayer, G. Roepstorff}, On the relaxation time of Gauss's continued-fraction map. I. The Hilbert space approach (Koopmanism),
{\it J. Statist. Phys.} {\bf 47} (1-2) (1987), 149--171.

\bibitem{mayer2}{\sc D. Mayer, G. Roepstorff}, On the relaxation time of Gauss' continued-fraction map. II. The Banach space approach (transfer operator method), {\it J. Statist. Phys.} {\bf 50} (1-2) (1988), 331--344.

\bibitem{mayer3}{\sc D. Mayer}, The thermodynamic formalism approach to Selberg's zeta function for ${\rm PSL}(2,\mathbb{Z})$, {\it  Bull. Amer. Math. Soc. (N.S.)} {\bf 25} (1) (1991), 55--60.

\bibitem{mayer4}{\sc D. Mayer}, On the thermodynamic formalism for the Gauss map, {\it Comm. Math. Phys.} {\bf 130} (2) (1990), 311--333.

\bibitem{mayer5}{\sc D. Mayer}, Continued fractions and related transformations. In: {\it Ergodic theory, symbolic dynamics, and hyperbolic spaces} (Trieste, 1989), Oxford Sci. Publ., Oxford Univ. Press, New York, 1991 (175--222). 

\bibitem{rib}{\sc J.M. Ribando}, Measuring solid angles beyond dimension three, {\it  Discrete Comput. Geom.} {\bf 36} (3) (2006), 479--487.

\bibitem{sebah}{\sc P. Sebah}, Computing eigenvalues of Wirsing's operator (2012); \url{http://mif.vu.lt/~alkauskas/MP3/eigen-sebah.txt}.

\bibitem{szego}{\sc G. Szeg\H{o}}, {\it Orthogonal polynomials}. Fourth edition. American Mathematical Society, Colloquium Publications, Vol. XXIII. American Mathematical Society, Providence, R.I. (1975). Russian translation: 1962, Moscow. Formulas and paragraph numbers cited in the paper refer to the Russian translation.

\bibitem{wirsing} {\sc E. Wirsing}, On the theorem of Gauss-Kusmin-L\'{e}vy and a Frobenius-type theorem for function spaces, {\it Acta Arith.} {\bf 24} (1973/74), 507--528.

\bibitem{zagier1} {\sc D. Zagier} (2001), New points of view on the Selberg zeta function, Proceedings of the Japanese-German Seminar ``Explicit Structures of Modular Forms and Zeta Functions", Ryushi-do (2002) \url{http://people.mpim-bonn.mpg.de/zagier/files/tex/NewPointsSelbergZeta/fulltext.pdf}.

\bibitem{inverse}The Inverse Symbolic Calculator, available at: \url{https://isc.carma.newcastle.edu.au/}. 

\bibitem{oeis} The On-line Encyclopedia of Integer Sequences, \url{https://oeis.org/}, sequence A084772.

\end{thebibliography}
\end{document}